\newtheorem{thm}{Theorem}[section] 
\newtheorem*{thm*}{Theorem} 
\newtheorem{prop}[thm]{Proposition}
\newtheorem{lem}[thm]{Lemma}
\newtheorem{cor}[thm]{Corollary}
\theoremstyle{definition}
\newtheorem{definition}[thm]{Definition}
\newtheorem{expl}[thm]{Example}
\newtheorem{question}[thm]{Question}
\newtheorem{rem}[thm]{Remark}
\DeclareMathOperator{\C}{\mathbb{C}}
\DeclareMathOperator{\Z}{\mathbb{Z}}
\DeclareMathOperator{\N}{\mathbb{N}}
\DeclareMathOperator{\F}{\mathbb{F}}
\DeclareMathOperator{\Hom}{{\rm Hom}}
\DeclareMathOperator{\Div}{{\rm Div}}
\DeclareMathOperator{\Tr}{{\rm Tr}}
    \DeclareFontFamily{U}{wncy}{}
    \DeclareFontShape{U}{wncy}{m}{n}{<->wncyr10}{}
    \DeclareSymbolFont{mcy}{U}{wncy}{m}{n}
    \DeclareMathSymbol{\Sha}{\mathord}{mcy}{"58}
\numberwithin{equation}{section}
\newcommand{\lcm}{\textrm{lcm}}
\DeclareSymbolFont{bbold}{U}{bbold}{m}{n}
\DeclareSymbolFontAlphabet{\mathbbold}{bbold}
\newcommand{\s}{\text{ss}}
\newcommand{\rad}{{\rm rad}}
\begin{document}
\sloppy

\title{On the gcd-graphs over polynomial rings }
 \author{ J\'an Min\'a\v{c}, Tung T. Nguyen, Nguy$\tilde{\text{\^{e}}}$n Duy T\^{a}n }
\address{Department of Mathematics, Western University, London, Ontario, Canada N6A 5B7}
\email{minac@uwo.ca}
\date{\today}

 \address{Department of Mathematics and Computer Science, Lake Forest College, Lake Forest, Illinois, USA}
 \email{tnguyen@lakeforest.edu}
 
  \address{
Faculty Mathematics and 	Informatics, Hanoi University of Science and Technology, 1 Dai Co Viet Road, Hanoi, Vietnam } 
\email{tan.nguyenduy@hust.edu.vn}

\thanks{JM is partially supported by the Natural Sciences and Engineering Research Council of Canada (NSERC) grant R0370A01. He gratefully acknowledges the Western University Faculty of Science Distinguished Professorship 2020-2021. TTN is partially supported by an AMS-Simons Travel Grant. Parts of this work were done while he was a postdoc associate at Western University. He thanks Western University for their hospitality. NDT is partially supported by the Vietnam National
Foundation for Science and Technology Development (NAFOSTED) under grant number 101.04-2023.21}
\keywords{Gcd-graphs, Cayley graphs, prime graphs, perfect graphs, connected graphs, bipartite graphs, Ramanujan sums,  integral spectra}
\subjclass[2020]{Primary 05C25, 05C50, 05C51}

\begin{abstract}
Gcd-graphs over the ring of integers modulo $n$ are a natural generalization of unitary Cayley graphs. The study of these graphs has foundations in various mathematical fields, including number theory, ring theory, and representation theory. Using the theory of Ramanujan sums, it is known that these gcd-graphs have integral spectra; i.e., all their eigenvalues are integers. In this work, inspired by the analogy between number fields and function fields, we define and study gcd-graphs over polynomial rings with coefficients in finite fields. We establish some fundamental properties of these graphs, emphasizing their analogy to their counterparts over $\Z.$
\end{abstract}

\maketitle

\tableofcontents

\section{Introduction}
Let $n$ be a positive integer.  The unitary Cayley graph on the ring of integers modulo $n$ is defined as the Cayley graph $G_n = {\rm Cay}(\Z/n\Z, U_n)$ where $\Z/n\Z$ is the ring of integers modulo $n$ and $U_n = (\Z/n\Z)^{\times}$ its unit group. More specifically, $G_n$ is equipped with the following data:
\begin{enumerate}
    \item The vertex set of $G_n$ is $\Z/n\Z$;
    \item Two vertices $a, b \in V(G_n)$ are adjacent if and only if $a-b \in U_n$. 
\end{enumerate}
The unitary Cayley graph $G_n$ was first formally introduced in \cite{klotz2007some} even though we can trace it back to the work of Evans and Erdős \cite{erdos1989representations}. Due to its elegance and simplicity, the unitary Cayley graph has been further studied and generalized by various works in the literature. For example, \cite{unitary} studies the unitary Cayley graph of a finite commutative ring. In \cite{kiani2012unitary}, the authors generalize this study further to finite rings which are not necessarily commutative. 
In \cite{nguyen2024certain}, a subset of the authors studies various arithmetic and graph-theoretic properties of a subclass of $k$-unitary Cayley graphs as defined in \cite{podesta2021_finitefield, podesta2019spectral} (when $k=1$, these graphs are precisely unitary Cayley graphs). We refer the interested readers to  \cite{bavsic2015polynomials, chudnovsky2024prime, klotz2007some, nguyen2024certain} and the references therein for some further topics in this line of research.

As explained in \cite[Section 4]{klotz2007some}, a particularly intriguing arithmetical property of $G_n$ is that its spectrum can be described by the theory of circulant matrices and Ramanujan sums. A consequence of this fact is that the unitary Cayley graph has an integral spectrum; i.e., all of its eigenvalues are integers. 
In \cite{klotz2007some}, the authors note that the unitary Cayley graph is not the sole graph exhibiting an integral spectrum. They identify a closely related family of graphs sharing this characteristic, known as gcd-graphs which we now recall. Let $D =\{d_1, d_2, \ldots, d_k \}$ be a set of proper divisors of $n$. The gcd-graph $G_n(D)$ is the graph with the following data. 
\begin{enumerate}
    \item The vertex set of $G_n(D)$ is $\Z/n\Z.$
    \item Two vertices $a,b \in G_n$ are adjacent if and only if $\gcd(a-b, n) \in D.$
\end{enumerate}
We remark that the unitary Cayley graph $G_n$ is nothing but $G_{n}(\{1\}).$
We also note that  the gcd-graph $G_n(D)$  is loopless since $D$ does not contain $n$. Additionally, the graph $G_n(D)$ is undirected since  $\gcd(a-b,n)=\gcd(b-a,n)$. It is known that the spectrum of $G_n(D)$ is a summation of various Ramanujan sums (see \cite[Section 4]{klotz2007some}). In particular, all of its eigenvalues are integers. It turns out that, the converse is also true. 

\begin{thm}[{See \cite[Theorem 7.1]{so2006integral}}]
    An $\Z/n\Z$-circulant graph $G$ is an integral graph if and only if $G = G_n(D)$ for some $D.$
\end{thm}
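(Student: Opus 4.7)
The plan is to use the character theory of $\Z/n$ together with a Galois-theoretic argument. A $\Z/n$-circulant graph $G$ is a Cayley graph $\mathrm{Cay}(\Z/n,S)$ for some symmetric set $S\subseteq\Z/n\setminus\{0\}$, and its eigenvalues are $\lambda_j=\sum_{s\in S}\zeta_n^{js}$ for $j=0,1,\ldots,n-1$, where $\zeta_n=e^{2\pi \i/n}$. The backward direction is immediate from what the excerpt already recalls: if $S=\bigcup_{d\in D}\{a\in\Z/n:\gcd(a,n)=d\}$, then each $\lambda_j$ is a sum of Ramanujan sums and hence a rational integer. So the substance lies in the forward direction.

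For the forward direction, suppose every $\lambda_j\in\Z$. Since $\lambda_j\in\Q(\zeta_n)\cap\Q$, it is fixed by every element of $\Gal(\Q(\zeta_n)/\Q)\cong(\Z/n)^\times$. I would apply the automorphism $\sigma_k:\zeta_n\mapsto\zeta_n^k$ for an arbitrary $k\in(\Z/n)^\times$ to obtain
\[
\lambda_j \;=\; \sigma_k(\lambda_j) \;=\; \sum_{s\in S}\zeta_n^{jks} \;=\; \sum_{t\in kS}\zeta_n^{jt},
\]
so the two functions $\mathbf{1}_S$ and $\mathbf{1}_{kS}$ on $\Z/n$ have identical discrete Fourier transforms. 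By Fourier inversion on the finite abelian group $\Z/n$, this forces $kS=S$ for every $k\in(\Z/n)^\times$. Thus $S$ is a union of orbits of the multiplication action of $(\Z/n)^\times$ on $\Z/n$.

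Finally, I would identify those orbits. Two elements $a,b\in\Z/n$ lie in the same $(\Z/n)^\times$-orbit if and only if $\gcd(a,n)=\gcd(b,n)$, because the orbit of $a$ is precisely $\{ua\bmod n:u\in(\Z/n)^\times\}=\{c\in\Z/n:\gcd(c,n)=\gcd(a,n)\}$. Hence the orbits are exactly the gcd-classes $S_d=\{a\in\Z/n:\gcd(a,n)=d\}$ indexed by divisors $d\mid n$, and writing $D$ for the set of proper divisors $d$ with $S_d\subseteq S$ gives $G=G_n(D)$.

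The only step that requires any real care is the passage from Galois-invariance of each eigenvalue to multiplicative invariance of the connection set; I expect this to be the main (though not deep) obstacle, and the cleanest way to carry it out is via the orthogonality of characters on $\Z/n$, which immediately upgrades the equality of Fourier transforms to equality of indicator functions.
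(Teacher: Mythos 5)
Your proof is correct. Be aware, though, that the paper does not prove this statement at all: it is quoted as background with a citation to So, so there is no internal argument to compare yours against. What you give is the standard proof --- integrality of each $\lambda_j$ forces $\lambda_j$ to be fixed by every $\sigma_k\in\Gal(\Q(\zeta_n)/\Q)$, hence $\mathbf{1}_S$ and $\mathbf{1}_{kS}$ have the same Fourier transform, hence $kS=S$ by linear independence of the characters of $\Z/n$, and the $(\Z/n)^\times$-orbits are exactly the gcd-classes $S_d$ --- and each step, including the surjectivity of $(\Z/n)^\times\to(\Z/(n/d))^\times$ implicit in the orbit identification, is sound. The only cosmetic remark is that $0\notin S$ is what guarantees $D$ consists of \emph{proper} divisors, which is worth saying explicitly since the paper's definition of $G_n(D)$ requires it.
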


Let $\F_q$ be a finite field. As observed by Artin in \cite{artin2} and Andr\'e Weil in \cite{weil1939analogie}, there is a strong analogy between the ring  $\Z$ of integers and the ring $\F_q[x]$ of polynomials with coefficients in $\F_q$ (and more generally, between number fields and function fields). Consequently, it is of reasonable interest to define and investigate gcd-graphs over $\mathbb{F}_q[x]$. Fortunately, the analogous definition of gcd-graphs over $\mathbb{F}_q[x]$ is relatively straightforward.  More specifically, let $f \in \F_q[x]$ be a non-zero element in $\F_q[x]$. Let $D = \{f_1, f_2, \ldots, f_{k} \}$ be a subset of the set of divisors  $\Div(f)$ of $f$; i.e, $f_i \mid f$ for all $1 \leq i \leq k.$ Let 
\[ S_{D} = \{g \in \F_q[x]/(f) : \gcd(g,f) \in D \}. \] 
\begin{rem}
   In general, the greatest common divisor is only defined up to associates. In our case, however, we can make a canonical choice for the great common divisor by requiring it to be a monic polynomial. Therefore, unless we explicitly state the contrary, we will assume throughout this article that all involved polynomials are monic. 
\end{rem}
\begin{definition}
     The \emph{gcd-graph} $G_{f}(D)$ is the Cayley graph $\Gamma(\F_q[x]/(f), S_{D}).$ More precisely, it is the graph equipped with the following data:
\begin{enumerate}
    \item The vertex set of $G_{f}(D)$ is the finite ring $\F_q[x]/(f)$,
    \item Two vertices $u, v$ are adjacent if and only if $u-v \in S_{D}$. In other words, ${\gcd(u-v, f) \in D}.$  
\end{enumerate}
\end{definition}

 Similar to the case of gcd-graphs over $\Z$, $G_f(D)$ is undirected and loopless. Furthermore, $G_f(D)$ is $|S_D|$-regular. The graph $G_f(\{1\})$, which is just the unitary Cayley graph over {$\F_q[x]/(f)$}, will play an important role in study of the connectedness and bipartiteness of general gcd-graphs over $\F_q[x]$. By \cite[Lemma 4.33]{chudnovsky2024prime}, $G_f(\{1\})$ is not connected if and only if $\F_q=\F_2$ and $x(x+1)\mid f$.

\begin{expl}
     \cref{fig:z_25} shows the graph $G_{f}(D)$ where $f = x(x+1) \in \F_{3}[x]$ and ${D = \{x, x+1 \}}$. {We have $S_D=\{x, 2x, x+1, 2(x+1)\}$. The graph $G_{f}(D)$}
     is a regular graph of degree $4.$
\begin{figure}[H]
\centering
\includegraphics[width=0.4 \linewidth]{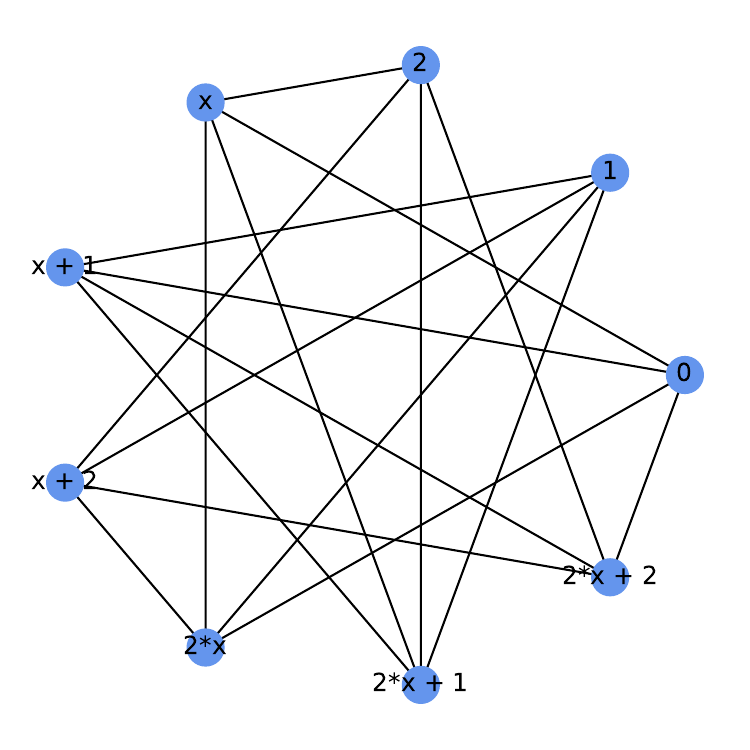}
\caption{The gcd-graph $G_{x(x+1)}(\{x, x+1 \})$}
\label{fig:z_25}
\end{figure}
\end{expl}

\subsection*{Outline}
In this article we introduce and study some foundational properties of gcd-graphs defined over \(\mathbb{F}_q[x]\). In \cref{sec:background}, we recall some standard definitions in graph theory that will be used throughout the article. \cref{sec:connectedness} investigates the question of when a gcd-graph is connected and anti-connected. (Recall that a graph is anti-connected if its complement is connected.) In \cref{sec:bipartite}, we provide the necessary and sufficient conditions for a gcd-graph to be bipartite. We note that our investigation provides additional insights into the properties of Cayley graphs. For example, in  recent works \cite{podesta2021_finitefield, podesta2025GP}, the authors examine the connectedness and bipartiteness of a related class of graphs known as generalized Paley graphs.  \cref{sec:prime} explores another graph-theoretic property of gcd-graphs: their primeness property. Here, we describe the necessary and sufficient conditions for a gcd-graph to be prime. We then apply this result to give an explicit criterion for a gcd-graph to be prime under some mild assumptions on the set $D$. \cref{sec:spec} examines the spectra of gcd-graphs using symmetric algebras and Ramanujan sums.  We find it quite remarkable that the explicit formulas for these spectra are identical to those that appeared in the number field case. In \cref{sec:perfect}, we provide some sufficient conditions for gcd-graphs to be perfect. Finally, in \cref{sec:induced}, we investigate whether a given graph can be realized as an induced subgraph of a gcd-graph. As a by-product, we prove an analogous result to a theorem of Erdős and Evans in the number field case. 

\subsection*{Code}
The code that we develop to generate gcd-graphs and do experiments on them can be found at \cite{Nguyen_gcd_graph}. 

\section{Background from graph theory} \label{sec:background}
In this section, we recall some basic concepts in graph theory that we will use throughout this article. 
\begin{definition}[The complete graph $K_n$]
The \emph{complete graph} $K_n$ is the graph on $n$ vertices which are pairwisely adjacent. 
\end{definition}
We now review two types of graph products that appear in our work, starting with the tensor product.

\begin{definition}[Tensor product of graphs] \label{def:tensor_product}
    Let $G,H$ be two graphs. The \emph{tensor product} $G\times H$ of $G$ and $H$ (also known as the direct product) is the graph with the following data:
    \begin{enumerate}
    \item The vertex set of $G \times H$ is the Cartesian product $V(G) \times V(H)$, 
    \item Two vertices $(g,h)$ and $(g',h')$ are adjacent in $G \times H$ if and only if $(g,g') \in E(G)$ and $(h, h') \in E(H).$
\end{enumerate}
\end{definition}
Next, we recall the definition of the wreath product of graphs.
\begin{definition}[Wreath product of graphs] \label{def:wreath_product}
  Let $G, H$ be two graphs. We define the {\emph{wreath product}}  of $G $ and $H$ as the graph $G * H$  with the following data:
  \begin{enumerate}
      \item The vertex set of $G * H$ is the Cartesian product $V(G ) \times V(H)$,
      \item $(g,h)$ and $(g',h')$ are adjacent in $G * H$ if either $(g,g') \in E(G)$ or $g=g'$ and $(h,h') \in E(H)$. 
  \end{enumerate}
\end{definition}

\begin{definition}[Graph morphisms] \label{def:graph_morphism}
 Let $G_1$ and $G_2$ be two graphs. We define a {\emph{graph morphism}} between $G_1$ and $G_2$ to be a map $f$ from $V(G_1)$ to $V(G_2)$ that preserves edges. More precisely, if $u,v \in V(G_1)$ are adjacent in $G_1$, then $f(u),f(v)$ are adjacent in $G_2$.
\end{definition}

Finally, we  recall the definition of induced subgraphs.
\begin{definition}[Induced subgraphs]
 Let $G_1$ and $G_2$ be two graphs. We say that $G_1$ is an {\emph{induced subgraph}} of $G_2$ if there exists a graph morphism $f\colon G_1 \to G_2$ such that the following conditions hold. 
 \begin{enumerate}
     \item The map $f\colon V(G_1) \to V(G_2)$ is injective. 
     \item For $u,v \in G_1$, $u,v$ are adjacent if and only if $f(u)$ and $f(v)$ are adjacent in $G_2.$
 \end{enumerate}
    
\end{definition}

\section{Connectedness of gcd-graphs} \label{sec:connectedness}

In this section, we will give the necessary and sufficient conditions for $G_{f}(D)$ to be connected. Unlike the number field case where these conditions are rather straightforward, the function field case is a bit more complicated. This is partially due to the fact that the unitary Cayley graph on $\F_q[x]/(f)$ is not always connected. We start our discussion with the following simple observation. 
\begin{lem}  \label{lem:connected}
If $G_f(D)$ is connected then $\gcd(f_1,\ldots,f_k)=1$.
    \end{lem}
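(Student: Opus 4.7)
The plan is to exploit the fact that every edge in $G_f(D)$ imposes a divisibility constraint on the difference of its endpoints, then chain these constraints along a path to derive that $d := \gcd(f_1, \ldots, f_k)$ must divide a trivial difference such as $1 - 0$, forcing $d = 1$.

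First, I would observe that for any edge $u \sim v$ of $G_f(D)$, the definition gives $\gcd(u - v, f) = f_i$ for some index $i$, which forces $f_i \mid u - v$ in $\F_q[x]$ (using any lift of $u - v$; the statement is independent of the lift because $f_i \mid f$). Since $d$ divides each $f_i$, we immediately conclude $d \mid u - v$. This divisibility descends to a well-defined condition on $\F_q[x]/f$ because $d \mid f$.

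Next, I would extend this edgewise statement to paths. If $u = u_0 \sim u_1 \sim \cdots \sim u_n = v$ is a path in $G_f(D)$, then $d$ divides each consecutive difference $u_{j+1} - u_j$, and hence divides the telescoping sum $v - u$. Under the hypothesis that $G_f(D)$ is connected, this applies to every pair of vertices. Taking the specific pair $u = 0$, $v = 1$, we get $d \mid 1$, and since $d$ is monic this forces $d = 1$, as claimed.

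I do not anticipate any serious obstacle — the argument is essentially a routine propagation of a local constraint along paths. The only mild point requiring care is that the divisibility $f_i \mid u - v$ should be read in $\F_q[x]$ via an arbitrary lift, which is legitimate precisely because $f_i \mid f$; this is what makes the subsequent chaining along a path meaningful modulo $f$.
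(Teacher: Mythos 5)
Your proposal is correct and follows essentially the same route as the paper: observe that $d=\gcd(f_1,\ldots,f_k)$ divides the difference of any two adjacent vertices, telescope along a path from $0$ to $1$, and conclude $d\mid 1$. The extra care you take about reading the divisibility via lifts is a fine (and harmless) elaboration of the same argument.
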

    \begin{proof}
        Let $f_0=\gcd(f_1,\ldots,f_k)$. We observe that if two vertices $u$ and $v$ are adjacent then  $f_0$ divides $u-v$. 
        Now since  $G_f(D)$ is connected, there is a path connecting $0$ and $1$. This implies that $f_0$ divides $1-0=1$. Hence $\gcd(f_1,\ldots,f_k)=1$.
    \end{proof}
In the case of $\Z$, the converse of \cref{lem:connected} is true as well; i.e. the gcd-graph $G_{n}(D)$, where $D=\{d_1, d_2, \ldots, d_k \}$ is connected if and only if ${\rm gcd}(d_1, d_2, \ldots, d_{k})=1.$ In the case of $\F_q[x]$, this condition is not sufficient. For example, let $D=\{1 \}$, $\F_q=\F_2$, $f= x(x+1).$ Then $\F_q[x]/(f) \cong \F_2 \times \F_2.$ In this case 
\[ G_{f}(\{1\}) \cong K_2 \times K_2, \]
which is not connected (in general, this is the only obstruction where the unitary graph over a commutative ring fails to be connected, see \cite{chudnovsky2024prime} and \cref{prop:not_connected})). In the case of the gcd-graphs, we have the following result. 

\begin{thm} \label{prop:connected}
    $G_{f}(D)$ is connected if the following conditions hold.
    \begin{enumerate}
        \item $\gcd(f_1, f_2, \ldots, f_k)=1$.
        \item The unitary Cayley graph $G_{f}(\{1\})$ is connected. 
    \end{enumerate}
\end{thm}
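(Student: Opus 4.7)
The plan is to recast connectedness as an additive-generation statement and then assemble it from the two hypotheses via the ideals generated by the $f_i$'s. Since $G_f(D) = {\rm Cay}(\F_q[x]/f, S_D)$ with $S_D$ closed under negation (because $\gcd(-g,f)=\gcd(g,f)$), connectedness is equivalent to the assertion that the additive subgroup $B := \langle S_D \rangle \subseteq A := \F_q[x]/f$ is all of $A$. So the goal becomes: show $B = A$.

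First I would make the elementary observation that each $f_i$ itself lies in $S_D$: indeed $\gcd(f_i, f) = f_i \in D$ because $f_i \mid f$. More generally, for any unit $u \in U_f$, the computation $\gcd(f_i u, f) = f_i \cdot \gcd(u, f/f_i) = f_i$ shows that $f_i \cdot U_f \subseteq S_D$. This is the key bridge between the units of $A$ and the set $S_D$, and it is where condition (2) will enter.

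Next I would use hypothesis (2). The connectedness of the unitary Cayley graph $G_f(\{1\})$ is precisely the statement that $U_f$ additively generates $A$, i.e., $\langle U_f \rangle_{\mathrm{add}} = A$. Multiplying through by $f_i$, I get $f_i A = f_i \langle U_f \rangle \subseteq \langle f_i U_f \rangle \subseteq B$. Thus each principal ideal $f_i A$ sits inside $B$.

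Finally, hypothesis (1) handles the gluing: since $\gcd(f_1, \ldots, f_k) = 1$ in the PID $\F_q[x]$, Bezout gives $\sum_i a_i f_i = 1$ for some $a_i \in \F_q[x]$, so $1 \in \sum_i f_i A$ and hence $\sum_i f_i A = A$ as ideals. Combining with the previous step, $A = \sum_i f_i A \subseteq B$, so $B = A$ and $G_f(D)$ is connected. The proof is essentially frictionless once the setup is in place; the only mildly subtle point — which is really the conceptual heart — is the step $f_i U_f \subseteq S_D$, because it is what allows the two hypotheses, one about units and one about the divisor set $D$, to be linked through the single ideal $f_i A$.
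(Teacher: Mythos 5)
Your proof is correct and follows essentially the same route as the paper's: both arguments reduce connectedness to showing $\langle S_D\rangle = \F_q[x]/f$, use hypothesis (2) to express elements as integer combinations of units, and hinge on the observation that $\gcd(uf_i,f)=f_i$ for $u$ a unit, with Bezout from hypothesis (1) doing the final gluing. Your reorganization into the statement $f_iA\subseteq\langle S_D\rangle$ followed by $\sum_i f_iA=A$ is just a cleaner packaging of the same computation.
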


\begin{proof} 
   Let $R= \F_q[x]/(f)$. We need to show that $R= \langle S_{D} \rangle$ the abelian group generated by $S_{D}$. Let $ a \in R.$ Because $\gcd(f_1, f_2, \ldots, f_k)=1$, we can find $a_1, a_2, \ldots, a_k \in R$ such that 
    \[ a = \sum_{i=1}^k a_i f_i.\]
    Since $G_{f}(\{1 \})$ is connected, for each $1 \leq i \leq k$, we can find write 
    \[ a_i = \sum_{j=1}^{n_i} m_{ij} s_{ij}, \]
    where $m_{ij} \in \Z$ and $s_{ij} \in R^{\times}.$ Consequently, we can write 
    \[ a = \sum_{i=1}^k \sum_{j=1}^{n_i} m_{ij} s_{ij} f_i. \]
    Since $f_i\mid f$, we see that $\gcd(s_{ij} f_i, f)= f_i \in D$. This shows that $a \in \langle S_{D} \rangle$. Since this is true for all $a$, we conclude that $R= \langle S_{D} \rangle$.
\end{proof}
We can classify all $f$ such that $G_{f}(\{1\})$ is not connected. 
\begin{prop}[{See \cite[Lemma 4.33]{chudnovsky2024prime}}] \label{prop:not_connected}
    $G_{f}(\{1 \})$ is not connected if and only if $\F_q = \F_2$ and $x(x+1) \mid f.$
\end{prop}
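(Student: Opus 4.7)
The plan is to interpret connectedness of $G_f(\{1\})$ as the condition that the unit group $R^\times$ generates $R := \F_q[x]/f$ as an additive group, and then analyze this purely ring-theoretically. By the Chinese Remainder Theorem, if $f = \prod_{i=1}^s p_i^{e_i}$ is the factorization into distinct monic irreducibles, then $R \cong \prod_i R_i$ with $R_i := \F_q[x]/p_i^{e_i}$ a local ring whose residue field $k_i := \F_q[x]/p_i$ has size $q^{\deg p_i}$. The Jacobson radical $J$ of $R$ satisfies $R/J \cong \prod_i k_i$. A first observation is that $J \subseteq \langle R^\times\rangle$, because for any unit $u$ and any $j \in J$, the element $u+j$ is still a unit (it is nonzero modulo each $p_i$), so $j = (u+j) - u \in \langle R^\times\rangle$. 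It follows that $\langle R^\times\rangle = R$ if and only if the image of $R^\times$ in $R/J$ generates $R/J$ additively; and since $R_i^\times \twoheadrightarrow k_i^\times$ is surjective, this image is exactly $\prod_i k_i^\times$.

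The question thus reduces to: when does $\prod_i k_i^\times$ generate $\prod_i k_i$ additively? I would organize this into three cases. (i) If every $|k_i| \geq 3$, then for each $a \in k_i$ one can choose $b \in k_i \setminus \{0,a\}$ and write $a = b + (a-b)$ as a sum of two nonzero elements; doing this componentwise expresses any element of $\prod_i k_i$ as a sum of two units. (ii) If exactly one $k_i$, say $k_1$, equals $\F_2$, then any element $(a_1,\ldots,a_s)$ can be written as a sum of either two units (when $a_1 = 0$, using $1+1=0$ in $\F_2$) or three units (when $a_1 = 1$, using $1+1+1=1$), with the remaining coordinates handled as in (i). (iii) If two of the $k_i$, say $k_1$ and $k_2$, both equal $\F_2$, then every unit of $\prod_i k_i$ has first two coordinates $(1,1)$, so every sum of units has equal first two coordinates, and vectors like $(1,0,\ldots)$ are unreachable. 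Hence $\langle R^\times\rangle = R$ precisely when at most one of the $k_i$ is $\F_2$. Since $|k_i| = q^{\deg p_i} = 2$ forces $q = 2$ and $\deg p_i = 1$, and the only monic linear polynomials in $\F_2[x]$ are $x$ and $x+1$, the failure of connectedness is equivalent to $\F_q = \F_2$ together with $x(x+1) \mid f$.

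The main obstacle I anticipate is handling case (ii) cleanly: one might worry that any coordinate with residue field $\F_2$ obstructs generation, since its only unit is $1$. The resolution, and the conceptual point of the argument, is that using sums of different parity-length in that single coordinate recovers both values $0$ and $1$, while the other coordinates have enough units to absorb whichever number of summands is forced upon them. Everything else in the proof is a matter of bookkeeping with the CRT decomposition and the standard lifting of units from the residue field.
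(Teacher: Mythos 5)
Your argument is correct. Note, however, that the paper itself offers no proof of this proposition: it is imported wholesale as a citation to \cite[Lemma 4.33]{chudnovsky2024prime}, so there is nothing internal to compare your write-up against line by line. What you have produced is a self-contained proof along the standard lines that the cited reference (and this paper, elsewhere) also uses: reduce to the statement that $\langle R^\times\rangle = R$ additively, pass to the semisimplification $R/J$ via the observation that a unit plus a radical element is a unit (the paper invokes exactly this reduction, via \cite[Proposition 4.30]{chudnovsky2024prime}, in the proof of its Theorem~\ref{thm:connected_hard}), and then analyze sums of units in a product of fields, where the only obstruction is having two factors isomorphic to $\F_2$. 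Your three-case analysis is sound: the parity trick in case (ii) correctly handles a single $\F_2$ factor, the splitting $a = b + (a-b)$ with $b \notin \{0,a\}$ correctly uses $|k_i|\ge 3$, and the parity obstruction in case (iii) is exactly why $x(x+1)\mid f$ over $\F_2$ kills connectedness. The final translation ($|k_i|=2$ forces $q=2$ and $\deg p_i = 1$, and $x$, $x+1$ are the only monic linear polynomials over $\F_2$) is also right. In short: the proposal supplies a complete and correct proof of a statement the paper merely cites, and the method agrees with the one the paper relies on implicitly.
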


By \cref{prop:connected} and \cref{prop:not_connected}, we have the following corollary. 

\begin{cor} \label{cor:equiv_conneted}
    Suppose that one of the following conditions holds. 
    \begin{enumerate}
        \item $\F_q \neq \F_2$.
        \item $\F_q =\F_2$ and $x(x+1) \nmid f.$
    \end{enumerate}
Then $G_{f}(D)$ is connected if and only if $\gcd(f_1, f_2, \ldots, f_k)=1.$
\end{cor}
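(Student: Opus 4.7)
The plan is to derive the corollary by combining the forward implication (already essentially handled by \cref{lem:connected}) with the backward implication, which will follow from stitching together \cref{prop:connected} and \cref{prop:not_connected}.

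First, for the ``only if'' direction, I would simply invoke \cref{lem:connected} verbatim: the connectedness of $G_f(D)$ forces $\gcd(f_1,\ldots,f_k)=1$, with no hypothesis on $\F_q$ or $f$ needed. Note that this half does not use either assumption (1) or (2).

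For the ``if'' direction, assume $\gcd(f_1,\ldots,f_k)=1$. The plan is to verify the second hypothesis of \cref{prop:connected}, namely that the unitary Cayley graph $G_f(\{1\})$ is connected. This is precisely where the dichotomy on $\F_q$ enters: by the contrapositive of \cref{prop:not_connected}, $G_f(\{1\})$ fails to be connected exactly when $\F_q=\F_2$ and $x(x+1)\mid f$. Our hypothesis rules out both of these situations simultaneously --- case (1) rules out $\F_q=\F_2$ outright, and case (2) allows $\F_q=\F_2$ but excludes the divisibility condition $x(x+1)\mid f$. In either case we conclude that $G_f(\{1\})$ is connected. Now applying \cref{prop:connected} with both hypotheses verified yields that $G_f(D)$ is connected.

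There is essentially no obstacle here; the work has been done in the preceding results, and the corollary only packages them. The one subtle point worth flagging in the write-up is the logical structure of the dichotomy --- one should be careful to observe that conditions (1) and (2) together are precisely the negation of the ``bad'' hypothesis ``$\F_q=\F_2$ and $x(x+1)\mid f$'' from \cref{prop:not_connected}, so that the contrapositive is applied cleanly. Beyond this observation, the proof is a one-line citation in each direction.
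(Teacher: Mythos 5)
Your proposal is correct and follows exactly the route the paper intends: the paper states the corollary with the single line ``By \cref{prop:connected} and \cref{prop:not_connected}'' and leaves the assembly implicit, while you have spelled out that the forward direction is \cref{lem:connected} and the backward direction verifies the hypotheses of \cref{prop:connected} via the contrapositive of \cref{prop:not_connected}. Nothing is missing.
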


We now deal with the case where $G_{f}(\{1\})$ is not connected. By \cref{prop:not_connected}, this implies that $\F_q = \F_2$ and $x(x+1) \mid f.$

\begin{lem} \label{lem:gcd_lem}
    Let $g$ be a divisor of $f$. Then for every polynomial $h \in \F_q[x]$ 
    \[ \gcd(h, g) = \gcd(\gcd(h, f), g).\]
\end{lem}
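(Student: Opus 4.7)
The plan is to establish the equality of the two greatest common divisors by a standard mutual-divisibility argument, using that in $\F_q[x]$ monic gcds are unique, so two monic polynomials dividing each other must coincide. Set $d_1 = \gcd(h,g)$ and $d_2 = \gcd(\gcd(h,f), g)$; both are monic by the convention fixed in the paper.

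First I would show $d_1 \mid d_2$. By definition $d_1 \mid h$ and $d_1 \mid g$. Since $g \mid f$ by hypothesis, transitivity of divisibility gives $d_1 \mid f$, so $d_1$ is a common divisor of $h$ and $f$, whence $d_1 \mid \gcd(h,f)$. Combined with $d_1 \mid g$, this makes $d_1$ a common divisor of $\gcd(h,f)$ and $g$, so $d_1 \mid d_2$.

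Conversely, to show $d_2 \mid d_1$, I would argue: $d_2 \mid \gcd(h,f)$, and $\gcd(h,f) \mid h$, so $d_2 \mid h$. Also $d_2 \mid g$ directly from the definition of $d_2$. Therefore $d_2$ is a common divisor of $h$ and $g$, giving $d_2 \mid \gcd(h,g) = d_1$. Combining the two divisibilities and monicity yields $d_1 = d_2$, which is the claim.

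I do not anticipate any real obstacle here; the only thing to be careful about is the appeal to the hypothesis $g \mid f$, which is essential for the first direction (without it, $d_1$ need not divide $\gcd(h,f)$). The identity holds in any gcd domain, and the argument is purely formal.
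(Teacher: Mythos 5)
Your proof is correct and follows essentially the same mutual-divisibility argument as the paper: one direction uses $g \mid f$ to get $\gcd(h,g) \mid \gcd(h,f)$, the other uses $\gcd(h,f) \mid h$, and monicity pins down equality. The only difference is that you spell out the role of the hypothesis $g \mid f$ slightly more explicitly than the paper does.
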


\begin{proof}
    Let $m = \gcd(h,f).$ We need to show that  $\gcd(h,g) = \gcd(m,g).$ 
    We first claim that $\gcd(h,g) \mid \gcd(m,g).$ By definition, $\gcd(h,g) \mid g.$ We also have ${\gcd(h,g) \mid \gcd(h,f)=m}.$ Therefore, $\gcd(h,g) \mid \gcd(m,g).$

    Conversely, we claim that $\gcd(m,g) \mid \gcd(h,g).$ This is clear since $m\mid h$.

    In summary, we have $\gcd(m,g) \mid \gcd(h,g)$ and $\gcd(h,g) \mid \gcd(m,g).$ This shows that $\gcd(m,g)=\gcd(h,g).$
\end{proof}

\begin{prop} \label{prop:projection_morphism}
    Let $g$ be a divisor of $f$ and $\Phi_{f,g}\colon \F_q[x]/(f) \to \F_q[x]/g$ be the canonical projection map. Let 
    \[ \overline{D} = \{\gcd(f_i,g) : 1 \leq i \leq k \}. \]
    Then  $\Phi_{f,g}(S_{D}) \subseteq S_{\overline{D}}$. Consequently, $\Phi_{f,g}\colon G_{f}(D) \to G_{g}(\overline{D})$ is a graph morphism. 
\end{prop}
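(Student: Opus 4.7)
The plan is to verify the inclusion $\Phi_{f,g}(S_D) \subseteq S_{\overline{D}}$ by a direct computation using \cref{lem:gcd_lem}, and then deduce the graph morphism statement immediately from the additivity of $\Phi_{f,g}$.

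First, I would pick an arbitrary $h \in S_D$ and unpack the definition: there exists an index $i$ with $\gcd(h,f) = f_i \in D$. The goal is to show that the reduction $\Phi_{f,g}(h) \in \F_q[x]/g$ satisfies $\gcd(\Phi_{f,g}(h), g) \in \overline{D}$. Note that $\gcd$ of a class in $\F_q[x]/g$ with $g$ is well-defined on the level of $\F_q[x]$ by choosing any lift (different lifts differ by a multiple of $g$, which does not affect the gcd with $g$); in particular, taking the lift $h$ itself, we have $\gcd(\Phi_{f,g}(h), g) = \gcd(h,g)$. Applying \cref{lem:gcd_lem} with the roles of $h,f,g$ as stated, we obtain
\[
\gcd(h,g) \;=\; \gcd(\gcd(h,f), g) \;=\; \gcd(f_i, g),
\]
and the right-hand side lies in $\overline{D}$ by definition. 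Hence $\Phi_{f,g}(h) \in S_{\overline{D}}$, establishing the desired inclusion.

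For the graph morphism claim, I would simply observe that $\Phi_{f,g}$ is a ring homomorphism, hence in particular additive: for any adjacent vertices $u,v$ in $G_f(D)$ we have $u - v \in S_D$, and thus
\[
\Phi_{f,g}(u) - \Phi_{f,g}(v) \;=\; \Phi_{f,g}(u-v) \;\in\; S_{\overline{D}}
\]
by the first part. By the definition of $G_g(\overline{D})$, this means $\Phi_{f,g}(u)$ and $\Phi_{f,g}(v)$ are adjacent (interpreting adjacency through the connection set $S_{\overline{D}}$), so $\Phi_{f,g}$ preserves edges as required by \cref{def:graph_morphism}.

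No serious obstacle is expected: the entire argument reduces to invoking \cref{lem:gcd_lem} once, plus the elementary fact that reduction mod $g$ is additive and does not alter gcd with $g$. The only subtlety one might wish to note is that $\gcd(f_i,g)$ may equal $g$ (when $g \mid f_i$), in which case the corresponding edge of $G_f(D)$ collapses to a loop in $G_g(\overline{D})$; this is harmless for the morphism statement as phrased, since the connection set $S_{\overline{D}}$ still contains the image.
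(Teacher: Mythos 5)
Your proposal is correct and follows essentially the same route as the paper's own proof: reduce to an element $h \in S_D$ with $\gcd(h,f)=f_i$ and apply \cref{lem:gcd_lem} to conclude $\gcd(h,g)=\gcd(f_i,g)\in\overline{D}$. The extra remarks on well-definedness of the gcd on residue classes and the additivity of $\Phi_{f,g}$ are fine elaborations of what the paper leaves implicit in its ``Consequently.''
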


\begin{proof}
    Suppose that $a \in S_{D}.$ Then, there exists $i$ such that $\gcd(a,f)=f_i.$ By \cref{lem:gcd_lem}, we know that 
    \[ \gcd(a,g) = \gcd(\gcd(a,f), g) = \gcd(f_i,g).\] 
    This shows that $a \in S_{\overline{D}}.$ We conclude that $\Phi_{f,g}(S_{D}) \subseteq S_{\overline{D}}$.
\end{proof}

\begin{cor} \label{cor:connected_projection}
    If $G_{f}(D)$ is connected then $G_{g}(\overline{D})$ is connected as well. 
\end{cor}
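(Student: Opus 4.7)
My plan is to deduce this from the surjectivity of $\Phi_{f,g}$ combined with Proposition \ref{prop:projection_morphism}. Since $\Phi_{f,g}\colon \F_q[x]/f \to \F_q[x]/g$ is the canonical projection of rings, it is surjective as a map of vertex sets. By Proposition \ref{prop:projection_morphism}, it is also a graph morphism from $G_f(D)$ to $G_g(\overline{D})$. The corollary therefore reduces to the standard fact that the image of a connected graph under a surjective graph morphism is connected.

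To make this explicit: given any two vertices $\bar u, \bar v \in V(G_g(\overline{D}))$, I would choose preimages $u, v \in V(G_f(D))$ using surjectivity of $\Phi_{f,g}$, and then use the hypothesis that $G_f(D)$ is connected to fix a path $u = w_0, w_1, \dots, w_n = v$ in $G_f(D)$. Applying $\Phi_{f,g}$ term by term yields a sequence $\bar u = \Phi_{f,g}(w_0), \Phi_{f,g}(w_1), \dots, \Phi_{f,g}(w_n) = \bar v$ in which consecutive entries are either equal or adjacent in $G_g(\overline{D})$ by the graph morphism property. Deleting any consecutive repetitions produces a walk from $\bar u$ to $\bar v$ in $G_g(\overline{D})$, which proves connectedness.

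The argument is essentially formal and I do not expect any genuine obstacle. The only mild subtlety is the possibility that two consecutive vertices of the lifted path collapse under $\Phi_{f,g}$, but this is harmless: collapsing a step simply shortens the resulting walk in $G_g(\overline{D})$ without affecting the conclusion.
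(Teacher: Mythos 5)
Your proof is correct and follows essentially the same route as the paper: lift $\bar u,\bar v$ through the surjection $\Phi_{f,g}$, push a connecting path in $G_f(D)$ forward via \cref{prop:projection_morphism}, and obtain a walk in $G_g(\overline{D})$. Your extra remark about collapsing consecutive vertices is a harmless refinement of a detail the paper leaves implicit.
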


\begin{proof}
    Let $\bar{a},\bar{b}$ be two vertices in $G_{g}(\overline{D}).$ Since $\Phi_{f,g}$ is surjective, we can find ${a, b \in \F_q[x]/(f)}$ such that $\Phi_{f,g}(a)=\bar{a}, \Phi_{f,g}(b) = \bar{b}.$ Since $G_{f}(D)$ is connected, there is a path $P$ from $a$ to $b.$ By \cref{prop:projection_morphism}, $\Phi_{f,g}(P)$ is a path from $\bar{a}$ to $\bar{b}.$ This shows that $G_{g}(\overline{D})$ is connected as well. 
\end{proof}

We are now ready to state our theorem in the case $G_{f}(\{1\})$ is not connected. 

\begin{thm} \label{thm:connected_hard}
    Suppose that $\F_q=\F_2$ and $x(x+1) \mid f.$ Let $$\Phi\colon \F_q[x]/(f) \to \F_q[x]/(x(x+1))$$ be the canonical projection map and $\bar{D}$ as described above. Then $G_{f}(D)$ is connected if and only if 
    \begin{enumerate}
        \item $\gcd(d_1, d_2, \ldots, d_k)=1$, and
        \item  the graph $G_{x(x+1)}(\bar{D})$ is connected. This condition is equivalent to $$|\overline{D} \setminus \{x(x+1) \}| \geq 2.$$
    \end{enumerate} 
\end{thm}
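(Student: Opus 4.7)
For the necessity direction, condition~(1) follows at once from Lemma~\ref{lem:connected}, while condition~(2) follows from Corollary~\ref{cor:connected_projection} applied to the canonical projection $\Phi := \Phi_{f, x(x+1)}$ of Proposition~\ref{prop:projection_morphism}. To verify the equivalence of ``$G_{x(x+1)}(\bar{D})$ connected'' with ``$|\bar{D} \setminus \{x(x+1)\}| \geq 2$'' I would directly inspect the four-element ring $R' := \F_2[x]/(x(x+1)) \cong \F_2 \times \F_2$: each $\bar f \in \{1, x, x+1\}$ lying in $\bar{D}$ contributes a perfect matching on $R'$, so a single matching is disconnected while any two of the three matchings combine into a $4$-cycle.

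For sufficiency, set $T := \langle S_D \rangle$, so the goal is $T = R$. The structural step I would use first is that $T$ is stable under multiplication by $R^{\times}$ (because $\gcd(us,f) = \gcd(s,f)$ for any unit $u$), so from the orbit decomposition $S_D = \bigsqcup_{j=1}^k R^{\times} \cdot f_j$ we obtain $T = \sum_{j} f_j \langle R^{\times} \rangle$. Next I introduce the $\F_2$-linear functional $\epsilon\colon R \to \F_2$, $r \mapsto r(0) + r(1)$, and argue that $\langle R^{\times} \rangle = \ker \epsilon$. The inclusion $\langle R^{\times} \rangle \subseteq \ker \epsilon$ is immediate, since $\epsilon(u) = 1 + 1 = 0$ for every unit $u$. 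For the reverse inclusion, noting that $\ker \epsilon = \ker \Phi + \F_2 \cdot 1$ and $1 \in R^{\times}$, it suffices to prove that every $r \in \ker \Phi = x(x+1)R$ can be written as $u_1 + u_2$ with $u_1, u_2 \in R^{\times}$. I would construct such a decomposition via CRT: at the factors $x^a$ and $(x+1)^b$ any unit is fine since $r(0) = r(1) = 0$, while at each other prime power $p^c$ dividing $f$ the residue field $k_p$ satisfies $|k_p| \geq 4$, so one can pick $u_1 \bmod p$ in $k_p^{\times} \setminus \{r \bmod p\}$ and then $u_2 := u_1 + r$ is automatically a unit as well.

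With $\langle R^{\times} \rangle = \ker \epsilon$ in hand, the remaining algebra is clean. From condition~(1) one has $\sum_j f_j R = R$ as ideals, and hence
\[ T = \sum_j f_j \ker \epsilon = \sum_j f_j \ker \Phi + \sum_j \F_2 f_j = \ker \Phi + \spn_{\F_2}(f_1, \ldots, f_k), \]
where $\sum_j f_j \ker \Phi = x(x+1) \sum_j f_j R = x(x+1) R = \ker \Phi$. Consequently $T = R$ if and only if the image of $\spn_{\F_2}(f_1, \ldots, f_k)$ in $R' = R/\ker \Phi$ is all of $R'$. Under the natural identification of divisors of $x(x+1)$ with the elements of $R' \cong \F_2^2$ this image is $\spn_{\F_2}(\bar{D})$, and since the three nonzero elements $\{1, x, x+1\} \subseteq R'$ are pairwise linearly independent over $\F_2$, the condition $\spn_{\F_2}(\bar{D}) = R'$ is equivalent to $|\bar{D} \cap \{1, x, x+1\}| \geq 2$, i.e.\ to condition~(2). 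The main obstacle will be the pair-of-units decomposition establishing $\langle R^{\times} \rangle = \ker \epsilon$; once that is in place the rest is bookkeeping.
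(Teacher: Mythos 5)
Your proof is correct, and its engine is the same as the paper's: both arguments hinge on the lemma that every element of $x(x+1)R$ is a sum of two units (you prove it by a direct CRT choice of residues, using that every residue field other than those at $x$ and $x+1$ has at least $4$ elements; the paper reduces to the semisimplification and then applies CRT, which is the same computation), and both use B\'ezout from condition~(1) together with reduction modulo $x(x+1)$, where the identity $\gcd(h,x(x+1))\equiv h \pmod{x(x+1)}$ identifies $\overline{D}$ with $\Phi(D)$. Where you genuinely diverge is in the packaging of the sufficiency step: the paper chases elements in three stages (first multiples of $x(x+1)$, then arbitrary $g$ via a path in $G_{x(x+1)}(\overline{D})$), whereas you compute the generated subgroup in closed form, namely $\langle S_D\rangle=\sum_j f_j\langle R^\times\rangle$ with $\langle R^\times\rangle=\ker\epsilon=\ker\Phi+\F_2\cdot 1$, giving $\langle S_D\rangle=\Phi^{-1}\bigl(\spn_{\F_2}(\overline{D})\bigr)$. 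This exact formula is slightly stronger than what the paper extracts in its proof: it immediately yields the equivalence with $|\overline{D}\setminus\{x(x+1)\}|\ge 2$ (two distinct nonzero vectors of $\F_2^2$ always span) and also recovers, for free, the paper's subsequent remark that $G_f(D)$ and $G_{x(x+1)}(\overline{D})$ have the same number of connected components. The paper's element-wise version, on the other hand, makes the role of the path in $G_{x(x+1)}(\overline{D})$ more transparent and avoids having to determine $\langle R^\times\rangle$ exactly. One small point worth making explicit if you write this up: the identity $S_D=\bigsqcup_j R^\times\cdot f_j$ uses the surjectivity of $(\F_q[x]/f)^\times\to(\F_q[x]/(f/f_j))^\times$ to replace the cofactor of $f_j$ by a global unit; it is true, but deserves a sentence.
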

\begin{proof}
By \cref{cor:connected_projection}, we know that if $G_{f}(D)$ is connected then $(1)$ and $(2)$ holds. Conversely, let us assume that $(1)$ and $(2)$ both hold. We will show that $G_{f}(D)$ is connected. This is equivalent to showing that $R: = \F_q[x]/(f) = \langle S_D \rangle.$

The key idea of this proof is similar to the proof for \cref{prop:connected}. The main difficulty is to deal with the fact that $G_{f}(\{1 \})$ is not connected in this case. We will do this step by step.

We first claim that if $g \in R$ such that $x(x+1) \mid g$ then $g = s_1 +s_2$ where $s_1, s_2 \in R^{\times}.$ For an element $s \in R$, $s$ is a unit if and only $\Phi^{\s}(s) \in (R^{\s})^{\times}$ where $R^{\s}= R/J(R)$ is the semisimplification of $R$ (see \cite[Proposition 4.30]{chudnovsky2024prime}). Therefore, for this statement, we can assume that $R = R^{\s}$; namely $f$ is a squarefree polynomial. If we write $f = x(x+1)h$ where $\gcd(x(x+1),h)=1$ then we have the isomorphism 
\[ R \cong \F_2[x]/(x(x+1)) \times \F_2[x]/(h). \]
Under this isomorphism, $g$ is sent to $(0, g_1)$ where $g_1 \in \F_2[x]/(h).$ Since $F_2[x]/(h)$ is a product of fields of order bigger than $2$, every element in it can be written as the sum of two units; say $g_1 = t_1+t_2$ where $t_1, t_2 \in (\F_2[x]/(f))^{\times}.$ We then see that $g =s_1+s_2$ where $s_1= (1,t_1)$ and $s_2 = (1, t_2)$. By definition $s_1, s_2 \in R^{\times}.$

We now claim that if $g \in R$ such that $x(x+1) \mid g$ then $g \in \langle S_{D} \rangle.$ Since $\gcd(f_1,f_2,\ldots, f_k)=1$, we can find $a_1, a_2, \ldots, a_k\in R$ such that 
\[ a_1f_1+a_2f_2+\cdots+ a_kf_k=1.\]
Multiplying both sides with $g$, we get $g = \sum_{i=1}^k a_ig f_i.$ Since $x(x+1) \mid a_ig$, we can write $a_i g = s_{1i}+s_{2i}$ where $s_{1i}, s_{2i} \in R^{\times}.$ This shows that 
\begin{equation} \label{eq:sum_of_units}
 g = \sum_{i=1}^k (s_{1i}f_i +s_{2i}f_i).
\end{equation}
Since $s_{1i}, s_{2i} \in R^{\times}$, $s_{1i}f_i, s_{2i}f_i \in S_{D}.$ This shows that $g \in \langle S_{D} \rangle.$

Finally, let $g$ now be an arbitrary element in $R$. We claim that $g \in \langle S_D \rangle.$ By our assumption, the graph $G_{x(x+1)}(S_{\bar{D}})$ is connected, $0$ and $\Phi_{f,x(x+1)}(g)$ are connected by a path. Consequently,  we can write 
\[ g \equiv  \sum_{i} n_i \gcd(h_i, x(x+1)) \pmod{x(x+1)}, \]
where $n_i \in \Z$ and $\gcd(h_i, f) \in D$. We can check that over $\F_2[x]$ 
\[ \gcd(h, x(x+1)) \equiv  h \pmod{x(x+1)},\]
for all $h \in \F_2[x].$ Therefore, we can write 
\[ g \equiv \sum_{i} n_i h_i \pmod{x(x+1)}. \]
By the previous case, we know that $g- \sum_{i=1}^k n_i h_i \in \langle S_D \rangle.$ This shows that $g \in \langle S_D \rangle$ as well. 
Since this is true for all $g$, we conclude that $R = \langle S_D \rangle.$    
\end{proof}

\begin{rem}
    The proof of \cref{thm:connected_hard} also implies that for the case $\F_2[x]$ and ${x(x+1) \mid f}$, the map $\Phi_{f,g}\colon \F_2[x]/(f) \to \F_2[x]/(x(x+1))$ has the property that $a$ and $b$ belong to the same connected component in $ G_{f}(D)$ if and only $\Phi_{f,x(x+1)}(a)$ and $\Phi_{f,x(x+1)}(b)$ belong to the same connected component in $G_{x(x+1)}(\overline{D}).$ In fact, by \cref{cor:connected_projection}, if $a$ and $b$ are connected by a path then $\Phi_{f,x(x+1)}(a)$ and $\Phi_{f,x(x+1)}(b)$ are connected by a path. Conversely, if $\Phi_{f,x(x+1)}(a)$ and $\Phi_{f,x(x+1)}(b)$ are connected by a path in $G_{x(x+1)}(\overline{D})$, we can write 
\[ a-b \equiv \sum_{i} n_i h_i \pmod{x(x+1)}. \]
where $n_i \in \Z$ and $\gcd(h_1, f) \in D$. From the proof of \cref{thm:connected_hard}, we know that $x(x+1)R \subseteq \langle S_D \rangle$. Therefore, we conclude that $a-b \in \langle S_D \rangle.$ By definition, $a$ and $b$ are connected by a path in $G_{f}(D)$.

We conclude that $G_{f}(D)$ and $G_{x(x+1)}(\overline{D})$ has the same number of connected component. In particular, the number of connected components in $G_{f}(D)$ is at most $2.$

\end{rem}
{
\begin{expl}
     \cref{fig:z_26} shows the graph $G_{f}(D)$ where $f = x^2(x+1) \in \F_{2}[x]$ and ${D = \{x, x+1 \}}$. {We have $S_D=\{x, x+1, x^2+1\}$. The graph $G_{f}(D)$} is a regular graph of degree $3.$ Furthermore, by \cref{thm:connected_hard}, it is connected. 
\begin{figure}[H]
\centering
\includegraphics[width=0.4 \linewidth]{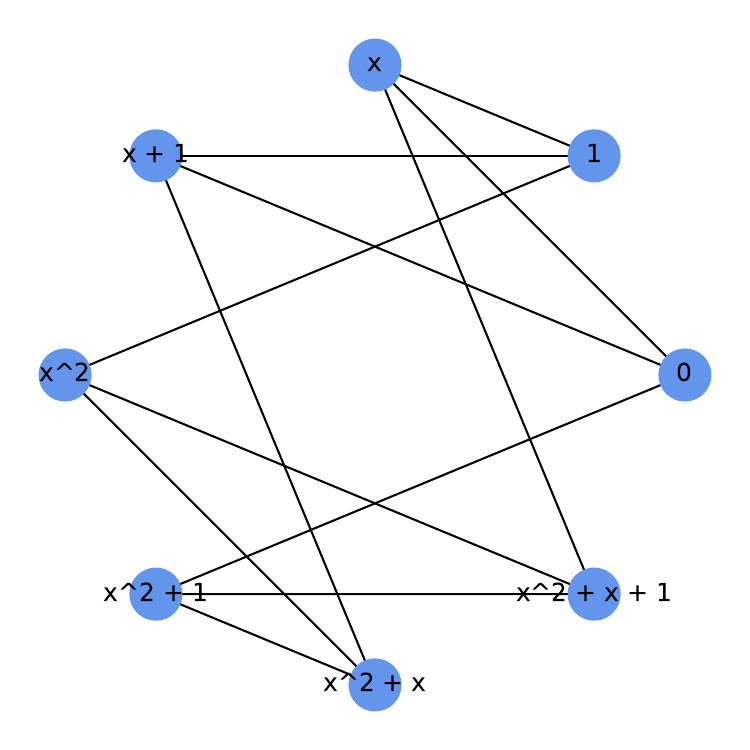}
\caption{The gcd-Cayley graph $G_{x^2(x+1)}(\{x, x+1 \})$}
\label{fig:z_26}
\end{figure}
\end{expl} }

\begin{rem} \label{rem:anti_connected}

While most of our discussions in this section concern the connectedness of $G_{f}(D)$, similar statements hold for the anti-connectedness of $G_{f}(D)$ as well. In fact, the complement of $G_f(D)$ is precisely $G_f(\Div(f) \setminus (D \cup \{f \})$ where $\Div(f)$ is the set of all proper divisors of $f.$
    
\end{rem}
\section{Bipartite property} \label{sec:bipartite}

A graph $G = (V,E)$ is called a {\emph{bipartite}} graph if  $V$ can be partitioned into two disjoint sets $V_1, V_2$ such that for every edge $(u,v) \in E$ either $u \in V_1, v \in V_2$ or $u \in V_2, v \in V_1$. In this case, we write $V = V_1 \sqcup V_2$ and we call $V_1, V_2$ independent sets in $G.$

Bipartite graphs model various real-life situations such as job assignments, resource allocation, stable marriage matching, gene-disease association, and much more. Additionally, from a theoretical point of view, bipartite graphs often provide a good test for theorems and algorithms on graphs. We refer the reader to \cite{bipartite} for some further discussions on this topic.

The goal of this section is to classify all $G_{f}(D)$ which is bipartite. We remark that while our focus is on gcd-graphs over $\F_q[x]$, everything we discuss in this section applies to gcd-graphs over $\Z$ as well (see \cref{rem:bipartite}). To the best of our knowledge, even over $\Z$, this topic has not been explored in the literature. Therefore, in addition to studying this problem for its own merits, we also hope to address a gap in the literature.

We start with the following lemma.

\begin{lem} \label{lem:connected_bipartite}
    Suppose that $G_{f}(D)$ is connected and bipartite. 
    
    \begin{enumerate}
    \item  Let  
    \[ I = \left\{ \sum_{i} n_i h_i : n_i \in \Z, h_i \in S_D \text{ and} \quad  \sum_{i=1}^k n_i \equiv 0 \pmod 2 \right\},\] 
    \[ I_1 = \left\{ \sum_{i} n_i h_i : n_i \in \Z, h_i \in S_D \text{ and} \quad  \sum_{i=1}^k n_i \equiv 1 \pmod 2 \right \}. \]
    Then $I$ is an subgroup of index $2$ in $(R,+).$ Furthermore, $I$ and $I_1$ are independent set such that  $V(G_{f}(D))= I \bigsqcup I_1.$ 
    \item If $G_{f}(\{1 \})$ is connected, then $I$ is an ideal in $R$ as well. 
    \item If $G_{f}(\{1\})$ is not connected, which is equivalent to $x(x+1) \mid f$ and $\F_q=\F_2$ by \cref{prop:not_connected}, then $x(x+1)R \subseteq I.$
    \end{enumerate}
\end{lem}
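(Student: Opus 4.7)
The plan is to interpret each sum $\sum_i n_i h_i$ with $h_i \in S_D$ as a walk from $0$ to $\sum_i n_i h_i$ in the Cayley graph $G_f(D)$. Since $S_D$ is symmetric (because $\gcd(-h,f) = \gcd(h,f)$), such an expression corresponds to a walk of length $\sum|n_i|$, and because $|n_i| \equiv n_i \pmod{2}$, the parity of the walk length equals the parity of $\sum n_i$. Conversely, every walk from $0$ of length $\ell$ writes its terminus as a sum of $\ell$ elements of $S_D$. This dictionary between sums-with-parity and walks-with-parity is the backbone of the argument.

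For part (1), $I$ is easily seen to be a subgroup: it contains $0$ (empty sum), is closed under addition (parities add), and is closed under negation (flipping signs preserves parity). Since $G_f(D)$ is connected, $S_D$ generates $R$ additively, so $I \cup I_1 = R$. The key step is $I \cap I_1 = \varnothing$: if some element admitted both an even-parity and an odd-parity representation, then subtracting the two yields $0 = \sum_j c_j k_j$ with $k_j \in S_D$ and $\sum_j c_j$ odd, which under the dictionary is a closed walk at $0$ of odd length, contradicting the bipartite hypothesis. Thus $[R:I] = 2$ and $V(G_f(D)) = I \sqcup I_1$. Noting that $S_D \subseteq I_1$ (each $h \in S_D$ is itself a sum with coefficient sum $1$), any edge of $G_f(D)$ must connect $I$ to $I_1$, so both sets are independent.

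For part (2) we must upgrade $I$ from a subgroup to an ideal. Since $G_f(\{1\})$ is connected, $R^\times$ generates $R$ as an abelian group, so by linearity it suffices to show $u I \subseteq I$ for every $u \in R^\times$. For $a = \sum n_i h_i \in I$ we have $u a = \sum n_i (u h_i)$, and $\gcd(u h_i, f) = \gcd(h_i, f) \in D$ because multiplication by a unit preserves the gcd with $f$; hence each $u h_i \in S_D$ and the coefficient sum $\sum n_i$ is unchanged, so $u a \in I$.

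For part (3), assume $\F_q = \F_2$ and $x(x+1) \mid f$. By \cref{lem:connected}, $\gcd(f_1,\ldots,f_k) = 1$, so we can write $\sum_i a_i f_i = 1$ for some $a_i \in R$. For any $r \in R$, multiplying through by $x(x+1) r$ gives $x(x+1) r = \sum_i (a_i x(x+1) r) f_i$, and each $g_i := a_i x(x+1) r$ is divisible by $x(x+1)$. The ``sum of two units'' decomposition established inside the proof of \cref{thm:connected_hard} then writes $g_i = s_{1i} + s_{2i}$ with $s_{1i}, s_{2i} \in R^\times$, yielding $x(x+1) r = \sum_i (s_{1i} f_i + s_{2i} f_i)$, a sum of $2k$ terms each lying in $S_D$ (since $\gcd(s f_i, f) = f_i \in D$ for any unit $s$). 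The total coefficient count $2k$ is even, so $x(x+1) r \in I$. I expect the main obstacle to be the step $I \cap I_1 = \varnothing$ in part (1), since that is precisely where the bipartite hypothesis is used; the remaining parts are essentially bookkeeping once the walk-parity dictionary is in place.
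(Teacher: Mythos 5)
Your proof is correct, and for part (1) it takes a genuinely more self-contained route than the paper. The paper establishes that the bipartition class $A$ containing $0$ is an index-$2$ subgroup of $(R,+)$ by citing an external result (Proposition 2.6 of Biswas et al.), then shows $I\subseteq A$ and squeezes $I=A$ from the index computation. You instead prove directly that $I$ is a subgroup and that $I\cap I_1=\varnothing$ via the walk-parity dictionary: a common even/odd representation of the same element would produce a closed walk of odd length at $0$, impossible in a bipartite graph. This replaces the citation with the elementary ``bipartite iff no odd closed walk'' characterization and, as a bonus, yields the independence of $I$ and $I_1$ immediately from $S_D\subseteq I_1$; the only hypotheses you quietly use --- that $S_D$ is symmetric (true, since the monic $\gcd$ is unchanged by negation) and nonempty (forced by connectedness) --- are harmless. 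Parts (2) and (3) coincide with the paper's argument: the paper asserts $sI=I$ for units $s$ and then uses $\langle R^\times\rangle=R$, exactly as you do, and part (3) is precisely the computation behind \cref{eq:sum_of_units} in the proof of \cref{thm:connected_hard}, which the paper invokes without repeating and you spell out (correctly noting that the resulting coefficient sum $2k$ is even).
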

\begin{proof}
(1)  Let us consider the first part of this lemma. Because $G_{f}(D)$ is bipartite, we can write 
\[ V(G_{f}(D)) = A \bigsqcup B \]
where $A,B$ are two independent sets in $G_{f}(D).$ Without loss of generality, we can assume that $0 \in A$. We claim that $A=I$. By the proof of \cite[Proposition 2.6]{biswas2019cheeger}, we know that $A$ is a subgroup of $(R,+)$ of index $2.$ By definition, if $h_i \in S_D$, then $(0,h_i) \in E(G_{f}(D))$. Because $A$ and $B$ are disjoint independent set in $G_{f}(D)$, $h_{i} \in B$. Furthermore, since $A$ is a subgroup of index $2$ in $R$, $h_i + h_j \in A$ for all $h_i,h_j \in S_D.$  Consequently, we must have $I \subseteq A.$ 

By our assumption that $G_{f}(D)$ is connected we know that $R = \langle S_D \rangle$ and hence ${I \sqcup I_1 = \langle S_D \rangle = R}.$ Clearly if $a\in R\setminus I=I_1$ then $2a\in I$. 
From this, we can see that $I$ is a subgroup whose index is at most $2$ in $R$. Additionally, because $I \subseteq A$ and $A$ has index $2$ in $R$, we must have that $I=A$ and $I_1 = B.$

\smallskip
\noindent (2) Suppose that $G_{f}(\{1\})$ is connected. We will show that $I$ is an ideal in $R$ as well. The idea is similar to the proof of \cref{prop:connected} so we will be brief. Specifically, we note that for each $s \in R^{\times}$, $sI = I.$ Since $\langle R^{\times} \rangle =R$, this shows that $aI \subseteq I$ for all $a \in R.$ We conclude that $I$ is an ideal in $R$. 

\smallskip
\noindent(3) Finally, part $(3)$ follows from Equation \cref{eq:sum_of_units}.
\end{proof}

\begin{cor}
    Suppose that either $\F_q \neq \F_2$ or $\gcd(x(x+1),f)=1$. Then $G_{f}(D)$ is not a bipartite graph. 
\end{cor}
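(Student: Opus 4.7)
The plan is to directly exhibit a triangle in $G_f(D)$, which immediately contradicts bipartiteness. Fix any $f_i \in D$ (taken to be a proper divisor of $f$, so $f_i \not\equiv 0 \pmod f$). I will produce an element $u \in \F_q[x]$ whose image in $R = \F_q[x]/f$ satisfies $u \neq 1$ in $R$, with both $u$ and $u-1$ units in $R$. The vertices $0, f_i, u f_i$ will then form the desired triangle: since $\gcd(u,f) = 1 = \gcd(u-1,f)$ in $\F_q[x]$, the standard UFD identity $\gcd(ab,c) = \gcd(b,c)$ (when $\gcd(a,c)=1$) gives
\[
\gcd(f_i,f) \;=\; \gcd(uf_i,f) \;=\; \gcd((u-1)f_i,f) \;=\; f_i \;\in\; D,
\]
so all three pairwise differences of $\{0,f_i,uf_i\}$ lie in $S_D$. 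Distinctness of the three vertices follows from $f_i \neq 0$ together with $u$ and $u-1$ being units in $R$.

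For the existence of such a $u$, I distinguish the two hypotheses. If $\F_q \neq \F_2$, pick any $\alpha \in \F_q \setminus \{0,1\}$ and set $u = \alpha$: both $\alpha$ and $\alpha-1$ then lie in $\F_q^\times \subseteq R^\times$. If instead $\F_q = \F_2$ and $\gcd(x(x+1),f) = 1$, take $u = x$; then $u = x$ and $u - 1 = x+1$ are both coprime to $f$ and therefore units in $R$. In either case the triangle construction goes through, so $G_f(D)$ contains an odd cycle and cannot be bipartite.

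The construction is elementary, so the only real obstacle is bookkeeping the case split and checking the gcd identities cleanly. An alternative route, more in the spirit of \cref{lem:connected_bipartite}, would be to pass to the connected component of $0$---which is isomorphic to a connected gcd-graph over $\F_q[x]/(f/f_0)$ with $f_0 = \gcd(f_1,\ldots,f_k)$---and then rule out the existence of the index-$2$ ideal demanded by \cref{lem:connected_bipartite}(2), since such an ideal would force a surjection onto $\F_2$ and hence both $\F_q = \F_2$ and a linear factor $x$ or $x+1$ in $f/f_0$. The direct triangle argument achieves the same contradiction without any reduction and applies uniformly whether or not $G_f(D)$ is connected.
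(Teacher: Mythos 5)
Your proof is correct, and it takes a genuinely different route from the paper's. The paper deduces the corollary from \cref{lem:connected_bipartite}: under either hypothesis $G_{f}(\{1\})$ is connected by \cref{prop:not_connected}, so a connected bipartite $G_{f}(D)$ would produce an ideal of index $2$ in $\F_q[x]/f$, and no such ideal exists (every quotient has order a power of $q>2$; over $\F_2$ an index-$2$ ideal would force $x$ or $x+1$ to divide $f$). You instead exhibit an explicit triangle $\{0, f_i, uf_i\}$ with $u$ and $u-1$ units, via $\gcd(uf_i,f)=\gcd((u-1)f_i,f)=\gcd(f_i,f)=f_i\in D$; the gcd identity and the existence of $u$ in both cases check out, and $u-1$ being a unit already forces the three vertices to be distinct. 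Your argument is more elementary, yields the stronger conclusion that $G_f(D)$ has odd girth $3$, and -- importantly -- applies uniformly whether or not $G_f(D)$ is connected, a hypothesis that \cref{lem:connected_bipartite} carries and that the paper's one-line proof quietly glosses over (strictly, one should pass to the component of $0$ first, since a disconnected graph is bipartite iff each component is). What the paper's structural route buys is the index-$2$ subgroup machinery that it reuses afterwards to classify the genuinely bipartite cases when $\F_q=\F_2$ and $x$ or $x+1$ divides $f$; your triangle is self-contained but does not feed into that classification. Both arguments implicitly require $D\neq\emptyset$ and the $f_i$ to be proper divisors of $f$, as the paper assumes throughout.
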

\begin{proof}
   If $\F_q \neq \F_2$ or $\gcd(x(x+1),f)=1$, we know that $G_{f}(\{1\})$ is connected by \cref{prop:not_connected}. Furthermore, $\F_q[x]/(f)$ has no ideal of index $2.$ By \cref{lem:connected_bipartite}, we conclude that $G_{f}(D)$ is not bipartite. 
\end{proof}
We now consider the case $\F_q=\F_2$ and $\gcd(x(x+1),f) \neq 1.$ We first consider the following easier case. 

\begin{thm}
    Suppose that $f$ is a polynomial in $\F_2[x]$ such that ${\gcd(x(x+1),f) \not \in \{1, x(x+1)\}}.$ Then $G_{f}(D)$ is bipartite if and only if $\gcd(x(x+1),f) \nmid f_i$ for all $1 \leq i \leq k.$
\end{thm}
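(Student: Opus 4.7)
The plan is to reduce the theorem to the classification of index-$2$ ideals in $R := \F_2[x]/f$. Let $p := \gcd(x(x+1), f)$; by hypothesis $p\in\{x,x+1\}$, and the multiplicativity $\gcd(x(x+1),f)=\gcd(x,f)\cdot\gcd(x+1,f)$ forces the other of $x,x+1$ to be coprime to $f$. Consequently $p$ is the \emph{unique} degree-$1$ irreducible factor of $f$. Since every index-$2$ ideal of $R$ is automatically maximal (its quotient is a two-element ring, hence $\cong\F_2$), and maximal ideals of $R$ with residue field $\F_2$ correspond to degree-$1$ irreducible factors of $f$, the ring $R$ possesses a unique ideal of index $2$, namely $I_0 := pR$. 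This structural fact will drive both directions of the biconditional.

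For the ($\Leftarrow$) implication, suppose $p\nmid f_i$ for every $i$. I would take $\phi : R \to R/I_0 \cong \F_2$ to be the canonical projection. Given any $s\in S_D$, write $\gcd(s,f)=f_j$ for some $j$; since $p\mid f$, one has $p\mid s$ iff $p\mid\gcd(s,f)=f_j$, and the latter is ruled out by assumption. Thus $\phi(s)=1$ for every $s\in S_D$, and the partition $\phi^{-1}(0)\sqcup\phi^{-1}(1)$ of $R$ exhibits $G_f(D)$ as bipartite.

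For the ($\Rightarrow$) implication, suppose $G_f(D)$ is bipartite. Because $\gcd(x(x+1),f)\neq x(x+1)$ we have $x(x+1)\nmid f$, so \cref{prop:not_connected} implies that $G_f(\{1\})$ is connected. Parts (1) and (2) of \cref{lem:connected_bipartite} then produce an ideal $I\subseteq R$ of index $2$, which by the uniqueness observed above must coincide with $I_0=pR$. Since $I$ is an independent set containing $0$, no neighbour of $0$ can lie in $I$, giving $S_D\cap I=\emptyset$. Each $f_i$ belongs to $S_D$ because $\gcd(f_i,f)=f_i\in D$, so $f_i\notin pR$ for all $i$, i.e., $p\nmid f_i$.

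The main obstacle I anticipate is pinning down that the index-$2$ ideal furnished by \cref{lem:connected_bipartite} is forced to be $pR$; once this is in place, both directions reduce to unwinding the definition of $S_D$. This is precisely where the hypothesis $\gcd(x(x+1), f)\notin\{1, x(x+1)\}$ intervenes: it pins down the number of degree-$1$ irreducible factors of $f$ to exactly one, removing any ambiguity in the classification of index-$2$ ideals of $R$.
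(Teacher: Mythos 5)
Your argument is correct and follows essentially the same route as the paper: the paper likewise normalizes to the unique degree-one factor (WLOG $x\mid f$, $x+1\nmid f$), exhibits the bipartition $xR\sqcup(1+xR)$ — i.e., the fibers of your projection $\phi$ — for the backward direction, and for the forward direction invokes \cref{lem:connected_bipartite} together with the observation that $xR$ is the only index-$2$ ideal of $R$. Your packaging of that uniqueness via maximal ideals with residue field $\F_2$ is just a cleaner justification of the same step.
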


\begin{proof}
    Without loss of generality, we can assume that $x \mid f$ but $x+1 \nmid f.$ First, let us assume that $x \nmid f_i$ for all $1 \leq i \leq k$. In this case, we can see that $A = xR$ and $B = 1+xR$ are independent subsets in $G_{f}(D)$ such that $V(G_{f}(D)) = A \bigsqcup B.$ This shows that $G_{f}(D)$ is bipartite. 

    Conversely, let us assume that $G_{f}(D)$ is bipartite. We claim that $x \nmid f_i$ for all ${1 \leq i \leq k}$. Since $x(x+1) \nmid f$, by \cref{prop:connected} and \cref{prop:not_connected}, we know that $G_{f}(\{1\})$ is connected. Therefore, by \cref{lem:connected_bipartite}, there exists an ideal $I$ of index $2$ in $R$ such that $I$ is an independent set in $G_{f}(D).$ We remark  that since the only ideal of index $2$ in $R$ is $xR$, $I=xR$. Because $0 \in I$ and $I$ is independent we must have 
    \[ xR \cap \{f_1, f_2, \ldots, f_k \} = \emptyset. \]
    In other words, $x \nmid f_i$ for all $1 \leq i \leq k.$
\end{proof}

Finally, let us consider the trickiest case where $x(x+1) \mid f.$

\begin{thm}
Suppose that $x(x+1) \mid f$ and that $G_{f}(D)$ is connected. Let 
    \[ \overline{D} = \{ \gcd(f_i, x(x+1)) : f_i \in D \}.\]
    Then $G_{f}(D)$ is bipartite if and only if $|\overline{D}| = 2.$
\end{thm}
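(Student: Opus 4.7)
The plan is to exploit the fact that, in the present setting, $\F_2[x]/(x(x+1)) \cong \F_2\times\F_2$ is a Klein four group whose three nontrivial additive subgroups of index $2$ are exactly $\{0,1\}$, $\{0,x\}$, and $\{0,x+1\}$. The bipartition data of $G_f(D)$ should live entirely at the level of this quotient, because Lemma 4.1(3) tells us any candidate index-$2$ independent set $I \subseteq R := \F_2[x]/f$ automatically contains $x(x+1)R = \ker \Phi_{f,x(x+1)}$. A crucial observation used throughout, already noted in the proof of Theorem 3.5, is that for every $h \in \F_2[x]$ one has $h \equiv \gcd(h,x(x+1)) \pmod{x(x+1)}$, so $\Phi_{f,x(x+1)}(S_D) = \overline{D}$ as subsets of $\F_2[x]/(x(x+1))$.

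For the forward direction, suppose $G_f(D)$ is bipartite. Apply Lemma 4.1 parts (1) and (3) to produce an additive subgroup $I \leq R$ of index $2$ containing $x(x+1)R$, with $I_1 := R \setminus I \supseteq S_D$. Pushing down via $\Phi := \Phi_{f,x(x+1)}$, the image $\Phi(I)$ is an index-$2$ subgroup of $\F_2[x]/(x(x+1))$, and $\Phi(I_1)$ is the unique nontrivial coset, which has exactly two elements. Since $\Phi(S_D) = \overline{D}$ by the observation above, we get $\overline{D} \subseteq \Phi(I_1)$, hence $|\overline{D}| \leq 2$. Because $G_f(D)$ is connected, Theorem 3.5 forces $|\overline{D} \setminus \{x(x+1)\}| \geq 2$, so $|\overline{D}| \geq 2$ and equality holds.

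For the backward direction, suppose $|\overline{D}| = 2$. Connectedness together with Theorem 3.5 again rules out $x(x+1) \in \overline{D}$, so $\overline{D}$ is a two-element subset of $\{1,x,x+1\}$. A direct check reveals that each such pair is exactly the nontrivial coset of one of the three index-$2$ additive subgroups listed above: namely, $\{x,x+1\}$ is a coset of $\{0,1\}$, $\{1,x\}$ a coset of $\{0,x+1\}$, and $\{1,x+1\}$ a coset of $\{0,x\}$. Let $\bar I$ denote that subgroup and set $I := \Phi^{-1}(\bar I) \leq R$, an index-$2$ additive subgroup of $R$. Since $\Phi(S_D) = \overline{D}$ is disjoint from $\bar I$, we conclude $S_D \cap I = \emptyset$, so no two vertices of $I$ are adjacent; similarly, any two elements of $R \setminus I$ differ by an element of $I$, which is not in $S_D$, so $R \setminus I$ is also independent. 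This yields a bipartition $V(G_f(D)) = I \sqcup (R \setminus I)$.

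The only real subtlety is the initial bookkeeping, ensuring that $\Phi(S_D) = \overline{D}$ and that $I = \Phi^{-1}(\bar I)$ really does have index $2$ in $R$; both reduce immediately to the identity $h \equiv \gcd(h,x(x+1)) \pmod{x(x+1)}$ over $\F_2[x]$ and to $\ker\Phi = x(x+1)R \subseteq I$. Everything else is an enumeration of the three possible shapes of $\overline{D}$, which is why the characterization of bipartiteness boils down cleanly to the cardinality $|\overline{D}|$.
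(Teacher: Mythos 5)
Your proof is correct, and it runs on the same machinery as the paper's: \cref{lem:connected_bipartite}, the identity $h\equiv\gcd(h,x(x+1))\pmod{x(x+1)}$ over $\F_2$, and the reduction of everything to the Klein four-group $\F_2[x]/(x(x+1))$. The difference is organizational but worth noting. In the direction $|\overline{D}|=2\Rightarrow$ bipartite, the paper enumerates the three possible sets $\overline{D}$ and writes down an explicit bipartition in each case ($xR\sqcup(1+xR)$, $(x+1)R\sqcup(1+(x+1)R)$, and $\{g: g(0)=g(1)\}$ with its translate); these are precisely your pullbacks $\Phi^{-1}(\bar I)$ of the three index-$2$ subgroups, so you have packaged the same construction uniformly. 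The genuine divergence is in the converse: to rule out $|\overline{D}|\geq 3$, the paper argues that $\overline{D}=\{1,x,x+1\}$ makes $G_{x(x+1)}(\overline{D})$ a $K_4$, uses paths of length $2$ to show every $g$ lies in $I$, and derives a contradiction with $[R:I]=2$; you instead observe directly that $S_D\subseteq I_1$ forces $\overline{D}=\Phi(S_D)$ into the two-element coset $\Phi(I_1)$, giving $|\overline{D}|\leq 2$ in one stroke. Your version of that step is shorter and avoids the path-counting entirely, at the cost of making explicit the (easy but necessary) bookkeeping that $\ker\Phi=x(x+1)R\subseteq I$ so that $\Phi(I)$ really has index $2$ downstairs --- which you do address via part (3) of the lemma. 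Both arguments are sound; yours is the more uniform of the two.
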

\begin{proof}
Suppose that $|\overline{D}| = 2$. We claim that $G_{f}(D)$ is bipartite. Because $G_{f}(D)$ is connected, we know by \cref{thm:connected_hard} that $|\overline{D} \setminus \{x(x+1)\}| \geq 2.$ This shows that $\overline{D} \neq \{1, x(x+1) \}.$ We can check that $\overline{D}$ must be one of the following sets $\{1, x\}$, $\{1, x+1 \}$, $\{x, x+1 \}.$

    First, we consider the case that $\overline{D} = \{1, x+1 \}$. In this case $V(G_{f}(D)) = xR \bigsqcup (1+xR)$ is a decomposition of $G_{f}(D)$ into a disjoint union of two independent sets. Similarly, if $\overline{D}=\{1, x\}$ then $V(G_{f}(D)) = (x+1)R \bigsqcup (1+(x+1)R)$ is a decomposition of $G_{f}(D)$ into a disjoint union of two independent sets. Now, suppose that $\overline{D} = \{x, x+1 \}.$ 
    Let 
\[ A = \{g \in R| g(0) = g(1)\}, \]
and 
\[ B = \{g \in R : g(0) \neq g(1) \} = x + A.\]
We can check that if $a_1, a_2 \in A$ then $\gcd(a_1-a_2, x(x+1)) \in \{1, x(x+1) \}.$  By definition, $(a_1, a_2) \not \in E(G_{f}(D)).$ This shows that $A$ is an independent set in $G_{f}(D)$. Similarly, $B$ is an independent set in $G_{f}(D)$ as well. We conclude that $G_{f}(D)$ is bipartite.

Conversely, suppose $G_{f}(D)$ is bipartite. We claim  that $|\overline{D}| =2.$ By \cref{lem:connected_bipartite}, the subgroup 
     \[ I = \left\{ \sum_{i} n_i h_i : n_i \in \Z, h_i \in S_D \text{ and} \quad  \sum_{i=1} n_i \equiv 0 \pmod 2 \right\},\] 
is an independent set in $G_{f}(D).$ Furthermore, by part $(3)$ of \cref{cor:connected_projection}, we know that $x(x+1)R \subset I.$ Because $0$ is not connected to any node in $I$, we must have ${x(x+1)R \cap S_{D} = \emptyset}$. Consequently $x(x+1) \not \in \overline{D}.$ Suppose to the contrary that $|\overline{D}| \geq 3$. We then have $\overline{D} = \{1, x, x+1 \}.$ This implies that $G_{x(x+1)}(\overline{D})$ is the complete graph $K_4.$ In $K_4$, there is a path of length $2$ from $0$ to any other vertices. From this property, we conclude that for each $g \in R$, we can write 
\[ g \equiv \gcd(h_1, x(x+1)) + \gcd(h_2, x(x+1)) \pmod{x(x+1)}, \]
for some $h_1, h_2 \in S_D$. We then conclude that $g-h_1-h_2 \in x(x+1)R \subset I$. {By the definition of $I$}, $h_1+h_2 \in I$, we conclude that $g \in I $ as well. This shows that $I=R$, which is a contradiction. 
\end{proof}

\begin{rem} \label{rem:bipartite}
    By an almost identical argument, we can show that the gcd-graph $G_{n}(D)$ over $\Z$ with $D = \{d_1, d_2, \ldots, d_k\}$ and $\gcd(d_1, d_2, \ldots, d_k)=1$, is bipartite if and only if $2 \mid n$ and $2 \nmid d_i$ for all $1 \leq i \leq k.$
\end{rem}
\section{Primeness of gcd-graphs} \label{sec:prime}
For a given graph $G$, a {\emph{homogeneous}} set in  $G$ is a set $X$ of vertices of $G$ such that every vertex in $V(G) \setminus X$ is adjacent to either all or none of the vertices in $X$. A homogenous set $X$ is said to be non-trivial if $2 \leq X < |V(G)|$. The graph $G$ is said to be {\emph{prime}} if it does not contain any non-trivial homogeneous sets.

The concept of a homogeneous set appears in various branches of mathematics (see \cite{brandstadt1999graph, chudnovsky2024prime,  gallai1967transitiv, nguyen2022broadcasting} for some concrete examples). One of our main motivations comes from the fact that homogeneous sets allow us to decompose a network into a multilevel network of smaller graphs. From both theoretical and computational perspectives, such a decomposition is crucial for understanding the dynamics of multilevel networks (see \cite{boccaletti2014structure, jain2023composed, kivela2014multilayer, nguyen2022broadcasting}). In \cite{chudnovsky2024prime}, in collaboration with some other graph theorists, we completely classify prime unitary Cayley graphs on finite commutative rings. In this section, we study the problem for the gcd-graph $G_{f}(D).$ We remark that for a graph $G$, a connected component of $G$ (or of its complement) is a homogeneous set in $G$. Therefore, while studying the primeness of $G$, it is safe to assume that $G$ is connected and anti-connected. For this reason, we will assume throughout this section that $G_f(D)$ is both connected and anti-connected (we refer the reader to \cref{sec:connectedness} for precise conditions for these properties to hold). 

An important property of homogeneous sets is that they are preserved under a graph isomorphism. For this reason, we start our discussion with the following observation.

\begin{prop} \label{prop:mult_by_unit}
Let $a \in (\F_q[x]/(f))^{\times}.$ Let $m_a\colon \F_q[x]/(f) \to \F_q[x]/(f)$ be map induced by the multiplication by $a$. Then $m_a$ induces an automorphism on $G_{f}(D).$
\end{prop}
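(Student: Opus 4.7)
The plan is to verify directly that $m_a$ is a bijection of the vertex set that preserves the adjacency relation defining $G_f(D)$, and conclude it is a graph automorphism.

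First I would observe that since $a \in (\F_q[x]/f)^{\times}$ is a unit, the map $m_a$ has a two-sided inverse given by $m_{a^{-1}}$, so it is a bijection on $V(G_f(D)) = \F_q[x]/f$. This handles the bijectivity requirement for free.

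Next, I would check that $m_a$ preserves adjacency in both directions. Two vertices $u, v$ are adjacent in $G_f(D)$ precisely when $\gcd(u-v, f) \in D$. Under $m_a$, the difference becomes $m_a(u) - m_a(v) = a(u-v)$, so I need to show
\[
\gcd(a(u-v), f) = \gcd(u-v, f)
\]
as monic polynomials. The key input is that $a$ being a unit in $\F_q[x]/f$ is equivalent to $\gcd(a, f) = 1$ in $\F_q[x]$ (after lifting $a$ to a representative, which does not affect the gcd with $f$). Once $\gcd(a, f) = 1$, the standard multiplicative property of gcd in the UFD $\F_q[x]$ gives $\gcd(a(u-v), f) = \gcd(u-v, f)$. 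Therefore $\gcd(m_a(u) - m_a(v), f) \in D$ if and only if $\gcd(u-v, f) \in D$, which is exactly the biconditional required for a graph isomorphism.

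There is no serious obstacle here; the proposition is essentially a formal observation. The only subtlety worth spelling out is the choice of representative for $a$ when computing $\gcd(a, f)$: one should pick a lift $\tilde a \in \F_q[x]$ of $a$ and verify that $\gcd(\tilde a, f) = 1$ follows from invertibility modulo $f$, which is the standard characterization of units in $\F_q[x]/f$. Combining bijectivity with the edge-preservation biconditional then immediately yields that $m_a$ is an automorphism of $G_f(D)$.
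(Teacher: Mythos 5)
Your proposal is correct and follows essentially the same route as the paper: the paper's proof simply notes that $m_a$ is an automorphism of $(\F_q[x]/f,+)$ and that $aS_D = S_D$, which is exactly the gcd-invariance $\gcd(a(u-v),f)=\gcd(u-v,f)$ you verify explicitly. Your version just spells out the unit-implies-coprime-to-$f$ step and the choice of lift, which the paper leaves implicit.
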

\begin{proof}
    Since $a \in (\F_q[x]/(f))^{\times}$, $m_a$ is an automorphism of $(\F_q[x]/(f), +)$.  Furthermore, $m_a$ preserves $S_D$; i.e $a S_{D} = S_D.$ As a result, $m_a$ is an automorphism of $G_{f}(D).$
\end{proof}

\begin{prop} \label{prop:prime_implies_ideal}
    Assume that $G_{f}(\{1\})$ is connected. Then, the following conditions are equivalent. 

    \begin{enumerate}
        \item $G_{f}(D)$ is not a prime graph. 
        \item There exists a non-trivial ideal $I$ in $\F_q[x]/(f)$ such that $I$ is a homogeneous set in $G_{f}(D).$
    \end{enumerate}
\end{prop}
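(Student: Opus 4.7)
The direction $(2)\Rightarrow(1)$ is immediate: a non-trivial ideal $I$ of $R = \F_q[x]/f$ that is a homogeneous set is by definition a witness to the non-primeness of $G_f(D)$, since $2 \le |I| < |R|$.

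For the converse, the plan is to start with any non-trivial homogeneous set of $G_f(D)$ and refine it until it becomes an ideal. Since $G_f(D) = \Gamma(R, S_D)$ is a Cayley graph, every additive translation is a graph automorphism, so any non-trivial homogeneous set can be translated to contain $0$. Among all non-trivial homogeneous sets containing $0$, pick one of minimal cardinality, and call it $X$. I will argue that this minimal $X$ is automatically a non-trivial ideal of $R$.

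The first half of the argument shows that $X$ is an additive subgroup of $R$. For any $r \in X$, the translate $-r + X$ is another homogeneous set of the same cardinality as $X$ containing $0$. If it differs from $X$, the two overlap properly (they share $0$ but have equal size, so neither contains the other), and by the standard overlap principle of modular decomposition the intersection $X \cap (-r + X)$ is again a homogeneous set, strictly smaller than $X$ and containing $0$; minimality then forces this intersection to be $\{0\}$. A case analysis on the two alternatives, using in parallel the translate $r + X$ and the symmetry $S_D = -S_D$, yields $r \in \mathrm{Stab}_+(X)$ for every $r \in X$. Hence $X = \mathrm{Stab}_+(X)$ is a subgroup of $(R,+)$.

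The second half promotes $X$ from a subgroup to an ideal. For each unit $u \in R^\times$, \cref{prop:mult_by_unit} says that $m_u$ is an automorphism of $G_f(D)$, so $uX$ is again a minimal homogeneous set containing $0$. Running the same overlap/minimality argument gives $uX = X$ or $uX \cap X = \{0\}$. The main obstacle I expect is ruling out the second possibility, which I plan to handle by examining the $R^\times$-orbit of $X$ and using the additive closure of $X$ (together with the fact that any $R^\times$-translate of $X$ is again a minimal module through $0$) to contradict the minimality of $X$. Once $uX = X$ for every unit $u$, the connectedness of $G_f(\{1\})$ enters decisively: by the proof of \cref{prop:connected}, every element of $R$ is a $\Z$-linear combination of units, and therefore $aX \subseteq X$ for every $a \in R$. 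Thus $X$ is a non-trivial ideal of $R$ which is simultaneously a homogeneous set of $G_f(D)$, completing the proof.
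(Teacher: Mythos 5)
The direction $(2)\Rightarrow(1)$ is fine, but your converse has a genuine gap, and it lies exactly where you replace the paper's choice of a \emph{maximal} proper homogeneous set containing $0$ by a \emph{minimal} one. A minimal non-trivial homogeneous set containing $0$ in a Cayley graph on $(\F_q[x]/f,+)$ need not be a subgroup, so the conclusion you want from your ``case analysis'' --- that $r\in\mathrm{Stab}_+(X)$ for every $r\in X$ --- is false in general. Concretely, take $q=3$, $f=x^2(x+1)$ and $D=\{1\}$; here $G_f(\{1\})$ is connected and anti-connected and $G_f(D)$ is not prime. Under the isomorphism $R\cong \F_3[x]/(x^2)\times\F_3$, the two-element set $X=\{(0,0),(x,0)\}$ is a homogeneous set: a vertex $(a,b)$ outside $X$ is adjacent to both of its elements when $b\neq 0$ and $a\notin(x)$, and to neither otherwise. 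Thus $X$ is a minimal non-trivial homogeneous set containing $0$, yet $(x,0)+(x,0)=(2x,0)\notin X$, so $X$ is not closed under addition. For $r=(x,0)$ your dichotomy correctly lands in the branch $X\cap(-r+X)=\{0\}$, but no amount of further case analysis can upgrade this to $r+X=X$, because the desired conclusion is simply not true for this $X$. The second half of your write-up, where you need $uX=X$ for all units $u$, is moreover only a sketched plan with an acknowledged unresolved obstacle, so even granting the subgroup step the argument is not complete.

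The paper avoids all of this by taking $H$ to be the \emph{maximal} proper homogeneous set containing $0$ and citing \cite[Theorem 3.4]{chudnovsky2024prime} for the fact that $H$ is a subgroup. The structural input is Gallai's modular decomposition theorem: since $G_f(D)$ is assumed connected and anti-connected (the standing hypothesis of this section), the maximal proper homogeneous sets partition the vertex set, so the one containing $0$ is unique. Hence for $r\in H$ the translate $r+H$ is again a maximal proper homogeneous set meeting $H$, forcing $r+H=H$; and for a unit $u$ the automorphism $m_u$ of \cref{prop:mult_by_unit} fixes $0$, so $uH=H$. From there your final step coincides with the paper's: connectedness of $G_f(\{1\})$ expresses every $a\in R$ as a $\Z$-linear combination of units, whence $aH\subseteq H$ and $H$ is an ideal. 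If you want a self-contained proof, replace ``minimal'' by ``maximal proper'' and prove the uniqueness statement; minimality is the wrong extremal choice here.
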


\begin{proof}
    Clearly $(2)$ implies $(1)$. Let us now prove $(1)$ implies $(2)$. The proof that we discuss here is quite similar to the one that we gave for \cite[Theorem 4.1]{chudnovsky2024prime} and \cref{prop:connected}. Because $G_f(D)$ is not prime, we can find a maximal homogenous set $H$ containing $0.$ By \cite[Theorem 3.4]{chudnovsky2024prime}, we know that $H$ is a subgroup of $\F_q[x]/(f)$. We claim that it is an ideal as well. By \cref{prop:mult_by_unit}, we conclude that $aH = H$ for all $a \in (\F_q[x]/(f))^{\times}.$ Furthermore, since $G_{f}(\{1\})$ is connected, $aH \subseteq H$ for all $a \in \F_q[x]/(f)$ as well. This shows that $H$ is an ideal in $\F_q[x]/(f).$
\end{proof}

By \cref{prop:prime_implies_ideal}, in order to study the prime property of $G_f(D)$, it is essentially equivalent to classify $g$ such that the ideal generated by $g$ is a homogenous set in $G_f(D).$ In order to do so, we first introduce the following lemmas. 

\begin{lem} \label{lem:elementary}
    Let $a,b, c \in \F_q[x]$ such that $\gcd(a,b) = \gcd(b,c)= m$ and $c \mid f.$ Then, there exists $t \in \F_q[x]$ such that 
    \[ \gcd(a-bt, f)=c.\]
\end{lem}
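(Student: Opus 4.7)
The plan is to solve $\gcd(a-bt,f)=c$ in two stages: first pick $t_0$ so that $c\mid a-bt_0$, and then adjust $t=t_0+(c/m)s$ with $s\in\F_q[x]$ chosen to guarantee that the quotient $(a-bt)/c$ is coprime to $g:=f/c$. The first stage is possible because $m=\gcd(a,b)$ divides $a$, so the congruence $(a/m)\equiv(b/m)t_0\pmod{c/m}$ has a unique solution modulo $c/m$, using that $\gcd(b/m,c/m)=1$ (which follows from $\gcd(b,c)=m$). Writing $a-bt_0=ch_0$, every solution of $a\equiv bt\pmod c$ has the form $t=t_0+(c/m)s$, for which $(a-bt)/c=h_0-(b/m)s$.

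For the second stage, it suffices to arrange $p\nmid h_0-(b/m)s$ at every monic irreducible factor $p$ of $g$, and then combine these conditions by the Chinese Remainder Theorem. I would split into two cases at each such $p$. If $p\nmid b/m$, then $(b/m)s$ takes every residue modulo $p$ as $s$ varies, so any $s\not\equiv(b/m)^{-1}h_0\pmod p$ works. If $p\mid b/m$, the claim will be that $p\nmid h_0$ automatically, so no constraint on $s$ is imposed at $p$. Since the per-prime constraints are independent, CRT then delivers a single $s$ satisfying all of them.

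The only substantive step is verifying that $p\mid b/m$ forces $p\nmid h_0$; this is the place where both hypotheses $\gcd(a,b)=m$ and $\gcd(b,c)=m$ are used together, and I expect it to be the main obstacle. Let $\alpha,\beta,\gamma,\mu$ be the $p$-adic valuations of $a,b,c,m$. The assumption $p\mid b/m$ is $\beta>\mu$. Combined with $\min(\alpha,\beta)=\mu$ and $\min(\beta,\gamma)=\mu$, this forces $\alpha=\mu=\gamma$. Consequently $v_p(bt_0)\geq\beta>\alpha=v_p(a)$, which gives $v_p(a-bt_0)=\alpha=\gamma=v_p(c)$, whence $v_p(h_0)=0$. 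With this observation in hand the two-stage construction goes through, producing the required $t$.
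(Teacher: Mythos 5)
Your proof is correct and follows essentially the same two-stage strategy as the paper: first solve the congruence $bt\equiv a \pmod{c}$, then perturb the solution and use the Chinese Remainder Theorem over the irreducible factors of $f/c$, with the coprimality of the quotient to $b/m$ supplying the key input. The only difference is cosmetic: the paper first normalizes to $m=1$ by dividing $a,b,c,f$ by $m$, which replaces your local valuation computation at primes dividing $b/m$ with the one-line observation that $\gcd(a_1,b)=1$.
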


\begin{proof}
By replacing $a,b,c,f$ by $\frac{a}{m}, \frac{b}{m}, \frac{c}{m}, \frac{f}{m}$, we can assume that $m=1.$    Because $\gcd(b,c)=1$, we can find $t_1$ such that $c \mid a-bt_1.$ Let us write $a-bt_1 = a_1c.$ 
    We will look for $t = t_1 +c t_2$ such that the condition $\gcd(a-bt, f)=c$ holds. By our choices of $t_1, t_2$, this is equivalent to
    \[ c= \gcd(a-b(t_1+ct_2),f) = \gcd(a_1c - b t_2 c, f) = c \gcd(a_1-b t_2, \frac{f}{c}). \]
    We remark that the relation $a-bt_1 = a_1 c$ and the fact that $\gcd(a,b)=1$ imply that $\gcd(a_1, b)=1.$ By the Chinese remainder theorem, we can find $t_2$ such that  ${\gcd(a_1 - b t_2, \frac{f}{c})=1}$. 
    \end{proof}

\begin{lem} \label{lem:induced_graph_ideal}
Let $g$ be a divisor of $f$. Let $I_g$ be the ideal in $R=\F_q[x]/(f)$ generated by $g.$ The induced graph on $I_g$ is isomorphic to $G_{f/g}(D_g)$ where 
\[ D_g = \left \{\frac{f_i}{g} : f_i \in D, g \mid f_i \right \} .\]
\end{lem}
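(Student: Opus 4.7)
The plan is to exhibit an explicit graph isomorphism between $I_g$ (with the subgraph structure induced from $G_f(D)$) and $G_{f/g}(D_g)$.

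First, I would define the map
\[ \varphi\colon \F_q[x]/(f/g) \longrightarrow I_g, \qquad h \longmapsto gh \pmod f. \]
This is a well-known isomorphism of additive groups: if $gh_1 \equiv gh_2 \pmod f$, then $f \mid g(h_1-h_2)$, so $(f/g) \mid (h_1-h_2)$, showing $\varphi$ is well-defined and injective; surjectivity onto $I_g = gR$ is immediate. Thus $\varphi$ gives a bijection on vertex sets.

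Next, I would translate the adjacency condition along $\varphi$. Two vertices $gh_1, gh_2 \in I_g$ are adjacent in the induced subgraph of $G_f(D)$ iff $\gcd(gh_1-gh_2,f) \in D$. The key computation is the identity
\[ \gcd\bigl(g(h_1-h_2),\,f\bigr) \;=\; g\cdot\gcd\!\left(h_1-h_2,\,\tfrac{f}{g}\right), \]
which holds because $g$ divides both arguments (since $g\mid f$), so one can factor $g$ out from the gcd of monic polynomials. Combining these, $gh_1$ and $gh_2$ are adjacent in $G_f(D)$ iff $g\cdot \gcd(h_1-h_2,f/g) = f_i$ for some $f_i \in D$, which forces $g\mid f_i$ and $\gcd(h_1-h_2,f/g) = f_i/g \in D_g$. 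This is precisely the adjacency condition for $h_1,h_2$ in $G_{f/g}(D_g)$.

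Finally, I would make the small remark that $D_g$ really is a set of divisors of $f/g$: if $f_i\in D$ with $g\mid f_i$, then $f_i/g$ divides $f/g$. Assembling the three steps shows that $\varphi$ is an isomorphism between $G_{f/g}(D_g)$ and the induced subgraph of $G_f(D)$ on $I_g$. I expect no real obstacle here; the only point requiring a brief justification is the gcd-factorization identity, which is a standard property of monic-polynomial gcds in $\F_q[x]$ applied to the decomposition $f = g\cdot(f/g)$.
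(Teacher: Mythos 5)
Your proposal is correct and follows essentially the same route as the paper: both exhibit the additive bijection between $I_g$ and $\F_q[x]/(f/g)$ via $gm \leftrightarrow m$ and then reduce adjacency to the identity $\gcd(ga-gb,f)=g\gcd\bigl(a-b,\tfrac{f}{g}\bigr)$. Your version merely spells out well-definedness and the verification that $D_g\subseteq \Div(f/g)$ in slightly more detail, which is fine.
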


\begin{proof}
Every element in $I_g$ can be written in the form $gm$ for a unique $m \in \F_q[x]/(f/g)$.   Therefore we have a natural map $I_g \to \F_q[x]/(f/g)$ sending $gm \mapsto m.$ Furthermore,   for two elements $ga, gb \in I_g$, we have 
    \[ \gcd(ga-gb, f) = g \gcd(a-b, \tfrac{f}{g}). \]
    Therefore, we see that $\gcd(ga-gb, f) \in D$ if and only $\gcd(a-b, \frac{f}{g}) \in D_g.$ From this, we conclude that the induced graph on $I_g$ is naturally isomorphic to $G_{f/g}(D_g)$.
\end{proof}

\begin{thm} \label{thm:prime_criterior}
    Let $g \mid f$ be a divisor of $f$ and $I$ the ideal in $\F_q[x]/(f)$ generated by $g.$ Let 
    \[ D_1 = \left\{f_i \in D:  g \nmid f_i  \right \}, \quad  D_2 = \left\{f_i \in D : g \mid f_i \right \}.\]
    As in \cref{prop:projection_morphism} let 
    \[ \overline{D_1} = \left \{\gcd(f_i,g) : f_i \in D_1 \right \}, \quad \widetilde{D_2} = \left  \{\frac{f_i}{g} :  f_i \in D_2 \right \}.\]
    Then, the following statements are equivalent. 
    \begin{enumerate}
    
    \item $I$ is a homogeneous set in $G_f(D)$.
    \item $\Phi_{f,g}^{-1}(\overline{D_1}) \cap \Div(f)= D_1$ where $\Phi_{f,g}\colon \F_q[x]/(f) \to \F_q[x]/(g)$ is the canonical projection map. 
    \end{enumerate}
   
Furthermore, if the above equivalent conditions holds, $G_f(D)$ is isomorphic to the wreath product $G_{g}(\overline{D_1}) * G_{f/g}(\widetilde{D_2})$ {(see \cref{def:wreath_product} for the definition of wreath product)}.
\end{thm}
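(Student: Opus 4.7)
My plan is to first reduce the homogeneity of $I = gR$ to a statement about the set of attainable gcd values. Fix $v \notin I$, so $m := \gcd(v,g)$ is a proper divisor of $g$. By \cref{lem:gcd_lem}, for any $i = gt \in I$ we have $\gcd(\gcd(v - gt, f), g) = \gcd(v - gt, g) = \gcd(v, g) = m$, so every possible value of $\gcd(v-i,f)$ lies in $\Phi_{f,g}^{-1}(m) \cap \Div(f)$. Conversely, applying \cref{lem:elementary} with $a = v$, $b = g$, and any $c \in \Div(f)$ satisfying $\gcd(c,g) = m$ produces a $t$ with $\gcd(v - gt, f) = c$. Hence the attainable set of gcd values equals $\Phi_{f,g}^{-1}(m) \cap \Div(f)$ exactly; since $m \neq g$ none of these $c$'s are divisible by $g$, so lying in $D$ is equivalent to lying in $D_1$.

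This reformulates homogeneity of $I$ as the dichotomy: for every proper divisor $m$ of $g$, either $\Phi_{f,g}^{-1}(m) \cap \Div(f) \subseteq D_1$ or this fiber is disjoint from $D_1$. I would then prove that this dichotomy is equivalent to condition $(2)$. The inclusion $D_1 \subseteq \Phi_{f,g}^{-1}(\overline{D_1}) \cap \Div(f)$ is automatic, so condition $(2)$ amounts to the reverse inclusion. If $(1)$ holds and $m \in \overline{D_1}$, then the fiber $\Phi_{f,g}^{-1}(m) \cap \Div(f)$ meets $D_1$, so by the dichotomy the entire fiber is contained in $D_1$, which gives $(2)$. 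Conversely, $(2)$ immediately supplies both branches of the dichotomy (according to whether $m \in \overline{D_1}$ or not).

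For the wreath-product description, assume the equivalent conditions and fix the bijection $\psi\colon \F_q[x]/g \times \F_q[x]/(f/g) \to \F_q[x]/f$ sending $(\bar v, \bar a)$ to $v + g\tilde a \pmod f$, where $v$ and $\tilde a$ are the canonical polynomial lifts of degree less than $\deg g$ and $\deg (f/g)$, respectively. For two points in the same fiber over $\bar v$, \cref{lem:induced_graph_ideal} identifies the induced subgraph on $v + I$ with $G_{f/g}(\widetilde{D_2})$. For two points in distinct fibers, the first step shows that adjacency in $G_f(D)$ depends only on $\gcd(\bar v_1 - \bar v_2, g)$, and by condition $(2)$ this adjacency holds precisely when $\gcd(\bar v_1 - \bar v_2, g) \in \overline{D_1}$. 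This matches the wreath-product adjacency rule and yields the identification with $G_g(\overline{D_1}) \ast G_{f/g}(\widetilde{D_2})$.

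The main obstacle is the surjectivity half of the first step: showing that as $t$ ranges over $R$, every divisor $c$ of $f$ with $\gcd(c, g) = m$ actually arises as $\gcd(v - gt, f)$. This is exactly the content of \cref{lem:elementary}, whose proof uses the Chinese remainder theorem to find $t$ that simultaneously makes $c \mid v - gt$ and makes $(v - gt)/c$ coprime to $f/c$. Once this realization statement is in hand, the rest of the argument is largely bookkeeping on the bijection $\psi$ and a direct check of the wreath-product adjacency.
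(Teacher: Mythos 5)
Your proof of the equivalence $(1)\Leftrightarrow(2)$ is correct and is essentially the paper's argument in a slightly more structured form: both directions rest on exactly the same two ingredients, namely \cref{lem:gcd_lem} (to show $\gcd(v-gt,f)$ always lies in the fiber over $\gcd(v,g)$) and \cref{lem:elementary} (to show every element of that fiber is attained). Your repackaging as ``homogeneity of $I$ $\iff$ each fiber $\Phi_{f,g}^{-1}(m)\cap\Div(f)$ is either contained in or disjoint from $D_1$'' is a clean way to see why condition $(2)$ is the right one, whereas the paper argues the two implications separately with the same tools.

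The genuine added value of your write-up is the last part: the paper's proof stops after the equivalence and never verifies the wreath-product isomorphism, while you supply the bookkeeping (same coset of $I$ handled by \cref{lem:induced_graph_ideal} plus translation; distinct cosets handled by the fiber dichotomy). You also correctly identify the first factor as $G_g(\overline{D_1})$; this is consistent with the statement's $G_g(D_1)$ because under condition $(2)$ one has $\overline{D_1}=D_1\cap\Div(g)$, so the two generating sets in $\F_q[x]/g$ coincide, but your formulation is the one that literally typechecks. No gaps.
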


\begin{proof}
First, let us show that $(2)$ implies $(1)$. In other words, suppose that $g$ satisfies the condition that $\Phi_{f,g}^{-1}(\overline{D_1}) \cap \Div(f)= D_1$. We claim that the ideal $I$ generated by $g$ is a homogeneous set in $G_f(D).$ In fact, let $a \not \in I$ be an arbitrary element in $\F_q[x]/(f)$ and suppose that $a$ is adjacent to an element in $I$. By a translation, we can assume that $a$ is adjacent to $0$ in $G_f(D).$ We claim that $a$ is adjacent to all elements in $I$ as well. Let $gt$ be an element in $I$. We need to show that 
    \[ \gcd(a-gt, f) \in D.\]
Since $(a, 0) \in E(G_f(D))$ we know that $\gcd(a,f) \in D.$ Because $a \not \in I$, we know further that $\gcd(a,f) \in D_1$. Additionally, by \cref{lem:gcd_lem},  we must have ${\gcd(a,g) = \gcd(\gcd(a,f), g) \in \overline{D_1}}.$ 
Again, by \cref{lem:gcd_lem}, we have  
\[ \gcd(\gcd(a-gt, f),g) = \gcd(a-gt, g) =\gcd(a, g) \in \overline{D_1} .\]
Because $D_1 = \Phi_{f,g}^{-1}(\overline{D_1}) \cap \Div(f)$, this shows that $\gcd(a-gt, f) \in D_1$ and hence ${\gcd(a-gt, f) \in D}  $ as required.

Conversely, we claim that $(1)$ implies $(2)$. Suppose that $I$ is a homogeneous set in $G_f(D).$ We need to show that $D_1 = \Phi_{f,g}^{-1}(\overline{D_1}) \cap \Div(f)$. By \cref{prop:projection_morphism}, we always have $D_1 \subset \Phi_{f,g}^{-1}(D_1) \cap \Div(f)$. Therefore, it is sufficient to show that $\Phi_{f,g}^{-1}(\overline{D_1}) \cap \Div(f) \subset D_1.$ Let $h \in \Phi_{f,g}^{-1}(\overline{D_1}) \cap \Div(f)$. By definition, there exists $f_i \in D_1$ such that 
\[ \gcd(h,g) = \gcd(f_i, g).\]
By \cref{lem:elementary}, we can find $t \in \F_q[x]$ such that 
\[ \gcd(h-gt, f) = f_i.\]

This shows that $(h, gt) \in E(G_f(D)).$ Since $I$ is homogenous and $h \not \in I$, we conclude that $(h,0) \in E(G_f(D))$ as well. By definition,  $\gcd(h,f) \in D$. Since $h \mid f$, we conclude that $h \in D$ and hence $h \in D_1.$
\end{proof}

In general, it seems unclear how to check the conditions mentioned in \cref{thm:prime_criterior} explicitly. We discuss here a particular case when this can be done.

\begin{prop} \label{prop:sufficient_condition_prime}
   Let $f,g, I$ be as in \cref{thm:prime_criterior}. Assume further that $f_i \nmid f_j$ for all $i \neq j$ and $\gcd(f_1, f_2, \ldots, f_k)=1$. Then $I$ is a homogeneous set in $G_f(D)$ if and only if the following conditions hold.
   
   \begin{enumerate}
       \item For each $1 \leq i \leq k $,  $f_i \mid g$ for all $i$.
       \item    Furthermore, 
       \[ \rad \left(\frac{g}{f_i} \right) = \rad \left(\frac{f}{f_i} \right). \]
    \end{enumerate}
   In particular, if $f$ is squarefree then $f=g.$
   
\end{prop}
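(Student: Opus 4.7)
My plan is to unwind the criterion from \cref{thm:prime_criterior}: $I$ is homogeneous iff the only divisors $h$ of $f$ with $\gcd(h,g)\in\overline{D_1}$ are the elements of $D_1$. The combination of the antichain hypothesis and $\gcd(f_1,\ldots,f_k)=1$ forces $D_1$ and $\overline{D_1}$ into a very rigid shape, which is what lets us read off conditions (1) and (2). Throughout, I will work in the non-degenerate case $g\ne 1$; the case $g=1$ gives $I=R$, which is trivially homogeneous and can be handled separately.

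For the sufficiency direction $(1)\wedge(2)\Rightarrow I$ homogeneous, condition (1) gives $\gcd(f_i,g)=f_i$ for all $i$, and the antichain hypothesis then forces $D_2=\emptyset$ (since $f_i\mid g$ and $g\mid f_i$ would give $f_i=g$, hence $f_j\mid g=f_i$ for all $j$, contradicting antichain when $k\ge 2$). Thus $\overline{D_1}=D$, and the criterion reduces to showing that if $h\mid f$ and $\gcd(h,g)=f_i$, then $h=f_i$. Writing $h=f_ih'$ with $h'\mid f/f_i$ and $\gcd(h',g/f_i)=1$, condition (2) forces every prime factor of $f/f_i$ to divide $g/f_i$, so $h'=1$.

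The converse is the main work. Assume $I$ is homogeneous, hence the criterion holds. Fix $f_i\in D_1$ and let $m_i=\gcd(f_i,g)\in\overline{D_1}$. The element $m_i$ itself is a divisor of $f$ with $\gcd(m_i,g)=m_i\in\overline{D_1}$, so the criterion forces $m_i\in D_1\subseteq D$; thus $m_i=f_j$ for some $j$, and since $f_j\mid f_i$ the antichain hypothesis gives $j=i$, i.e.\ $f_i\mid g$. This proves (1) for $D_1$; for $D_2$, observe that the presence of some $f_j\in D_2$ together with any $f_i\in D_1$ forces $f_i\mid g\mid f_j$, contradicting antichain, so $D_1=\emptyset$, but then $g\mid \gcd(f_1,\ldots,f_k)=1$ puts us in the degenerate case. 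To derive (2), apply the criterion again to any $h=f_ih'$ with $h'\mid f/f_i$ and $\gcd(h',g/f_i)=1$; now $\gcd(h,g)=f_i\in\overline{D_1}$, so $h\in D$, so $h=f_j$ with $f_i\mid f_j$, forcing $j=i$ and $h'=1$. This gives $\rad(f/f_i)\mid\rad(g/f_i)$, and the reverse divisibility is automatic from $g\mid f$.

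Finally, if $f$ is squarefree, then $f/f_i$ is squarefree, so $\rad(f/f_i)=f/f_i$ and likewise $\rad(g/f_i)=g/f_i$; condition (2) then forces $g/f_i=f/f_i$ and hence $g=f$. The main obstacle I anticipate is cleanly tracking the interaction between $D_1$ and $D_2$ and the degenerate cases $g\in\{1,f\}$ when applying the criterion; the crucial observation that unlocks the antichain hypothesis is that $f_i\in D_1$ and $f_j\in D_2$ cannot coexist, since they would force the chain $f_i\mid g\mid f_j$.
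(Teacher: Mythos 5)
Your proof is correct and follows essentially the same route as the paper's: the forward direction applies the criterion of \cref{thm:prime_criterior} to $\gcd(f_i,g)$ itself (forcing $f_i\mid g$ via the antichain hypothesis), rules out $D_2$ by the chain $f_i\mid g\mid f_j$, and derives condition (2) by testing $h=f_i\pi$ for an irreducible $\pi$; the converse uses the same factorization $h=f_ih'$ with $\gcd(h',g/f_i)=1$ killed by the radical condition. Your explicit handling of the degenerate case $g=1$ is a small tidiness improvement over the paper but not a different argument.
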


\begin{proof}
Let us first assume that $I$ is homogenous.   Let $g_i = \gcd(f_i,g).$ We claim that if $f_i \in D_1$ then $g_i =f_i$ and hence $f_i \mid g.$ In fact, we have $\gcd(g_i, g) = g_i = \gcd(f_i,g).$ This shows that $g_i \in \Phi_{f,g}^{-1}(\overline{D_1}) \cap \Div(f) = D_1$ (by \cref{thm:prime_criterior}). Since $f_i \nmid f_j$ for all $i \neq j$, we must have $f_i = g_i$ and hence $f_i \mid g.$ By our assumption, $\gcd(f_1, f_2, \ldots, f_k)=1$, and hence we must have $D_1 \neq \emptyset$. If $D_2 \neq \emptyset$ then for each $f_1 \in D_1$ and $f_2 \in D_2$, we have $f_1 \mid g \mid f_2$ which is a contradiction. Therefore, we must have $D_2 = \emptyset.$ In summary, we just prove that $f_i \mid g$ for all $1 \leq i \leq k.$ We now show that 

   \[ \rad \left(\frac{g}{f_i} \right) = \rad \left(\frac{f}{f_i} \right). \]
Suppose this is not the case. We can find a non-constant irreducible polynomial $h$ of such that $\gcd(h,\frac{g}{f_i})=1$ and $h\mid \frac{f}{f_i}.$ We then see that $\gcd(hf_i, g)=f_i =\gcd(f_i,g).$ This implies that $hf_i \in D$. Since $f_i \mid hf_i$ and $f_i \neq hf_i$, this leads to a contradiction.

Let us now show the converse. By \cref{thm:prime_criterior}, we need to show that if ${h \in \Phi_{f,g}^{-1}(\overline{D_1}) \cap \Div(f)}$ then $h \in D_1.$ Since $h \in \Phi_{f,g}^{-1}(\overline{D_1})$ we can find $f_i \in D_1$ such that $\gcd(h,g) = \gcd(f_i,g)=f_i.$ Let us write $h=f_i h_1$ with $\gcd(h_1, \frac{g}{f_i})=1.$ By our assumption that $\rad \left(\frac{g}{f_i} \right) = \rad \left(\frac{f}{f_i} \right)$, we must have $\gcd(h_1, \frac{f}{f_i})=1$ as well. Since $h_1 \mid \frac{f}{f_i}$, we must have $h_1=1$ and hence $h=f_i.$
\end{proof}
\begin{cor} 
    Let that $f$ be a squarefree polynomial. Suppose that $G_{f}(\{1\})$ is connected. Let $D = \{f_1, f_2, \ldots, f_k\}$ be a subset of $\Div(f)$ such that the following conditions hold. 
    \begin{enumerate}
        \item $f_i \nmid f_j$ for all $i \neq j.$
        \item $G_{f}(D)$ is connected and anti-connected.
    \end{enumerate}
    
     Then $G_{f}(D)$ is prime.
\end{cor}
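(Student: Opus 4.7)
The plan is to chain together \cref{prop:prime_implies_ideal} and \cref{prop:sufficient_condition_prime}, using the squarefreeness of $f$ as the final obstruction. First I would record the standing assumptions: since $G_f(D)$ is connected, \cref{lem:connected} forces $\gcd(f_1,\ldots,f_k)=1$, so the hypotheses of \cref{prop:sufficient_condition_prime} are in place. Since $G_f(\{1\})$ is connected by assumption, \cref{prop:prime_implies_ideal} applies, so it suffices to show that no nontrivial ideal $I$ of $R=\F_q[x]/f$ is a homogeneous set in $G_f(D)$.

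Next I would parametrize the ideals of $R$. Because $f$ is squarefree, every ideal of $R$ has the form $I_g=(g)$ for a unique monic divisor $g$ of $f$, and $I_g$ is nontrivial precisely when $g\notin\{1,f\}$ (the cases $g=1$ and $g=f$ give $I=R$ and $I=0$ respectively). So assume for contradiction that some $I_g$ with $g$ a proper nontrivial divisor of $f$ is homogeneous in $G_f(D)$.

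Now apply \cref{prop:sufficient_condition_prime}: the conditions $f_i\nmid f_j$ for $i\ne j$ and $\gcd(f_1,\ldots,f_k)=1$ are in force, so homogeneity of $I_g$ forces $f_i\mid g$ for all $i$ and $\rad(g/f_i)=\rad(f/f_i)$ for all $i$. The last sentence of that proposition records the special feature of the squarefree case: these two conditions together imply $g=f$. This contradicts the assumption that $g$ is a proper divisor, so no nontrivial ideal of $R$ is homogeneous, and \cref{prop:prime_implies_ideal} yields that $G_f(D)$ is prime.

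There is no real obstacle here, since everything needed has already been proved in the preceding propositions; the corollary is essentially a clean application of \cref{prop:sufficient_condition_prime} in the squarefree regime. The only point requiring a line of care is checking that the hypotheses of \cref{prop:prime_implies_ideal} and \cref{prop:sufficient_condition_prime} are actually available, in particular that connectedness of $G_f(D)$ supplies $\gcd(f_1,\ldots,f_k)=1$, and that the anti-connectedness hypothesis is what legitimizes restricting attention to homogeneous sets that are ideals (since a disconnected complement would produce trivial homogeneous sets from connected components, which are already ruled out).
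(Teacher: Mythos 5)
Your proposal is correct and follows essentially the same route as the paper: assume non-primeness, invoke \cref{prop:prime_implies_ideal} to produce a homogeneous ideal $I_g$ with $g$ a proper divisor of $f$, and then use \cref{prop:sufficient_condition_prime} (whose hypotheses hold since connectedness gives $\gcd(f_1,\ldots,f_k)=1$) to force $g=f$ in the squarefree case, a contradiction. The extra care you take in verifying the hypotheses of the two cited propositions is implicit in the paper's shorter argument but matches it exactly.
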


\begin{proof}
   Suppose that $G_f(D)$ is not prime. By \cref{prop:prime_implies_ideal}, there exists a proper divisor $g \mid f$ such that the ideal $I$ generated by $g$ is homogeneous. By \cref{prop:sufficient_condition_prime}, we must have $g=f$ which is a contradiction. 
\end{proof}

\begin{rem}
    The proof of \cref{prop:sufficient_condition_prime} relies crucially on the divisibility relationship between $f_i$ and $f_j.$ It seems important to study this relationship systematically. {Since the completion of this paper, we have made some partial progress on this problem (see \cite{divisor_graphs}).}
\end{rem}

\begin{rem}
While we primarily focus on gcd-graphs over $\F_q[x]$ in this section, most statements have a straightforward analog for gcd-graphs over $\Z.$
    
\end{rem}
\section{Spectrum of gcd-graphs} \label{sec:spec}
The spectrum of gcd-graphs over $\Z$ is described by the theory of Ramanujan sums (see \cite[Section 4]{klotz2007some}), which in turn is a special case of Gauss sums (see \cite{lamprecht1953allgemeine}). As explained in \cite{chidambaram2023fekete}, these sums are precisely values of Fekete polynomials at certain $n$-roots of unity. One might wonder whether such a similar statement for the spectrum of gcd-graphs holds in the context of function fields. While we are not able to find an analog of Fekete polynomials over function fields, the theory of Ramanujan sums does have an interesting analogy as we will explain in this section. This, however, is sufficient for us to describe explicitly the spectrum of gcd-graphs over $\F_q[x].$

\subsection{Symmetric algebras}
A key point in the theory of $\Z/n\Z$-circulant graph is the fact that the character group of $\Z/n\Z$ is isomorphic to $\Z/n\Z$: 
\[ \Z/n\Z \cong \Hom(\Z/n\Z, \C^{\times}). \]
This isomorphism can be obtained as follows. Fix a primitive $n$-root of unity $\zeta_n$ in $\C$. Let $\chi_1\colon \Z/n\Z \to \C^{\times}$ be the character defined by $\chi_1(m) = \zeta_n^m$ for all $m \in \Z/n\Z.$ For each $a \in \Z/n\Z$, let $\chi_a = \chi_1^a$ be the character of $\Z/n\Z$ defined by $\chi_a(b) = \zeta_{n}^{ab} = \chi_1^a(b).$ The following proposition is standard. 
\begin{prop} \label{prop:character_z_n}
    The map $a \mapsto \chi_1^a$ gives an isomorphism between $\Z/n\Z$ and $\Hom(\Z/n\Z, \C^{\times}).$
\end{prop}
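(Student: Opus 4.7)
The plan is to verify directly that the map $\phi\colon \Z/n \to \Hom(\Z/n, \C^\times)$, $a \mapsto \chi_1^a$, is a well-defined group homomorphism and then show it is bijective.

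First, I would check well-definedness and the homomorphism property simultaneously. The assignment $a \mapsto \chi_1^a$ makes sense on $\Z/n$ because $\chi_1(b)^n = \zeta_n^{nb} = 1$, so $\chi_1^n$ is the trivial character. The identity $\chi_1^{a+a'}(b) = \zeta_n^{(a+a')b} = \zeta_n^{ab}\zeta_n^{a'b} = \chi_1^a(b)\chi_1^{a'}(b)$ shows that $\phi$ is a group homomorphism, where $\Hom(\Z/n, \C^\times)$ is equipped with pointwise multiplication.

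Next, I would establish injectivity. Suppose $\chi_1^a$ is the trivial character, so $\zeta_n^{a \cdot 1} = 1$. Since $\zeta_n$ is a primitive $n$-th root of unity, this forces $n \mid a$, i.e., $a = 0$ in $\Z/n$. Thus $\ker(\phi) = 0$.

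Finally, for surjectivity, I would argue that any character $\chi\colon \Z/n \to \C^\times$ is determined by $\chi(1)$, and since $\chi(1)^n = \chi(n) = \chi(0) = 1$, the value $\chi(1)$ is an $n$-th root of unity. Writing $\chi(1) = \zeta_n^a$ for some $a \in \Z/n$, we then have $\chi(b) = \chi(1)^b = \zeta_n^{ab} = \chi_1^a(b)$ for all $b$, whence $\chi = \phi(a)$. Alternatively, one can bypass the surjectivity argument by noting that $|\Hom(\Z/n,\C^\times)| = n$ since $\Z/n$ is a finite cyclic group, and then injectivity together with equality of cardinalities forces bijectivity. I do not anticipate any genuine obstacle here; the proof is a standard exercise in character theory of finite abelian groups, and the only mild care needed is the observation that $\zeta_n$ being primitive is exactly what guarantees injectivity.
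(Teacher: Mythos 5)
Your proof is correct and is exactly the standard argument; the paper itself offers no proof, simply remarking that the proposition is standard. Your verification (homomorphism property, injectivity from the primitivity of $\zeta_n$, and surjectivity either via $\chi$ being determined by $\chi(1)$ or via the cardinality count $|\Hom(\Z/n,\C^{\times})|=n$) fills that gap cleanly and needs no changes.
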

In summary, once we fix a primitive $n$-root of unity, the isomorphism ${\Z/n\Z \cong \Hom(\Z/n\Z, \C^{\times})}$ is obtained via the multiplicative structure on $\Z/n\Z$. We will use a similar approach in the function field case. We first recall the following definition. 

\begin{definition} (See \cite[Page 66-67]{lam2012lectures})
    Let $A$ be a finite dimensional commutative $\F_q$-algebra. $A$ is said to be a {\emph{symmetric}} $\F_q$-algebra if there exists an $\F_q$-linear functional $\lambda\colon A \to \F_q$ such that  the kernel of $\lambda$ contains no nonzero ideal of $A$. We call $\lambda$ a {\emph{non-degenerate linear functional}} on $A.$
\end{definition}

\begin{expl}
    Let $\F_{q^r}$ be a finite extension of $\F_q.$ Then, $\F_{q^r}$ equipped with the canonical  trace map $\Tr\colon \F_{q^r} \to \F_q$ is a symmetric $\F_q$-algebra. 
\end{expl}

The following lemma is rather standard.
\begin{lem} \label{lem:symmetric_F_q}
    Suppose that $A$ is a symmetric $\F_{q^r}$-algebra with a $\F_{q^r}$-linear functional $\lambda: A \to \F_{q^r}.$ Then $A$ is a symmetric $\F_q$-algebra where the linear functional is the composition of $\lambda$ and $\Tr\colon \F_{q^r} \to \F_q.$
\end{lem}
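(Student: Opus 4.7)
The plan is to verify that $\mu := \Tr \circ \lambda \colon A \to \F_q$ is an $\F_q$-linear functional on $A$ whose kernel contains no nonzero ideal of $A$. The $\F_q$-linearity is automatic: $\lambda$ is $\F_{q^r}$-linear and hence, a fortiori, $\F_q$-linear, while the trace $\Tr \colon \F_{q^r} \to \F_q$ is $\F_q$-linear by construction. So the entire content of the lemma is the non-degeneracy statement, which I would reduce to the non-degeneracy of $\lambda$ by exploiting the non-degeneracy of the trace pairing on the finite field extension.

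Concretely, I would take an arbitrary ideal $I$ of $A$ contained in $\ker \mu$ and try to conclude $I = 0$. Since $A$ is an $\F_{q^r}$-algebra, the subfield $\F_{q^r}$ embeds into $A$ via $c \mapsto c \cdot 1_A$; because $I$ is an ideal of the ring $A$, it is in particular stable under multiplication by this copy of $\F_{q^r}$, and so $I$ is an $\F_{q^r}$-subspace. Hence for any $x \in I$ and any $c \in \F_{q^r}$ we have $cx \in I$, so
\[
0 \;=\; \mu(cx) \;=\; \Tr\bigl(\lambda(cx)\bigr) \;=\; \Tr\bigl(c\,\lambda(x)\bigr).
\]
Fixing $x \in I$ and writing $y := \lambda(x) \in \F_{q^r}$, this gives $\Tr(cy) = 0$ for every $c \in \F_{q^r}$. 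Since the trace form $(a,b) \mapsto \Tr(ab)$ on $\F_{q^r}$ is non-degenerate (a standard fact for separable extensions of finite fields), this forces $y = 0$, i.e., $\lambda(x) = 0$. Thus $I \subseteq \ker \lambda$.

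The rest is immediate from the hypothesis: since $\lambda$ is a non-degenerate $\F_{q^r}$-linear functional on $A$ and $I$ is an ideal of $A$ lying in $\ker \lambda$, we must have $I = 0$. This establishes that $\ker \mu$ contains no nonzero ideal, so $\mu$ is a non-degenerate $\F_q$-linear functional and $A$ is a symmetric $\F_q$-algebra. There is no real obstacle here; the only point worth highlighting is the mild bookkeeping that ideals of the ring $A$ are automatically $\F_{q^r}$-subspaces, so that the scalar trick with $c \in \F_{q^r}$ is legitimate, and after that the non-degeneracy of the finite-field trace does all of the work.
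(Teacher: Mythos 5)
Your proof is correct: the paper omits the argument entirely, labelling the lemma ``rather standard,'' and your write-up supplies exactly the standard justification it is implicitly invoking --- namely that an ideal inside $\ker(\Tr\circ\lambda)$ is an $\F_{q^r}$-subspace, so the non-degeneracy of the trace form of $\F_{q^r}/\F_q$ pushes it into $\ker\lambda$, where the hypothesis on $\lambda$ kills it. Nothing further is needed.
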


\begin{prop} \label{prop:dual_A}
Let $A$ be a symmetric finite dimensional $\F_q$-algebra and $\lambda\colon A \to \F_q$ an associated non-degenerate $\F_q$-linear functional. For each $a \in A$, let $\lambda_a\colon A \to \F_q$ be the $\F_q$-linear map defined by $\lambda_a(b) = \lambda(ab).$  Let $\Phi$ be the map $A \to \Hom_{\F_q}(A, \F_q)$ sending $a \mapsto \lambda_a.$ Then $\Phi$ is an isomorphism. 
\end{prop}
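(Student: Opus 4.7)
The plan is to verify that $\Phi$ is an $\F_q$-linear map that is injective, and then conclude bijectivity from a dimension count, since both sides are $\F_q$-vector spaces of the same finite dimension $\dim_{\F_q}(A)$.

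First I would observe that $\Phi$ is $\F_q$-linear: for $a, a' \in A$ and $c \in \F_q$, the map $\lambda_{a + ca'}$ sends $b$ to $\lambda((a + ca')b) = \lambda(ab) + c\lambda(a'b) = \lambda_a(b) + c\lambda_{a'}(b)$, so $\Phi(a + ca') = \Phi(a) + c\Phi(a')$. This is essentially a direct consequence of the bilinearity of the multiplication in $A$ and the $\F_q$-linearity of $\lambda$.

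The key step is the injectivity of $\Phi$, which is where the non-degeneracy hypothesis on $\lambda$ enters. Suppose $\Phi(a) = 0$, i.e., $\lambda(ab) = 0$ for all $b \in A$. This means that the principal ideal $aA$ is contained in $\ker(\lambda)$. Since $A$ is commutative, $aA$ is genuinely an ideal of $A$, and by the defining property of a non-degenerate functional, $\ker(\lambda)$ contains no nonzero ideal of $A$. Hence $aA = 0$; taking $b = 1$ gives $a = 0$. Thus $\ker(\Phi) = 0$.

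Finally, since $A$ is a finite-dimensional $\F_q$-algebra, we have $\dim_{\F_q} A = \dim_{\F_q} \Hom_{\F_q}(A, \F_q)$, so the injective $\F_q$-linear map $\Phi$ must also be surjective, and thus an isomorphism. I do not anticipate any real obstacle: the only subtlety is making sure that $aA$ is interpreted as an ideal (which uses commutativity of $A$, satisfied by hypothesis) so that the non-degeneracy of $\lambda$ can be invoked; everything else is a formal verification.
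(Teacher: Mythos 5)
Your proposal is correct and follows exactly the paper's argument: injectivity of $\Phi$ from the non-degeneracy of $\lambda$ (via the observation that $aA$ is an ideal contained in $\ker(\lambda)$), followed by a dimension count to upgrade injectivity to bijectivity. The only difference is that you spell out the injectivity step in more detail than the paper does, which is a harmless elaboration rather than a change of method.
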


\begin{proof}
    Since $\lambda$ is non-degenerate, $\Phi$ is injective. Furthermore, because $A$ is finite dimensional over $\F_q$, $\dim(A) = \dim(\Hom_{\F_q}(A, \F_q)).$ Since $\Phi$ is $\F_q$-linear, it must be an isomorphism. 
\end{proof}

Fix a primitive $p$-root of unity $\zeta_p \in \C^{\times}$. Then, $\F_p$ is (non)-canonically isomorphic to the subgroup of $\C^{\times}$ generated by $\zeta_p$. If $A$ is an $\F_q$-algebra, then $(A,+)$ is a direct sum of several copies of $\F_p$. Therefore 
\[ \Hom((A,+), \C^{\times}) \cong \Hom_{\F_p}(A, \F_p).\]

By \cref{prop:dual_A} we have the following corollary, which is a direct analog of \cref{prop:character_z_n}.

\begin{cor} \label{cor:all_char_A}
    Let $A$ be a symmetric $\F_p$-algebra together with a non-degenerate functional $\lambda\colon A \to \F_p.$ For each $a \in A$, let $\lambda_a\colon A \to \C^{\times}$ defined by $\lambda_a(b) = \zeta_p^{\lambda(ab)}.$ Then $\lambda_a \in \Hom((A,+), \C^{\times}).$ Furthermore, the map $a \mapsto \lambda_a$ gives an isomorphism between $A$ and $\Hom((A,+), \C^{\times})$.
\end{cor}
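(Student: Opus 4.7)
The plan is straightforward. First I verify that each $\lambda_a$ actually lies in $\Hom((A,+),\C^\times)$, and then I identify the map $a\mapsto \lambda_a$ as the composition of the $\F_p$-linear isomorphism $\Phi$ from \cref{prop:dual_A} with a canonical bijection $\Hom_{\F_p}(A,\F_p)\xrightarrow{\sim} \Hom((A,+),\C^\times)$ induced by $\zeta_p$.

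For the homomorphism check, since $\lambda\colon A\to\F_p$ is additive and multiplication in $A$ distributes over addition, I would compute
\[
\lambda_a(b+c)=\zeta_p^{\lambda(a(b+c))}=\zeta_p^{\lambda(ab)+\lambda(ac)}=\zeta_p^{\lambda(ab)}\,\zeta_p^{\lambda(ac)}=\lambda_a(b)\,\lambda_a(c),
\]
where the next-to-last equality uses that $\zeta_p^x$ depends only on $x\bmod p$. This places $\lambda_a$ in $\Hom((A,+),\C^\times)$.

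For the isomorphism claim, \cref{prop:dual_A} applied with base field $\F_p$ provides the $\F_p$-linear isomorphism
\[
\Phi\colon A\xrightarrow{\;\sim\;} \Hom_{\F_p}(A,\F_p),\qquad a\mapsto \bigl(b\mapsto \lambda(ab)\bigr).
\]
I would then post-compose with the injective group homomorphism $\iota\colon(\F_p,+)\hookrightarrow\C^\times$, $x\mapsto\zeta_p^x$, to obtain $\iota_*\colon \Hom_{\F_p}(A,\F_p)\to\Hom((A,+),\C^\times)$. Since $(A,+)$ is an elementary abelian $p$-group, every character $(A,+)\to\C^\times$ takes values in the $p$-th roots of unity and therefore factors uniquely through $\iota$, so $\iota_*$ is a bijection (a quick cardinality count $|A|=|\Hom((A,+),\C^\times)|$ also settles surjectivity once injectivity is known). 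The composite $\iota_*\circ\Phi$ is by construction the map $a\mapsto\lambda_a$, which yields the claimed isomorphism.

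I do not anticipate any substantive obstacle: the entire content is to bootstrap \cref{prop:dual_A} from $\F_p$-valued functionals to $\C^\times$-valued characters via the exponential $x\mapsto \zeta_p^x$, and the only book-keeping is that $(A,+)$ has exponent $p$, which is automatic once $A$ is an $\F_p$-algebra.
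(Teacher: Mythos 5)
Your proposal is correct and follows essentially the same route as the paper: the paper likewise observes that $(A,+)$ is an elementary abelian $p$-group, so that fixing $\zeta_p$ identifies $\Hom((A,+),\C^{\times})$ with $\Hom_{\F_p}(A,\F_p)$, and then invokes \cref{prop:dual_A} with $q=p$. Your extra care in checking that $\lambda_a$ is a character and that $\iota_*$ is bijective only makes explicit what the paper leaves implicit.
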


We will now focus on the case $A=\F_q[x]/(f)$. We will show that it is a symmetric $\F_q$-algebra (and hence a symmetric $\F_p$-algebra as explained from \cref{lem:symmetric_F_q}). We will show this by constructing an explicit $\F_q$-linear functional on $A.$ We learned about this construction from \cite{kowalski2018exponential}.  Assume $\deg f=n$. Every element $g$ in $\F_q[x]/(f)$ can be written uniquely in the form 
\[ g = a_0(g)+a_1(g)x+\cdots+ a_{n-1}(g)x^{n-1} .\] 

We define $\psi\colon \F_q[x]/(f) \to \F_q$ by 
\[ \psi(g) = a_{n-1}(g).\]

\begin{prop}
    Suppose $g \in \F_q[x]/(f)$ such that $\psi(hg)=0$ for all $h \in \F_q[x]/(f).$ Then $g=0$ in $\F_q[x]/(f).$
\end{prop}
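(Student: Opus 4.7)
The plan is to prove the contrapositive by an explicit construction: given a nonzero $g$, I will exhibit an $h \in \F_q[x]/f$ with $\psi(hg) \neq 0$. The functional $\psi$ merely reads off the coefficient of $x^{n-1}$ (where $n = \deg f$) in the canonical degree-less-than-$n$ representative, so the natural idea is to multiply $g$ by an appropriate power of $x$ so that its leading term lands exactly in the $x^{n-1}$ slot, while keeping the total degree below $n$ so that no reduction modulo $f$ intervenes.

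More concretely, I would proceed as follows. Suppose $g \neq 0$ in $\F_q[x]/f$ and let $g = \sum_{i=0}^{m} b_i x^i$ be the unique representative of degree $< n$, where $m \leq n-1$ and $b_m \neq 0$. Set $h := x^{n-1-m}$, which is a well-defined element of $\F_q[x]/f$ since $n-1-m \leq n-1 < n$. Multiplying yields
\[
hg = \sum_{i=0}^{m} b_i\, x^{n-1-m+i},
\]
a polynomial whose highest-degree term is $b_m x^{n-1}$ and whose degree is exactly $n-1 < n$. Because $\deg(hg) < \deg f$, no reduction modulo $f$ is required, and this polynomial already is the canonical representative of $hg$ in $\F_q[x]/f$. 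Reading off the coefficient of $x^{n-1}$ gives $\psi(hg) = b_m \neq 0$, contradicting the hypothesis. Hence $g = 0$.

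There is no real obstacle here; the only mild subtlety is noticing that one must choose $h$ to be a pure power of $x$ of the precise exponent $n-1-m$ so that the product stays in the range where $\psi$ simply extracts the coefficient directly, without having to reason about the (possibly complicated) reduction of higher-degree monomials modulo $f$. This will finish the proof and, together with \cref{lem:symmetric_F_q} and \cref{prop:dual_A}, will establish that $\F_q[x]/f$ is a symmetric $\F_p$-algebra with non-degenerate functional $\psi$, which is exactly what is needed to apply \cref{cor:all_char_A} in the sequel.
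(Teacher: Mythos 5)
Your proof is correct and rests on the same key observation as the paper's: multiplying $g$ by a suitable power of $x$ shifts its top nonzero coefficient into the $x^{n-1}$ slot while keeping the degree below $n$, so no reduction modulo $f$ interferes and $\psi$ reads that coefficient off directly. The paper packages this as a top-down induction showing each coefficient $a_{n-k}(g)$ vanishes in turn, whereas you argue the contrapositive in one step by aiming $h = x^{n-1-m}$ at the leading coefficient; this is a mild streamlining, not a genuinely different route.
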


\begin{proof}
    Let us write $g = a_0(g)+a_1(g)x+\cdots+ a_{n-1}(g)x^{n-1}$. We will prove by induction that $a_{n-k}(g)=0$ for $1 \leq k \leq n.$
In fact, since $\psi(g)=0$, we know that $a_{n-1}(g)=0.$ Consequently, the statement is true for $k=1.$ Let us assume that it has been shown for all $1 \leq k \leq m <n$. We claim that it is also true for $m+1$, namely $a_{n-m-1}(g)=0$ as well. In fact, we have 
\begin{equation*}
\begin{split} 
x^{m}g &= x^{m}a_0(g)+\cdots+ a_{n-m-1}(g)x^{n-1} + a_{n-m}(g)x^{n} + \cdots+a_{n-1}(g)x^{m+n-1} \\
 &= x^{m}a_0(g)+\cdots+ a_{n-m-1}(g)x^{n-1}.
\end{split}
\end{equation*}
Here, the last equation follows from the fact that $a_{n-m}=a_{n-m+1}= \ldots = a_{n-1}=0$ by the induction hypothesis. Consequently 
\[ a_{n-m-1}(g) = \psi(x^{m} g) =0.\]
By the induction principle, we conclude that $a_{n-k}=0$ for all $1 \leq k \leq n.$
    \end{proof}

\begin{cor}
    $\psi$ is a non-degenerate $\F_q$-linear functional on $\F_q[x]/(f).$ Consequently, $\F_q[x]/(f)$ is a symmetric $\F_q$-algebra. 
\end{cor}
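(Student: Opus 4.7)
The plan is to derive this corollary essentially as a direct consequence of the immediately preceding proposition, so most of the work has already been done; what remains is to reconcile the two possible formulations of ``non-degeneracy.'' Recall that the definition of symmetric algebra requires an $\F_q$-linear functional $\lambda\colon A\to \F_q$ whose kernel contains no nonzero ideal of $A$. The preceding proposition, on the other hand, is phrased in the form: if $g\in A$ satisfies $\psi(hg)=0$ for every $h\in A$, then $g=0$. So the main step is to show that these two conditions are equivalent for our particular $\psi$.

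First I would check $\F_q$-linearity of $\psi$, which is immediate from the definition since $\psi$ reads off the coefficient of $x^{n-1}$ in the unique representative of degree $<n$, and this coefficient is $\F_q$-linear in $g$. Then I would prove non-degeneracy in the ideal-theoretic sense. Suppose $I\subseteq \ker(\psi)$ is an ideal of $R = \F_q[x]/f$, and let $g\in I$. For every $h\in R$ we have $hg\in I\subseteq \ker(\psi)$, hence $\psi(hg)=0$. By the preceding proposition applied to $g$, this forces $g=0$. Therefore $I=0$, so $\ker(\psi)$ contains no nonzero ideal, which is exactly the definition of $\psi$ being a non-degenerate $\F_q$-linear functional and of $R$ being a symmetric $\F_q$-algebra.

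There is really no obstacle here — the substantive input (the ``pairing is non-degenerate'' statement) was already extracted in the preceding proposition via an inductive argument on the coefficients $a_{n-k}(g)$. The corollary is just the translation of that content into the language of \cref{lem:symmetric_F_q}'s definition. If anything, the only cosmetic point worth spelling out is why ``$\psi(hg)=0$ for all $h$ implies $g=0$'' is equivalent to ``$\ker\psi$ contains no nonzero ideal'': the forward direction is the argument above, and the reverse follows from noting that the annihilator $\{g\in R : \psi(hg)=0\ \forall h\in R\}$ is itself an ideal contained in $\ker\psi$. Once this equivalence is recorded, the corollary follows in one line.
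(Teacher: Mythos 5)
Your proposal is correct and matches the paper's (implicit) argument: the paper states this corollary without proof as an immediate consequence of the preceding proposition, and your deduction --- that any ideal $I\subseteq\ker\psi$ consists of elements $g$ with $\psi(hg)=0$ for all $h$, hence $I=0$ --- is exactly the intended one-line justification. Spelling out the equivalence between the two formulations of non-degeneracy is a reasonable (if optional) addition.
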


By \cref{lem:symmetric_F_q}, under the composition $\F_q[x]/(f) \xrightarrow{\psi} \F_q \xrightarrow{\Tr} \F_p$ where $\text{Tr}$ is the trace map,  $\F_q[x]/(f)$ becomes a symmetric $\F_p$-algebra.  By \cref{cor:all_char_A}, we have the following proposition.

\begin{prop} \label{prop:bijection_characters}

There exists a bijection 
\[ \F_q[x]/(f) \longleftrightarrow  \Hom((\F_q[x]/(f),+), \C^{\times}), \quad a \longleftrightarrow \{\psi_a\},\]
where 
\[ \psi_a\colon \F_q[x]/(f) \to \C^{\times}\] is given by 
\[ \psi_a(b) = \zeta_p^{\Tr(\psi(ab))}, \forall b \in \F_q[x]/(f).\]
\end{prop}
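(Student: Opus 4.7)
The plan is to assemble the pieces already established in this subsection. The preceding corollary shows that $\psi\colon \F_q[x]/f\to \F_q$ is a non-degenerate $\F_q$-linear functional, so $\F_q[x]/f$ is a symmetric $\F_q$-algebra. To extract complex characters of $(\F_q[x]/f,+)$ from this data, I need a functional valued in the prime field $\F_p$, since a primitive $p$-th root of unity is what gives a canonical additive embedding $\F_p \hookrightarrow \C^\times$; this is the reason the statement involves the trace.

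First I would apply \cref{lem:symmetric_F_q} to descend scalars from $\F_q$ to $\F_p$: the composition
\[
\lambda := \Tr\circ\psi\colon \F_q[x]/f \xrightarrow{\psi} \F_q \xrightarrow{\Tr} \F_p
\]
is a non-degenerate $\F_p$-linear functional on $\F_q[x]/f$, so $\F_q[x]/f$ becomes a symmetric $\F_p$-algebra with distinguished functional $\lambda$. Next I would invoke \cref{cor:all_char_A} with $A = \F_q[x]/f$ and with this specific $\lambda$. That corollary produces, for each $a \in A$, a character $\lambda_a\colon A\to \C^\times$ by $\lambda_a(b) = \zeta_p^{\lambda(ab)}$, and asserts that $a\mapsto \lambda_a$ is a bijection between $A$ and $\Hom((A,+),\C^\times)$. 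Finally, unwinding the definition of $\lambda$ gives
\[
\lambda_a(b) = \zeta_p^{\Tr(\psi(ab))} = \psi_a(b),
\]
which is precisely the character appearing in the statement.

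There is no genuine obstacle to overcome here; the argument is purely a matter of invoking the correct specializations of earlier results in the correct order. The only point that merits emphasis is why passing to the trace preserves non-degeneracy, and this is exactly the content of \cref{lem:symmetric_F_q}, whose proof relies on the surjectivity of $\Tr\colon\F_q\to\F_p$ combined with the non-degeneracy of $\psi$ established immediately above. Once that descent is in hand, \cref{cor:all_char_A} does all the remaining work, and the formula for $\psi_a$ falls out by direct substitution.
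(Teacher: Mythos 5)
Your proposal is correct and matches the paper's own argument exactly: the paper likewise observes that $\Tr\circ\psi$ makes $\F_q[x]/f$ a symmetric $\F_p$-algebra via \cref{lem:symmetric_F_q} and then deduces the proposition directly from \cref{cor:all_char_A}. Nothing further is needed.
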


We remark that over $\Z$, if $\zeta_n$ is a primitive $n$th-root of unity, then for each divisor $m \mid n$, $\zeta_n^{\frac{n}{m}}$ is a primitive $m$-root of unity.  An analogous statement holds for $\F_q[x]$ as well. 

\begin{prop} \label{prop:induced_primitive}

Let $f \in \F_q[x]$ and $\psi\colon \F_q[x]/(f) \to \F_q$ be a non-degenerate linear functional. Let $g$ be a divisor of $f$ and $\psi_g\colon \F_q[x]/g \to \F_q$ be the function defined by 
\[ \psi_g(a)  = \psi \left(\frac{f}{g} a \right) .\]
Then $\psi_g$ is a non-degenerate linear functional on $\F_q[x]/g.$
\end{prop}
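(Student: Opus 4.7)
The plan is to verify the two halves of the claim separately: first that $\psi_g$ is well-defined and $\F_q$-linear, and then that its kernel contains no nonzero ideal of $\F_q[x]/g$. The linearity half is routine: the map $a \mapsto \frac{f}{g}\,a$ is a well-defined $\F_q$-linear endomorphism of $\F_q[x]/f$ on representatives (if $a\equiv a'\pmod g$ then $\frac{f}{g}(a-a')$ is divisible by $f$), and $\psi_g$ is its composition with $\psi$.

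The substantive half is non-degeneracy. Assume $a\in\F_q[x]/g$ satisfies $\psi_g(ab)=0$ for every $b\in\F_q[x]/g$; I want to conclude $a\equiv 0\pmod g$. The first key step will be to translate this hypothesis from $\F_q[x]/g$ up to $\F_q[x]/f$. Multiplication by $\frac{f}{g}$ gives an $\F_q$-linear embedding $\iota\colon\F_q[x]/g\hookrightarrow\F_q[x]/f$ whose image is the principal ideal $J:=\tfrac{f}{g}\cdot(\F_q[x]/f)$ (the kernel on $\F_q[x]$ is exactly $g\cdot\F_q[x]$, so the map is injective on $\F_q[x]/g$). Given any $c\in\F_q[x]/f$, we have $\tfrac{f}{g}\,a\,c=\tfrac{f}{g}\cdot(ac\bmod g)$ in $\F_q[x]/f$, since the difference lies in $f\cdot\F_q[x]/f$. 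Hence
\[
\psi\!\left(\tfrac{f}{g}\,a\,c\right)=\psi\!\left(\tfrac{f}{g}\cdot(a(c\bmod g))\right)=\psi_g\!\left(a\cdot(c\bmod g)\right).
\]
As $c$ ranges over $\F_q[x]/f$, the reduction $c\bmod g$ ranges over all of $\F_q[x]/g$, so the hypothesis on $\psi_g$ is equivalent to $\psi$ vanishing on the whole ideal $I_a:=\tfrac{f}{g}\,a\cdot(\F_q[x]/f)$ of $\F_q[x]/f$.

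At this point the non-degeneracy of $\psi$ on the symmetric $\F_q$-algebra $\F_q[x]/f$ does the work: since $\ker\psi$ contains no nonzero ideal, we must have $I_a=0$, i.e.\ $\tfrac{f}{g}\,a\equiv 0\pmod f$, which forces $g\mid a$ in $\F_q[x]$ and hence $a=0$ in $\F_q[x]/g$, as desired.

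The only step where care is required is the identification of $\iota(\F_q[x]/g)$ with the ideal $J$ and the reduction argument that turns $\psi_g(ab)=0$ for all $b\in\F_q[x]/g$ into $\psi$ vanishing on the full ideal $I_a\subseteq\F_q[x]/f$; without that reduction one only gets vanishing on the $\F_q$-subspace $\iota(a\cdot\F_q[x]/g)$, which is not an ideal of $\F_q[x]/f$ in general and hence cannot be fed directly into the non-degeneracy of $\psi$. Everything else is bookkeeping.
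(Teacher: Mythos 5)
Your proof is correct and follows essentially the same route as the paper: both arguments push a putative nonzero ideal in $\ker\psi_g$ forward to the nonzero ideal $\tfrac{f}{g}\cdot(\cdot)$ of $\F_q[x]/f$ inside $\ker\psi$ and then invoke the non-degeneracy of $\psi$. The only cosmetic difference is that the paper writes the offending ideal as $\langle h\rangle$ with $h\mid g$ using that $\F_q[x]$ is a PID, whereas you use the equivalent elementwise formulation ($\psi_g(ab)=0$ for all $b$ implies $a=0$) and are more explicit about why the image is genuinely an ideal of $\F_q[x]/f$ rather than just a subspace.
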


\begin{proof}
    It is clear from the definition that $\psi_g$ is $\F_q$-linear. We only need to show that it is non-degenerate. In fact, suppose to the contrary that the kernel of $\psi_g$ contains a non-zero ideal $I$ in $\F_q[x]/g.$ Since $\F_q[x]$ is a PID, $I$ must be of the form $I = \langle h \rangle$ for some $h\mid g.$ We then see that $\langle h \frac{f}{g} \rangle$ belongs to the kernel of $\psi.$ Because $\psi$ is non-degenerate, this implies that $h \frac{f}{g} = 0$ in $\F_q[x]/(f).$ In other words, $g\mid h$ or equivalently $h=0$ in $\F_q[x]/g.$ This shows that $I=0$, which is a contradiction. 
\end{proof}

\subsection{Ramanujan sums over $\F_q[x]$}

We now introduce the definition of Ramanujan sum over $\F_q[x].$

\begin{definition}(Ramanujan sums over $\F_q[x]$) \label{def:ramanujan_sum}
    Let $f,g \in \F_q[x]$ be polynomials {such that $f$ is monic}. Let $\psi\colon \F_q[x]/(f) \to \F_q$ be a non-degenerate linear functional on $\F_q[x]/(f)$. The {\emph{Ramanujan sum}} $c(g,f)$ is defined as 
    \[ c(g,f) = c_{\psi}(g,f) = \sum_{a \in (\F_q[x]/(f))^{\times}} \zeta_p^{\Tr(\psi(ga))}. \]
\end{definition}

\begin{rem}
We remark also that at first glance, $c(g,f)$ depends on the choice of $\psi.$ However, as we explain in what follows, it does not depend on the choice of $\psi$ as long as we make sure that $\psi$ is non-degenerate. This is similar to the case over $\Z$: Ramanujan sums do not depend on the choice of a primitive $n$th-root of unity. In fact, we will show that there is an explicit formula for $c(g,f)$ similar to the case of Ramanujan sum over $\Z.$
\end{rem}

\begin{rem}
    Ramanujan sums are a special case of Gauss sums as defined and studied in \cite[Definition 1]{lamprecht1953allgemeine}. In fact, they are Gauss sums for the principal Dirichlet characters on $\F_q[x]/(f).$ It would be rather interesting if we could define Fekete polynomials for these principal Dirichlet characters (the case over $\Z$ was studied in \cite{chidambaram2023fekete}). 
\end{rem}

We first recall the following standard lemma in group theory. 
\begin{lem} \label{lem:sum_of_chars}

Let $G$ be a finite group and let $\chi\colon G \to \C^{\times}$ be a non-trivial character. Then 
\[ \sum_{g \in G} \chi(g)= 0.\]
\end{lem}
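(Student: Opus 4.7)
The plan is to use the classical orthogonality-style trick: exploit the fact that left multiplication by a fixed group element permutes $G$, so the sum $\sum_{g \in G} \chi(g)$ is invariant under the scalar action of $\chi(h)$ for any $h \in G$.

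First I would fix a group element $h \in G$ such that $\chi(h) \neq 1$; such an $h$ exists precisely because $\chi$ is non-trivial. Setting $S = \sum_{g \in G} \chi(g)$, I would then compute
\[
\chi(h) \cdot S = \sum_{g \in G} \chi(h)\chi(g) = \sum_{g \in G} \chi(hg).
\]
Here the multiplicativity of $\chi$ (which comes for free since $\chi$ is a group homomorphism into $\C^\times$) is used in the second equality.

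Next I would observe that the map $g \mapsto hg$ is a bijection of $G$ onto itself, so the sum $\sum_{g \in G} \chi(hg)$ is just a reindexing of $S$. Thus $\chi(h) S = S$, which rearranges to $(\chi(h)-1)S = 0$. Since $\chi(h)-1 \neq 0$ in $\C$ by the choice of $h$, we may divide, giving $S = 0$.

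There is really no main obstacle here; the only thing to be careful about is the very first step, namely producing an element $h$ with $\chi(h)\neq 1$. This is immediate from the definition of non-triviality of $\chi$ (there must be some element not sent to $1$), and once that is in hand the rest of the argument is a one-line manipulation. No additional lemmas from the paper are needed.
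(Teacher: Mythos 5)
Your proposal is correct and follows exactly the same argument as the paper: choose $h$ with $\chi(h)\neq 1$, reindex the sum via the bijection $g\mapsto hg$ to obtain $\chi(h)S=S$, and conclude $S=0$. No differences worth noting.
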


\begin{proof}
    Since $\chi$ is non-trivial, there exists $h \in G$ such that $\chi(h) \neq 1.$ We then have 
    \[ \sum_{g \in G} \chi(g) = \sum_{h \in G} \chi(hg) = \chi(h) \sum_{g \in G} \chi(g).\]
    Because $\chi(h) \neq 1$, we must have $\sum_{g \in G} \chi(g)=0.$
\end{proof}

{The  {M{\"o}bius function} $\mu$ on $\F_q[x]$ is defined  in the same way as in the integer case, which we recall as follows: for a non-zero polynomial $f\in \F_q[x]$,
\[
\mu(f)=\begin{cases}
    (-1)^r &\text{if $f=a P_1\cdots P_r$,   }\\
    0 & \text{ if $f$ is not square-free.}
\end{cases}
\]
Here $a$ is a non-zero constant and $P_1,\ldots,P_r$ are distinct monic irreducible polynomials dividing $f$ ($r$ could be $0$). 
}

\begin{prop} \label{prop:ramanujan_sum_1}
Let $\psi$ be a non-degenerate linear functional on $\F_q[x]/(f)$. For each $f \in \F_q[x]$
\[ c(1,f) = c_{\psi}(1,f)= \mu(f), \]
where $\mu$ is the M{\"o}bius function on $\F_q[x]$. 
\end{prop}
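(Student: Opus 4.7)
The plan is to mimic the classical proof over $\Z$, whose main ingredients are the orthogonality relation for characters of a finite abelian group, the partition of $\F_q[x]/f$ according to $\gcd$ with $f$, and M\"{o}bius inversion on $\F_q[x]$. The whole argument should go through using \cref{prop:induced_primitive} to pass $\psi$ down to quotients of the form $\F_q[x]/(f/g)$.

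First, I would apply \cref{cor:all_char_A} and \cref{prop:bijection_characters} to observe that the character $\psi_1 \colon \F_q[x]/f \to \C^\times$, $b \mapsto \zeta_p^{\Tr(\psi(b))}$, is non-trivial whenever $\deg f \geq 1$ (since $\psi$ is non-degenerate, $\psi_1$ corresponds to $1 \neq 0$ in $\F_q[x]/f$). Thus by \cref{lem:sum_of_chars},
\[
\sum_{a \in \F_q[x]/f} \zeta_p^{\Tr(\psi(a))} = 0 \quad \text{for } f \neq 1.
\]
Next, I would partition this sum according to $g = \gcd(a,f)$: writing $a = gb$ with $b \in (\F_q[x]/(f/g))^\times$ yields
\[
\sum_{g \mid f} \ \sum_{\substack{b \in \F_q[x]/(f/g) \\ \gcd(b, f/g) = 1}} \zeta_p^{\Tr(\psi(gb))} = 0.
\]
The inner sum is a Ramanujan-type sum on $\F_q[x]/(f/g)$ with respect to the functional $\psi^{(g)}(b) := \psi(gb)$, which is the non-degenerate linear functional on $\F_q[x]/(f/g)$ provided by \cref{prop:induced_primitive} (applied with $f/g$ playing the role of $g$). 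So the identity becomes
\[
\sum_{g \mid f} c_{\psi^{(g)}}(1, f/g) = [f = 1].
\]

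Finally, I would conclude by induction on $\deg f$ (with the base case $f = 1$ trivial). Re-indexing the sum by $h = f/g$ and applying the inductive hypothesis $c_{\psi^{(g)}}(1, h) = \mu(h)$ for $\deg h < \deg f$ gives
\[
c_{\psi}(1,f) = -\sum_{\substack{h \mid f \\ h \neq f}} \mu(h) = \mu(f),
\]
where the last equality uses the standard identity $\sum_{h \mid f}\mu(h) = [f=1]$ for the M\"{o}bius function on $\F_q[x]$. This also shows a posteriori that $c_\psi(1,f)$ is independent of the choice of non-degenerate $\psi$, as advertised in the remark following \cref{def:ramanujan_sum}.

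The step I expect to be the main (if modest) obstacle is verifying cleanly that $\psi^{(g)}(b) = \psi(gb)$ is the correct non-degenerate functional to produce the Ramanujan sum $c_{\psi^{(g)}}(1, f/g)$ on the quotient, i.e., that the identification $a \leftrightarrow (g, b)$ is compatible with the exponent $\Tr \circ \psi$ under the isomorphism induced by multiplication by $g$. Once this bookkeeping is in place, the rest is a direct translation of the classical argument.
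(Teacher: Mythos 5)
Your proof is correct, but it takes a different route from the paper's. The paper first invokes multiplicativity of Gauss sums under the Chinese remainder theorem (citing Lamprecht's Satz 1) to reduce to the prime-power case $f = f_1^{a_1}$, and then computes $c_\psi(1,f)$ directly as the full character sum minus the sum over multiples of $f_1$, each piece handled by \cref{lem:sum_of_chars} and \cref{prop:induced_primitive}. You instead partition the vanishing full character sum $\sum_{a \in \F_q[x]/f}\zeta_p^{\Tr(\psi(a))}=0$ by $\gcd(a,f)$ to obtain the convolution identity $\sum_{g\mid f} c_{\psi^{(g)}}(1,f/g)=[f=1]$, and then conclude by induction on $\deg f$ together with $\sum_{h\mid f}\mu(h)=[f=1]$. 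Both are faithful transplants of classical arguments for $c_n(1)=\mu(n)$ over $\Z$. Your version has the advantage of being self-contained — it avoids the CRT reduction and the external appeal to Lamprecht's multiplicativity theorem, needing only \cref{lem:sum_of_chars}, \cref{prop:induced_primitive}, and the standard M\"obius identity over $\F_q[x]$ — at the cost of carrying the induction over all non-degenerate functionals simultaneously (which is harmless, since the statement is already quantified over all such $\psi$). The one bookkeeping point you flag, that $\psi^{(g)}(b)=\psi(gb)$ is well defined on $\F_q[x]/(f/g)$ and non-degenerate, is exactly the content of \cref{prop:induced_primitive} with $f/g$ in the role of $g$ there, so it is already in place.
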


\begin{proof}
    If $f = \prod_{i=1}^d f_i^{\alpha_i}$ where $\alpha_i \in \N$ and $f_i$ are irreducible, then by the Chinese remainder theorem and \cite[Satz 1]{lamprecht1953allgemeine} we have 
    \[ c_{\psi}(1,f) = \prod_{i=1}^d c_{\psi_i}(1,f_i^{\alpha_i}),\]
    where $\psi_i\colon \F_q[x]/(f)_i^{\alpha_i} \to \F_q$ is a non-degenerate linear functional induced by $\psi.$
    
    Therefore, it is enough to consider the case $f = f_1^{a_1}$ where $f_1$ is irreducible. We have 
    \[\begin{aligned}
    c_{\psi}(1,f) &= \sum_{a \in (\F_q[x]/f)^{\times}} \zeta_p^{\Tr(\psi(a))}  
       = \sum_{a \in \F_q[x]/f} \zeta_p^{\Tr(\psi(a))} - \sum_{a \in \F_q[x]/f_1^{a_1-1}} \zeta_p^{\Tr(\psi(f_1 a))}.      
    \end{aligned} 
    \]
By \cref{lem:sum_of_chars}
\[ \sum_{a \in \F_q[x]/f} \zeta_p^{\Tr(\psi(a))} = 0.\]
Similarly, if $a_1 \geq 2$ then by \cref{lem:sum_of_chars} and \cref{prop:induced_primitive}
\[ \sum_{a \in \F_q[x]/(f_1^{a_1-1})} \zeta_p^{\Tr(\psi(f_1 a))}  = 0 .\] 
On the other hand if $a_1=1$ then $\sum_{a \in \F_q[x]/(f_1^{a_1-1})} \zeta_p^{\Tr(\psi(f_1 a))} =1$. We conclude that 
\[ c(1,f) = c_{\psi}(1,f) = c_{\psi}(1, f_{1}^{a_1}) = \mu(f_1^{a_1}) = \mu(f).
\qedhere\]
\end{proof}

For general $g$, we have the following theorem.

\begin{thm}
    Let $f,g \in \F_q[x]$ be polynomials such that $f$ is monic. Then (compare with \cite[Equation 9]{klotz2007some})
    \[ c(g,f)  =  \mu(t) \dfrac{\varphi(f)}{\varphi(t)}, \quad \text{where} \quad t = \dfrac{f}{\gcd(f,g)}. \] 
    Here $\varphi$ is the Euler totient function on $\F_q[x]$; i.e, $\varphi(h) = |(\F_q[x]/(h))^{\times}|.$ {In particular, $c(g,f)$ is an integer.}
\end{thm}
\begin{proof} Let $\psi\colon \F_q[x]/(f) \to \F_q$ be a non-degenerate linear functional and 
\[c(g,f)= c_{\psi}(g,f).\]
Let $h=\gcd(g,f)$ and $t=\dfrac{f}{h}$. The canonical projection map 
\[ {(\F_q[x]/(f))^{\times} \to (\F_q[x]/(t))^{\times}} ,\] 
has kernel of size $\frac{\varphi(f)}{\varphi(t)}.$ Furthermore, for each $a \in (\F_q[x]/(t))^{\times}$, if $a_1, a_2$ are two preimages of $a$ in $(\F_q[x]/(f))^{\times}$  then $a_2=a_1+tu$, for some $u\in \F_q[x]/(f)$. In this case, one has 
\[\psi(ga_2)= \psi(ag_1+gtu)=\psi(ga_1)+\psi(\frac{gu}{h}f)=\psi(ga_1).\] 
Consequently, we have 
\[
\begin{aligned}
c_{\psi}(g,f) &= \sum_{a \in (\F_q[x]/(f))^{\times}} \zeta_p^{\Tr(\psi(ga))} = \frac{\varphi(f)}{\varphi(t)}\sum_{a \in (\F_q[x]/(t))^{\times}} \zeta_p^{\Tr(\psi(ga))}
\\ 
&=  \frac{\varphi(f)}{\varphi(t)}\sum_{a \in (\F_q[x]/(t))^{\times}} \zeta_p^{\Tr(\psi(\frac{f}{t}\frac{g}{h}a))}= \frac{\varphi(f)}{\varphi(t)}\sum_{a \in (\F_q[x]/(t))^{\times}} \zeta_p^{\Tr(\psi_t(\frac{g}{h}a))}.
\end{aligned}\] 
Here $\psi_t\colon \F_q[x]/(t) \to \F_q$ is the linear functional defined by $\psi_t(b) = \psi(\frac{f}{t}b)=\psi(hb)$. Note that $\psi_t$ is non-degenerate.

Since $g/h \in (\F_q[x]/(t))^{\times}$, we see that 
\[ \sum_{a \in (\F_q[x]/(t))^{\times}} \zeta_p^{\Tr(\psi_t(\frac{g}{h}a))} =\sum_{b \in (\F_q[x]/(t))^{\times}} \zeta_p^{\Tr(\psi_t(t))} = c_{\psi_t}(1,t) = \mu(t).\]
Here the last equality follows from   \cref{prop:ramanujan_sum_1}.
We conclude that 
\[ c(g,f)= c_{\psi}(g,f)  = \frac{\varphi(f)}{\varphi(t)} \mu(t). \qedhere\]
\end{proof}

\subsection{Spectrum of $G_{f}(D)$}
Let $G$ be an abelian group and $S$ a symmetric subset of $G$ such that $0 \not \in S$. The adjacency matrix of the Cayley graph $\Gamma(G,S)$ is a $G$-circulant matrix. By the $G$-circulant theorem, the spectrum of $\Gamma(G,S)$ is the collection of the sums  $\sum_{s \in S} \chi(s)$, where $\chi$ runs over the set of all characters of $G$ (see \cite[Section 1.2, {Equation (1.1')}]{kanemitsu2013matrices}). When $G=\F_q[x]/(f)$, we know by \cref{prop:bijection_characters} that the character of $G$ is parameterized by $G$ itself. Consequently, we have the following proposition. 

\begin{prop} \label{prop:spectrum_cayley}
    Let $S$ be a subset of $\F_q[x].$ Then the {eigenvalues} of the Cayley graph $\Gamma(\F_q[x]/(f), S)$ are given by the set 
    \[ \left \{  \sum_{s \in S} \zeta_p^{\Tr(\psi(gs))}\right\}_{g \in \F_q[x]/(f)}.\]
\end{prop}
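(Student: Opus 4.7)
The plan is to invoke the standard spectral theorem for Cayley graphs on finite abelian groups, together with the explicit parameterization of characters of $(\F_q[x]/f,+)$ that we developed earlier. This is essentially an immediate substitution, so no delicate argument is required.

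First, I would recall the general fact, already cited in the paragraph preceding the proposition: for any finite abelian group $G$ and any symmetric subset $S \subseteq G$ with $0 \notin S$, the adjacency matrix of the Cayley graph $\Gamma(G,S)$ is $G$-circulant, and its spectrum is given by
\[ \left\{ \sum_{s \in S} \chi(s) \; : \; \chi \in \Hom(G, \C^{\times}) \right\},\]
where each character contributes one eigenvalue. Applying this to $G = (\F_q[x]/f, +)$ yields that the spectrum of $\Gamma(\F_q[x]/f, S)$ is indexed by $\Hom((\F_q[x]/f,+), \C^{\times})$.

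Next, I would appeal to \cref{prop:bijection_characters}, which establishes a bijection between $\F_q[x]/f$ and $\Hom((\F_q[x]/f,+), \C^{\times})$ given by $g \mapsto \psi_g$, where $\psi_g(b) = \zeta_p^{\Tr(\psi(gb))}$. Substituting this parameterization into the general formula above, the character $\psi_g$ contributes the eigenvalue
\[ \sum_{s \in S} \psi_g(s) = \sum_{s \in S} \zeta_p^{\Tr(\psi(gs))},\]
and letting $g$ run over $\F_q[x]/f$ recovers exactly the claimed set of eigenvalues.

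There is no real obstacle in this proof; the work was done in building up the symmetric $\F_p$-algebra structure on $\F_q[x]/f$ and proving \cref{prop:bijection_characters}. The final step is a one-line substitution, and the only thing worth being slightly careful about is that $S$ need not be symmetric or avoid $0$ in the statement, but the $G$-circulant diagonalization works verbatim for any subset $S$ (the resulting matrix may fail to be symmetric, but its eigenvalues are still given by the same character sums, as the simultaneous diagonalization is dictated by the abelian group structure rather than by symmetry of $S$).
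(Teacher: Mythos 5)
Your proof is correct and follows exactly the route the paper takes: cite the $G$-circulant diagonalization of Cayley graphs on finite abelian groups and then substitute the parameterization of characters from \cref{prop:bijection_characters}. Your closing remark that the character-sum formula persists even when $S$ is not symmetric or contains $0$ is a fair and slightly more careful observation than the paper makes, but it does not change the argument.
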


Let us now focus on the case of gcd-graphs; i.e., $S= S_D$. We have the following lemma. 

\begin{lem} \label{lem:spectrum_for_single_f_i}
Let $h \mid f$ be a monic divisor of $f.$ Then for each $g \in \F_q[x]$
\[ \sum_{a \in \F_q[x]/(f), \gcd(a,f)=h} \zeta_{p}^{\Tr(\psi(ag))} = c \left(g, \frac{f}{h} \right).\]
    
\end{lem}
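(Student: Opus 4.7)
The plan is to reduce the sum on the left to a Ramanujan sum over $\F_q[x]/(f/h)$ by a change of variables, and then invoke Proposition \ref{prop:induced_primitive} to identify the resulting linear functional as a non-degenerate one, so that the sum matches \cref{def:ramanujan_sum}.

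First, I would observe that the condition $\gcd(a,f) = h$ is equivalent to saying $a = hb$ for a unique $b \in \F_q[x]/(f/h)$ with $\gcd(b, f/h) = 1$. Indeed, if $h \mid a$ in $\F_q[x]/f$, then writing $a = hb$ determines $b$ modulo $f/h$, and the condition that the gcd is exactly $h$ (rather than a proper multiple of $h$ dividing $f$) translates to $\gcd(b, f/h) = 1$. Moreover, the quantity $\psi(hbg)$ only depends on $b \bmod (f/h)$, because replacing $b$ by $b + (f/h)c$ changes $hb$ by $fc \equiv 0 \pmod f$. Thus the sum can be rewritten as
\[ \sum_{a \in \F_q[x]/f,\, \gcd(a,f)=h} \zeta_p^{\Tr(\psi(ag))} \;=\; \sum_{b \in (\F_q[x]/(f/h))^\times} \zeta_p^{\Tr(\psi(h b g))}. \]

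Next, I would appeal to \cref{prop:induced_primitive} applied to the divisor $f/h$ of $f$: the map $\psi_{f/h}\colon \F_q[x]/(f/h) \to \F_q$ defined by $\psi_{f/h}(b) = \psi(h b)$ is a non-degenerate $\F_q$-linear functional on $\F_q[x]/(f/h)$. Substituting $\psi(hbg) = \psi_{f/h}(bg)$ transforms the right-hand side of the previous display into
\[ \sum_{b \in (\F_q[x]/(f/h))^\times} \zeta_p^{\Tr(\psi_{f/h}(bg))} \;=\; c_{\psi_{f/h}}(g, f/h). \]
By the preceding discussion (and the fact, established just before \cref{prop:ramanujan_sum_1} and its proof, that the Ramanujan sum does not depend on the choice of non-degenerate functional), this equals $c(g, f/h)$, which is exactly what is to be proved.

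I do not expect any serious obstacle here: the main point is the bijection $a \leftrightarrow b = a/h$ between the indexing set and $(\F_q[x]/(f/h))^\times$, and the identification of $b \mapsto \psi(hb)$ as the induced non-degenerate functional from \cref{prop:induced_primitive}. The only mild subtlety is verifying the independence of $\psi(hbg)$ on the choice of lift of $b$, which is immediate from $h \cdot (f/h) = f \equiv 0$.
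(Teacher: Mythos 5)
Your proposal is correct and follows essentially the same route as the paper's own proof: rewrite the indexing set via $a = hb$ with $b \in (\F_q[x]/(f/h))^{\times}$, recognize $b \mapsto \psi(hb)$ as the non-degenerate functional furnished by \cref{prop:induced_primitive}, and identify the resulting sum as $c(g, f/h)$. Your added checks (well-definedness of $\psi(hbg)$ under change of lift, and independence of the Ramanujan sum from the choice of functional) are points the paper leaves implicit, but they do not change the argument.
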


\begin{proof}
    For $a \in \F_q[x]$, $\gcd(a,f)=h$ if and only if $a = hb$ where $\gcd(b, \frac{f}{h})=1.$ Therefore, the above sum can be rewritten as 
    \[ \sum_{\substack{b \in \F_q[x]/(f/h)\\ \gcd(a,f/h)=1}} \zeta_{p}^{\Tr(\psi(bgh))} = \sum_{\substack{b \in \F_q[x]/(f/h)\\ \gcd(b,f/h)=1}} \zeta_{p}^{\Tr(\psi_h(bg))}.\]
    Here $\psi_h\colon \F_q[x]/(f/h) \to \F_q$ is the functional given by $\psi_h(x) = \psi(hx).$ By \cref{prop:induced_primitive}, $\psi_h$ is a non-generate linear functional on $\F_q[x]/(f/g).$ Therefore, we conclude that 
    \[\sum_{\substack{a \in \F_q[x]/(f)\\ \gcd(a,f)=h}} \zeta_{p}^{\Tr(\psi(ag))}= \sum_{\substack{b \in \F_q[x]/(f/h)\\ \gcd(b,f/h)=1}} \zeta_{p}^{\Tr(\psi_h(bg))} = c \left(g, \frac{f}{h} \right). \] 
\end{proof}
By \cref{prop:spectrum_cayley} and \cref{lem:spectrum_for_single_f_i}, we have the following theorem. 
\begin{thm} \label{thm:integer_eigens}
Let $f$ be a monic polynomial and $D= \{f_1, f_2, \ldots, f_k \}$ where $f_i \mid f.$ Then, the {eigenvalues} of $G_{f}(D)$ are given by the set 
    \[ \left \{ \sum_{i=1}^k c \left(g, \frac{f}{f_i} \right)\right \}_{g \in \F_q[x]/(f)}.
 \] 
  {In particular, all eigenvalues of the gcd-graph $G_f(D)$ are integers. }  
    \end{thm}
\begin{expl}

    The characteristic polynomial of the graph described in \cref{fig:z_25} has the following factorization 
 \[ (x - 4) (x - 1)^4  (x + 2)^4. \] 
 Similarly,  the characteristic polynomial of the graph described in \cref{fig:z_26} has the following factorization 
\[ (x - 3)(x + 3) (x - 1)^3 (x + 1)^3 .\] 
In particular, the eigenvalues of these graphs are integers as shown in \cref{thm:integer_eigens}. {We remark that these spectral calculations have also been verified using Ramanujan sums, as explained in \cref{thm:integer_eigens}. These verifications are described in \cite{Nguyen_gcd_graph}, which contains the code to calculate various arithmetic functions over $\F_q[x]$. In particular, it has a function to calculate the general Ramanujan sum $c(g,f)$ for all $g, f \in \F_q[x].$  }

{
For example, let us consider the graph $G_f(D)$ described in \cref{fig:z_25}, where ${f=x(x+1)\in \F_3[x]}$ and $D=\{x,x+1\}$. We shall identify $\F_3[x]/(f)$ with
\[\{0,1,2,x, x+1,x+2,2x,2x+1,2x+2\}.\] 
For $g=0$, we have $c(0,x)=\mu(1)\dfrac{\varphi(x)}{\varphi(1)}=2$, and similarly $c(0,x+1)=2$. Hence the eigenvalue of $G_{f}(D)$ associated with $g=0$ is $c(0,x)+c(0,x+1)=4$. (The eigenvalue $4$ is the degree of $G_{f}(D)$, and it appears with multiplicity $1$.)}

{
For $g$ coprime to $x$, we have $c(g,x)=\mu(x)\dfrac{\varphi(x)}{\varphi(x)}=-1$. Similarly, for $g$ coprime to $x+1$, we have $c(g,x+1)=-1$. 
We also have  
\[ c(x,x)=c(2x,x)=\mu(1)\dfrac{\varphi(x)}{\varphi(1)}=2, \]
and  
\[c(x+1,x+1)=c(2x+2,x+1)=2. \]}

{From the above discussion, we see that if $g\in \{1,2,x+2,2x+1\}$, then the eigenvalue of $G_{f}(D)$ associated with $g$ is 
\[c(g,x)+c(g,x+1)=(-1)+(-1)=-2. \]
(The eigenvalue $-2$ appears with multiplicity 4.) Finally, if $g\in \{x,x+1,2x,2x+2\}$, then the eigenvalue of $G_{f}(D)$ associated with $g$ is 
\[c(g,x)+c(g,x+1)=2+(-1)=1. \]
(The eigenvalue $1$ appears with multiplicity 4.) Our calculations show that the spectrum of $G_{f}(D)$, as described in \cref{thm:integer_eigens}, coincides with the spectrum obtained from the adjacency matrix.}
\end{expl}

As we discussed in the introduction, it is known that a $\Z/n\Z$-circulant graph has an integral spectrum if and only if it is a gcd-graph (see \cite[Theorem 7.1]{so2006integral} ). One may ask whether the same statement is true for graphs associated with $\F_q[x]/(f).$ The answer is no in general. In fact, we have the following general observation. 

\begin{prop} \label{prop:integral}
    Let $S$ be a symmetric subset of $\F_q[x]/(f)$ such that $0 \not \in S.$ Suppose further that $\F_{p}^{\times}S = S.$ Then, the Cayley graph $\Gamma(\F_q[x]/(f), S)$ has an integral spectrum. 
\end{prop}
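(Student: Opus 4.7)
The plan is to use the explicit eigenvalue formula from \cref{prop:spectrum_cayley} and show that each eigenvalue, being simultaneously Galois-invariant over $\Q$ and an algebraic integer, must lie in $\Z$. Concretely, by \cref{prop:spectrum_cayley} each eigenvalue has the form
\[ \lambda_g \;=\; \sum_{s \in S} \zeta_p^{\Tr(\psi(gs))}, \qquad g \in \F_q[x]/f,\]
so it suffices to prove $\lambda_g \in \Z$ for every $g$.

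The first step is to exploit the hypothesis $\F_p^\times S = S$. For any fixed $\alpha \in \F_p^\times$, the substitution $s \mapsto \alpha s$ permutes $S$, so reindexing gives
\[ \lambda_g \;=\; \sum_{s \in S} \zeta_p^{\Tr(\psi(g \alpha s))}. \]
Since $\alpha \in \F_p$ and the composition $\Tr \circ \psi \colon \F_q[x]/f \to \F_p$ is $\F_p$-linear, we have $\Tr(\psi(\alpha g s)) = \alpha \, \Tr(\psi(g s))$, and hence
\[ \lambda_g \;=\; \sum_{s \in S} \bigl(\zeta_p^{\alpha}\bigr)^{\Tr(\psi(g s))} \;=\; \sigma_\alpha(\lambda_g),\]
where $\sigma_\alpha \in \Gal(\Q(\zeta_p)/\Q)$ denotes the automorphism sending $\zeta_p \mapsto \zeta_p^{\alpha}$. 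Since $\alpha \mapsto \sigma_\alpha$ realizes the standard isomorphism $\F_p^\times \cong \Gal(\Q(\zeta_p)/\Q)$, the eigenvalue $\lambda_g$ is fixed by the full Galois group, so $\lambda_g \in \Q$.

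Finally, $\lambda_g$ is a $\Z$-linear combination of $p$-th roots of unity, hence an algebraic integer. A rational algebraic integer is an ordinary integer, so $\lambda_g \in \Z$, completing the argument.

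The only real subtlety is the compatibility between the combinatorial action $s \mapsto \alpha s$ on $S$ and the Galois action $\zeta_p \mapsto \zeta_p^{\alpha}$; this compatibility is essentially forced by the $\F_p$-linearity of $\Tr \circ \psi$, which is why the hypothesis is stated for $\F_p^\times$ rather than, say, $\F_q^\times$ (where the trace would no longer factor out as a scalar on the exponent). I do not anticipate any significant obstacle beyond making this linearity step precise.
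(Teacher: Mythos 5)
Your proof is correct, and it takes a genuinely different route from the paper's. The paper partitions $S$ into orbits under the (free, since $0 \notin S$ and every element of $\F_p^{\times}$ is a unit) action of $\F_p^{\times}$, and then evaluates each inner orbit sum $\sum_{k \in \F_p^{\times}} \bigl(\zeta_p^{\Tr(\psi(gs))}\bigr)^k$ explicitly as $-1$ or $p-1$ according to whether $\Tr(\psi(gs))$ is nonzero or zero; integrality is then immediate because each eigenvalue is exhibited as a sum of integers. You instead use the same reindexing $s \mapsto \alpha s$ to show that each eigenvalue is fixed by every $\sigma_\alpha \in \Gal(\Q(\zeta_p)/\Q)$, hence lies in $\Q \cap \overline{\Z} = \Z$. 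Both arguments pivot on exactly the same point — the $\F_p$-linearity of $\Tr \circ \psi$, which converts the combinatorial symmetry $\F_p^{\times}S = S$ into a symmetry of the exponents — so the underlying mechanism is shared. What the paper's version buys is an explicit closed form for the eigenvalue, namely $(p-1)\cdot\#\{[s] : \Tr(\psi(gs)) = 0\} - \#\{[s] : \Tr(\psi(gs)) \neq 0\}$, which is occasionally useful downstream; what yours buys is brevity and a cleaner conceptual explanation of \emph{why} the hypothesis is stated for $\F_p^{\times}$ rather than $\F_q^{\times}$ (the Galois group of $\Q(\zeta_p)/\Q$ is $\F_p^{\times}$, full stop). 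Your closing remark on that last point is exactly right.
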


\begin{proof}
    We define the following equivalence relation on $\F_q[x]/(f)$. For $u, v \in \F_q[x]/(f)$, we say that $u \sim v$ if $u = kv$ where $k \in \F_p^{\times}.$ By \cref{prop:spectrum_cayley}, it is enough to show that for each $g \in \F_q[x]/(f)$,  $\sum_{s \in S} \zeta_p^{\Tr(\psi(gs))} \in \Z$.  Because $\F_p^{\times} S = S$, this sum can be written as 
\[ \sum_{[s] \in S/\sim } \left( \sum_{k \in \F_{p}^{\times}} \zeta_p^{\Tr(\psi(gks))} \right) = \sum_{[s] \in S/\sim } \left( \sum_{k \in \F_{p}^{\times}} (\zeta_p^{\Tr(\psi(gs))})^k \right) . \]
    We know that 
\[
\sum_{k \in \F_{p}^{\times}} (\zeta_p^{\Tr(\psi(gs))})^k = 
\begin{cases} 
-1 & \text{if } \Tr(\psi(gs)) \neq 0 \\
p-1 & \text{if } \Tr(\psi(gs))=0. 
\end{cases}
\]
This shows that $\sum_{s \in S} \zeta_p^{\Tr(\psi(gs))} \in \Z$ for each $g \in \F_q[x]/(f).$ 
\end{proof}

\begin{rem}
    Let $\F_q$ be a finite field such that $\F_q \neq \F_p.$ Let $f=x$. In this case $\F_q[x]/(f) \cong \F_q.$ Let $V$ be proper $\F_p$-subspace of $\F_q$ and $S = V \setminus \{0 \}.$ Then by \cref{prop:integral}, $\Gamma(\F_q[x]/(f), S)$ has an integral spectrum even though it is not a gcd-graph. {In a recent work, we show that the converse of this statement is also true; namely, $\Gamma(\F_q[x]/(f), S)$ is integral if and only if $S \cup \{0\}$ is a vector space over $\F_p$ (see \cite[Theorem 1.1]{nguyen2024integral}.)}

\end{rem}


\section{Perfect gcd-graphs} \label{sec:perfect}
A graph $G$ is said to be {\emph{perfect}} if, for every induced subgraph $H$ of $G$, the chromatic number of $H$ equals the size of its maximum clique. Perfect graphs play a fundamental role in the study of graph coloring and cliques. They encompass several important families of graphs and provide a unified framework for results relating to colorings and cliques within these families. Moreover, many central problems in combinatorics can be rephrased as questions about whether certain associated graphs are perfect (see \cite{chvatal1984topics, dilworth1987decomposition, mirsky1971dual}). For these reasons, it seems interesting to study whether $G_{f}(D)$ is perfect.

In \cite[Theorem 9.5]{unitary}, the authors classify all perfect unitary Cayley graphs associated with finite commutative rings. Specifically, for a ring $R=R_1 \times R_2 \times \cdots \times R_t$ where $R_i$ are finite local rings such that $|R_1| \leq |R_2| \leq \cdots \leq |R_t|$, the unitary Cayley graph on $R$ is perfect if and only if one of the following conditions hold: 

\begin{enumerate}
    \item The residue field of $R_1$ is $2$. In this case, $G_{R}$ is bipartite and hence perfect; 
    \item $R$ is either a local ring or a product of two local rings; i.e, $t \leq 2.$
\end{enumerate}

A direct consequence of \cite[Theorem 9.5]{unitary} for the unitary Cayley graph over $\Z$ is the following. 

\begin{cor}
    The unitary Cayley graph on $\Z/n\Z$ is perfect if and only if one of the following conditions holds. 

    \begin{enumerate}
        \item $2 \mid N$.
        \item $\omega(N) \leq 2$ where $\omega(N)$ is the number of distinct irreducible factors of $N.$
    \end{enumerate}
\end{cor}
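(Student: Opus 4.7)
The plan is to invoke the classification theorem \cite[Theorem 9.5]{unitary} via the Chinese Remainder Theorem. Writing $N=p_1^{a_1}\cdots p_t^{a_t}$ with the $p_i$ distinct primes, we have
\[
\Z/N \;\cong\; \Z/p_1^{a_1}\times\cdots\times\Z/p_t^{a_t},
\]
a product of $t=\omega(N)$ finite local rings, the $i$-th factor having residue field $\F_{p_i}$. So the two sufficient conditions in the cited theorem---``some local factor has residue field $\F_2$'' and ``$t\le 2$''---translate directly into the two conditions of the corollary.

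For the ``if'' direction, suppose first that $2\mid N$. Rather than fussing with the size-ordering convention in condition (1), I would argue bipartiteness directly: the partition of $\Z/N$ by parity is a bipartition of the unitary Cayley graph, since any edge $a\sim b$ requires $\gcd(a-b,N)=1$ and hence $a-b$ odd; bipartite graphs are perfect. Suppose instead that $\omega(N)\le 2$; then $\Z/N$ is a product of at most two local rings, so case (2) of \cite[Theorem 9.5]{unitary} applies verbatim.

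For the converse, assume $N$ is odd and $\omega(N)\ge 3$. Then every residue field $\F_{p_i}$ has order at least $3$, so no local factor has residue field $\F_2$, and the number of factors is $t\ge 3$. Neither hypothesis of \cite[Theorem 9.5]{unitary} is met, so its ``only if'' direction forces $G_{\Z/N}$ to be non-perfect.

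There is no substantive obstacle here; the entire proof is essentially a translation of a known classification across the CRT decomposition. The one mild subtlety is that condition (1) in the cited theorem is phrased using the \emph{smallest} local factor in the ordering by size, while for $\Z/N$ the factor with residue field $\F_2$ need not be smallest (e.g.\ in $\Z/8\times\Z/3\times\Z/5$). This is harmless: bipartiteness is preserved by the tensor-product structure of the unitary Cayley graph on a product ring, so the existence of any factor with residue field $\F_2$---which is exactly the condition $2\mid N$---is enough.
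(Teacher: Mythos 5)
Your proposal is correct and matches the paper's intent: the paper states this corollary as a direct consequence of \cite[Theorem 9.5]{unitary} via the CRT decomposition $\Z/N\cong\prod_i\Z/p_i^{a_i}$, with no further argument given, and your write-up is exactly that translation spelled out. Your side remark about the size-ordering convention (and the direct parity bipartition when $2\mid N$) is a harmless clarification rather than a different route, since $\F_2$ is the smallest possible residue field in any case.
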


Over the polynomial ring $\F_q[x]$, we have an analogous statement. 

\begin{cor} \label{cor:unitary_perfect}
    The unitary Cayley graph $G_{f}(\{1 \})$ on $\F_q[x]/(f)$ is perfect if and only if one of the following conditions holds. 

    \begin{enumerate}
        \item $\F_q= \F_2$ and $gcd(f, x(x+1)) \neq 1.$ 
        \item $\omega(f) \leq 2$ where $\omega(f)$ is the number of distinct irreducible factors of $f.$
    \end{enumerate}
\end{cor}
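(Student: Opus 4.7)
The plan is to derive this corollary directly from the stated classification of perfect unitary Cayley graphs on finite commutative rings (\cite[Theorem 9.5]{unitary}) by translating the ring-theoretic hypotheses into statements about the polynomial $f$. The key structural input is the Chinese Remainder Theorem: if the monic factorization of $f$ into distinct irreducibles is $f = \prod_{i=1}^{t} p_i^{e_i}$, then
\[
\F_q[x]/f \cong \prod_{i=1}^{t} \F_q[x]/p_i^{e_i},
\]
and each factor $R_i := \F_q[x]/p_i^{e_i}$ is a finite local ring with residue field $\F_q[x]/p_i \cong \F_{q^{\deg p_i}}$. In particular, $t = \omega(f)$, so the condition $t \leq 2$ in the cited theorem corresponds exactly to $\omega(f) \leq 2$.

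Next I would examine when the smallest residue field among the $R_i$ has order $2$. A residue field $\F_{q^{\deg p_i}}$ has order $2$ if and only if $q = 2$ and $\deg p_i = 1$; and the only monic irreducibles of degree $1$ in $\F_2[x]$ are $x$ and $x+1$. So the smallest residue field has order $2$ precisely when $\F_q = \F_2$ and at least one of $x, x+1$ divides $f$, which is the compact condition $\gcd(f, x(x+1)) \neq 1$. This matches condition (1) of the corollary.

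Finally, I would invoke \cite[Theorem 9.5]{unitary} in both directions: the unitary Cayley graph $G_{f}(\{1\})$ is perfect if and only if either the smallest residue field among the local factors has order $2$ (the bipartite case, which is automatically perfect by König's theorem) or the number of local factors is at most two. Combining the translations above shows these correspond exactly to conditions (1) and (2) of the corollary. The only mild subtlety to watch for is the degenerate case where $f$ is constant or where some local factor is a field itself (i.e., $e_i = 1$); in both cases the identification of residue fields and the count $t = \omega(f)$ still go through unchanged, so no obstacle arises. I expect no real difficulty beyond bookkeeping, since the heavy lifting is done by \cite[Theorem 9.5]{unitary}; the proof is essentially a dictionary between the language of local rings and the language of irreducible factorization in $\F_q[x]$.
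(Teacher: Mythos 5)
Your proposal is correct and matches the paper's (implicit) argument: the paper states this corollary as a direct consequence of \cite[Theorem 9.5]{unitary}, exactly as you do, using the Chinese Remainder Theorem to identify the local factors of $\F_q[x]/f$ with the $\F_q[x]/p_i^{e_i}$ and observing that a residue field of order $2$ occurs precisely when $\F_q=\F_2$ and $x$ or $x+1$ divides $f$. Your bookkeeping of the dictionary between local factors and irreducible divisors, including the degenerate cases, is exactly the intended reading.
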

We will use \cref{cor:unitary_perfect} together with patterns of $G_{f}(D)$ discovered through our experiments in Sagemath and the Python package Networkx in order to find some sufficient conditions for $G_{f}(D)$ to be \textit{non-perfect}. In particular, we will exploit the fact that certain induced subgraphs of $G_{f}(D)$ are naturally isomorphic to the unitary Cayley graphs on some quotient rings of $R = \F_q[x]/(f).$  More precisely, by \cref{lem:induced_graph_ideal}, we have the following observation.


\begin{prop} \label{prop:induced_perfect_p_count}
    Suppose $g \in D$ such that $g \nmid f_i$ for all $i$ such that $f_i \neq g.$ Then the induced graph on $I_g$ is naturally isomorphic to the unitary Cayley graph $G_{f/g}(\{1 \}).$ Furthermore, if $\omega(f/g) \geq 3$ and $\F_q \neq \F_2$, then $G_{f/g}(D)$ is not perfect and hence $G_{f}(D)$ is not perfect. 
\end{prop}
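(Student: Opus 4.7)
The plan is to reduce this to a direct application of \cref{lem:induced_graph_ideal} together with \cref{cor:unitary_perfect}. First I would invoke \cref{lem:induced_graph_ideal}, which tells us that the induced subgraph on $I_g$ is isomorphic to $G_{f/g}(D_g)$ where $D_g = \{f_i/g \mid f_i \in D,\ g \mid f_i\}$. The hypothesis ``$g \nmid f_i$ for every $f_i \neq g$'' is precisely what forces the set of indices $\{i : g \mid f_i\}$ to collapse to the single index corresponding to $g$ itself. Consequently $D_g = \{g/g\} = \{1\}$, and therefore the induced subgraph on $I_g$ is literally the unitary Cayley graph $G_{f/g}(\{1\})$. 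This establishes the first assertion.

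For the second assertion, I would apply \cref{cor:unitary_perfect} to the polynomial $f/g$. The two sufficient-and-necessary conditions for $G_{f/g}(\{1\})$ to be perfect are (i) $\F_q = \F_2$ together with $\gcd(f/g,\, x(x+1)) \neq 1$, and (ii) $\omega(f/g) \leq 2$. Under the standing hypotheses $\F_q \neq \F_2$ and $\omega(f/g) \geq 3$, both conditions fail. Hence $G_{f/g}(\{1\})$ is not perfect.

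Finally, I would invoke the standard fact that perfectness is closed under taking induced subgraphs (this is immediate from the definition, since every induced subgraph of an induced subgraph of $G_f(D)$ is itself an induced subgraph of $G_f(D)$). Since we have exhibited an induced subgraph of $G_f(D)$, namely the one supported on the ideal $I_g$, which is isomorphic to a non-perfect graph, the graph $G_f(D)$ itself cannot be perfect.

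There is essentially no obstacle here: the content has been entirely front-loaded into \cref{lem:induced_graph_ideal} and \cref{cor:unitary_perfect}, and the hypothesis on $g$ is tailored exactly so that $D_g$ reduces to $\{1\}$. The only point requiring mild care is verifying that the natural identification of $I_g$ with $\F_q[x]/(f/g)$ provided by \cref{lem:induced_graph_ideal} really does send the gcd-adjacency condition on $I_g \subseteq \F_q[x]/f$ to the unit-adjacency condition on $\F_q[x]/(f/g)$, but this is exactly the content of that lemma once one observes $D_g = \{1\}$.
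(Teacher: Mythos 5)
Your proposal is correct and matches the paper's own (essentially one-line) justification: the paper derives this proposition directly from \cref{lem:induced_graph_ideal} (with the hypothesis on $g$ forcing $D_g=\{1\}$) combined with \cref{cor:unitary_perfect} and the heredity of perfectness under induced subgraphs. No gaps; the only cosmetic point is that the statement's ``$G_{f/g}(D)$'' should read $G_{f/g}(\{1\})$, which you implicitly and correctly interpreted.
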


We discuss a case where we can apply \cref{prop:induced_perfect_p_count} rather directly. 

\begin{prop} \label{prop:perfect_k_3}
    Let $f \in \F_q[x]$ and ${D} = \{f_1, f_2, \ldots, f_k\}$ be a subset of divisors of $f$. Suppose that the following conditions hold. 

    \begin{enumerate}
    \item $\deg(f_i) \geq 1.$
        \item $f_i$'s are pairwisely relatively prime; i.e, $\gcd(f_i, f_j) =1$ for all $i \neq j.$
        \item $k \geq 3.$
        \item $\F_q \neq \F_2.$
    \end{enumerate}
    Then $G_{f}(D)$ is not a perfect graph. 
\end{prop}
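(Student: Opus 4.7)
The plan is to apply \cref{prop:induced_perfect_p_count} wherever possible and to handle a single residual configuration by exhibiting an induced $5$-cycle directly. Choosing $g = f_1$, the pairwise coprimality of the $f_i$'s combined with $\deg(f_i) \geq 1$ forces $f_1 \nmid f_j$ for $j \neq 1$, so the first hypothesis of \cref{prop:induced_perfect_p_count} is automatic. Since $f_2, \ldots, f_k$ are pairwise coprime and each divides $f/f_1$, one has $\omega(f/f_1) \geq k - 1$. Thus for $k \geq 4$ we get $\omega(f/f_1) \geq 3$ immediately and \cref{prop:induced_perfect_p_count} concludes.

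For $k = 3$ a short bookkeeping step is needed. Using the identity $\omega(f/f_i) = \omega(f) - \#\{p \mid f_i : v_p(f) = v_p(f_i)\}$, I would argue that $\omega(f/f_i) \geq 3$ for some $i$ unless every $f_j$ is irreducible, $f$ has no prime factors outside $\{f_1, f_2, f_3\}$, and $v_{p_j}(f) = 1$ for each $j$ — equivalently, unless $f = f_1 f_2 f_3$ with $f_1, f_2, f_3$ distinct irreducibles. In every other $k=3$ configuration \cref{prop:induced_perfect_p_count} again finishes the proof with an appropriate choice of $g = f_i$.

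In the residual case $f = f_1 f_2 f_3$ with $f_i$ irreducible, the Chinese Remainder Theorem identifies $\F_q[x]/f$ with $K_1 \times K_2 \times K_3$, where $K_i = \F_q[x]/f_i$ is a field of order at least $q \geq 3$. Under this decomposition, the adjacency rule becomes: two vertices are adjacent in $G_f(D)$ precisely when they agree in exactly one coordinate. Choosing pairwise distinct elements $\alpha_i, \beta_i, \gamma_i \in K_i$ (possible since $|K_i| \geq 3$), I would propose the five vertices
\[ v_1 = (\alpha_1, \alpha_2, \alpha_3),\ v_2 = (\alpha_1, \beta_2, \gamma_3),\ v_3 = (\alpha_1, \gamma_2, \alpha_3),\ v_4 = (\beta_1, \gamma_2, \beta_3),\ v_5 = (\gamma_1, \alpha_2, \beta_3), \]
and verify that each consecutive pair (in cyclic order) agrees in exactly one coordinate, while each non-consecutive pair agrees in either zero or two coordinates. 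This produces an induced $C_5$, which is a classical non-perfect graph, and hence $G_f(D)$ is not perfect.

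The main obstacle is precisely the residual configuration $f = f_1 f_2 f_3$ with all factors irreducible: here \cref{prop:induced_perfect_p_count} cannot deliver a non-perfect unitary Cayley subgraph on any proper ideal, so a forbidden induced subgraph of $G_f(D)$ must be located by hand. The five-vertex construction is essentially forced by the three-fold symmetry of the product decomposition, and its verification reduces to ten routine coordinate-matching checks.
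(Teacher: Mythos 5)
Your overall strategy is the same as the paper's: dispose of $k\geq 4$ and most $k=3$ configurations with \cref{prop:induced_perfect_p_count}, then exhibit an explicit induced $C_5$ in the triple product; your five-cycle in the field case is correct (all five consecutive pairs agree in exactly one coordinate, all five diagonals agree in zero or two), and it is a clean symmetric alternative to the computer-found cycle in the paper. However, your bookkeeping step for $k=3$ contains a genuine error. You claim that $\omega(f/f_i)\geq 3$ for some $i$ unless $f=f_1f_2f_3$ with all $f_i$ \emph{irreducible}. This is false: take $f=p^2qr$ with $p,q,r$ distinct irreducibles and $D=\{p^2,q,r\}$. Here $f_1=p^2$ is not irreducible, yet $\omega(f/f_1)=\omega(qr)=2$, $\omega(f/f_2)=\omega(p^2r)=2$, $\omega(f/f_3)=\omega(p^2q)=2$, so \cref{prop:induced_perfect_p_count} never applies. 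Your own identity $\omega(f/f_i)=\omega(f)-\#\{p\mid f_i: v_p(f)=v_p(f_i)\}$ already shows the correct residual class is $f=f_1f_2f_3$ with each $f_i$ a \emph{prime power} (this is what the paper reduces to, via $\omega(f_i)=1$), not each $f_i$ irreducible.

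This matters for the second half of your argument, because in the prime-power case $K_i=\F_q[x]/f_i$ is a local ring, not a field, and the adjacency rule is not ``agree in exactly one coordinate'': one needs $a_i=b_i$ in one coordinate \emph{and} $a_j-b_j$ a unit in the other two. Merely choosing ``pairwise distinct'' elements $\alpha_i,\beta_i,\gamma_i\in K_i$ is then insufficient — two distinct elements of $\F_q[x]/p^2$ can differ by a non-unit, which would destroy some of the five edges of your cycle. The fix is minor and is implicitly what the paper does: take $\alpha_i,\beta_i,\gamma_i$ to be three distinct elements of $\F_q\subseteq K_i$ (possible since $q\geq 3$), so that all pairwise differences are nonzero scalars and hence units in the local ring; with that choice your ten coordinate-matching checks go through verbatim in the general residual case. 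With this correction your proof is complete and essentially coincides with the paper's.
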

\begin{proof}
    If $k \geq 4$ then 
    \[\omega(f/f_1)  \geq \omega(f_2)+\omega(f_3)+\omega(f_4) \geq 3 .\] 
    By applying \cref{prop:induced_perfect_p_count} for $g=f_1$, we conclude that $G_{f}(D)$ is not perfect. Let us assume now that $k=3$. By the same argument, it is enough to consider the case $\omega(f_1)=\omega(f_2) =\omega(f_3)=1.$ Furthermore, if $ f \neq f_1 f_2 f_3$, then there exists an index $i \in \{1, 2, 3\}$ such that $\omega(f/f_i) \geq 3.$ As a result, $G_{f}(D)$ is not perfect by \cref{prop:induced_perfect_p_count}.
    Let us now consider the case $f=f_1f_2f_3$. In this case 
    \[ \F_q[x]/(f) \cong \F_q[x]/(f_1 )\times \F_q[x]/(f_2) \times \F_q[x]/(f_3). \]
    Under this isomorphism, we can identify $V(G_{f}(D))$ as the set of all triples $(a_1, a_2, a_3)$ where $a_i \in \F_q[x]/(f_i)$ for $1 \leq i \leq 3.$ 
    Furthermore, two vertices $(a_1,a_2,a_3)$ and $(b_1, b_2, b_3)$ are adjacent if and only if there exists an index $i \in \{1, 2, 3\}$ such that $a_i=b_i$ and ${(a_j-b_i) \in (\F_q[x]/(f_j))^{\times}}$ if $j \neq i$. Using Sagemath, we can find the following induced $5$-cycle in $G_{f}(D)$ 
    \[ (0, 0, 0) \to (\alpha, \alpha, 0) \to (1, \alpha, 1) \to (1, 1, \alpha) \to (\alpha, 1, 0),\]
    where $\alpha \in \F_q \setminus \{0, 1 \}.$ This show that $G_{f}(D)$ is not perfect. 
\end{proof}
The case $k=2$ is a bit more challenging. After some experiments with Sagemath, we found the following statement. 

\begin{prop} \label{prop:perfect_k_2}
        Let $f \in \F_q[x]$ and ${D} = \{f_1, f_2 \}$ be a subset of divisors of $f$ such that $\gcd(f_1,f_2)=1$. Suppose that the following conditions hold. 
        \begin{enumerate}
        \item $\F_q \neq \F_2$.
        \item $\omega(f_1) \geq 2$, $\omega(f_2) \geq 1$. 
        \end{enumerate}
        Then, $G_{f}(D)$ is not perfect. 
\end{prop}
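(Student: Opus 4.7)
The plan is to exhibit an induced $7$-cycle in $G_f(D)$; by the Strong Perfect Graph Theorem this will prevent $G_f(D)$ from being perfect. The construction will use two ingredients: an induced $C_4$ in the unitary Cayley graph $G_{f_1}(\{1\})$ built from the reducibility of $f_1$, and three pairwise unit-separated ``colors'' in $R_2 := \F_q[x]/f_2$.

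Since $\omega(f_1) \geq 2$, I will factor $f_1 = g_1 g_2$ with $g_1, g_2$ coprime and non-constant, so that by CRT $R_1 := \F_q[x]/f_1 \cong R' \times R''$ with $R' = \F_q[x]/g_1$ and $R'' = \F_q[x]/g_2$. Using $|\F_q| \geq 3$, I will pick distinct scalars $u_1, u_2 \in \F_q^\times$ and a scalar $v \in \F_q \setminus \{0, 1\}$; then $u_1, u_2$ are units of $R'$ (as nonzero scalars) and $v, v - 1$ are units of $R''$. The four points
\[
P_0 = (0, 0),\quad P_1 = (u_1, 1),\quad P_2 = (0, v),\quad P_3 = (u_2, 1)
\]
form an induced $C_4$ in $G_{f_1}(\{1\})$: the four consecutive differences have both coordinates units of $R'$ and $R''$ respectively, whereas the two diagonals $P_2 - P_0 = (0, v)$ and $P_3 - P_1 = (u_2 - u_1, 0)$ each contain a zero coordinate and hence are not units of $R' \times R''$. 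Similarly, any three distinct scalars $c_0, c_1, c_2 \in \F_q \subset R_2$ have pairwise unit differences.

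I will first treat the case $f = f_1 f_2$. Coprimality and CRT give $\F_q[x]/f \cong R_1 \times R_2$, and a direct inspection of adjacency shows that $G_f(D)$ is the Cartesian product of $G_{f_1}(\{1\})$ and $G_{f_2}(\{1\})$. I then claim that the seven vertices
\[
v_1 = (P_0, c_0),\ v_2 = (P_1, c_0),\ v_3 = (P_1, c_1),\ v_4 = (P_2, c_1),\ v_5 = (P_2, c_2),\ v_6 = (P_3, c_2),\ v_7 = (P_0, c_2)
\]
form an induced $C_7$: each consecutive pair agrees in one coordinate and is unit-different in the other (so is an edge), while each of the $14$ non-consecutive pairs is a non-edge, either because both coordinates differ (preventing an edge in a Cartesian product) or, as happens only for $v_5 v_7$, because the difference realises one of the diagonal non-edges of the induced $C_4$ above.

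For general $f$ with $f_1 f_2 \mid f$, I will extend the construction by lifting. Writing $\F_q[x]/f \cong \tilde R_1 \times \tilde R_2 \times \tilde H$ via CRT, where $\tilde R_i$ absorbs the full power of primes dividing $f_i$ in $f$ and $\tilde H$ accounts for primes of $f$ coprime to $f_1 f_2$, I will lift each $v_j$ to an element of $\F_q[x]/f$ whose $\tilde R_i$-components satisfy the right divisibility (so the $\gcd$ with $\tilde f_i$ equals $f_i$ on each cycle edge, not a larger divisor) and whose $\tilde H$-components trace out a length-$7$ walk with consecutive unit differences (possible because every residue field of $\tilde H$ has size $\geq |\F_q| \geq 3$). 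The main obstacle will be confirming the non-edges survive the lift; this will follow from the observation that any non-unit difference in one of the CRT factors injects an extra prime-power into the gcd with $f$, automatically pushing it outside $\{f_1, f_2\}$.
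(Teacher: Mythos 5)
Your strategy---exhibiting an explicit induced $C_7$ and hence an odd hole---is the same basic mechanism as the paper's, but you reach it by a genuinely different route. The paper first reduces to the ring $\F_q[x]/(f_1f_2)$: it disposes of $\omega(f)>\omega(f_1)+\omega(f_2)$ via \cref{prop:induced_perfect_p_count}, then invokes \cref{prop:sufficient_condition_prime} to write $G_f(D)$ as a wreath product $G_{f_1f_2}(\{f_1,f_2\})*G_{f/(f_1f_2)}(\emptyset)$, and finally displays a computer-found $7$-cycle in $\F_q[x]/h_1\times\F_q[x]/h_2\times\F_q[x]/f_2$. You work directly in $\F_q[x]/f$ and assemble the $C_7$ conceptually from an induced $C_4$ in $G_{f_1}(\{1\})$ together with three scalar colours in $\F_q[x]/f_2$. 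For $f=f_1f_2$ your verification is complete and correct: the identification of $G_f(D)$ with the Cartesian product of $G_{f_1}(\{1\})$ and $G_{f_2}(\{1\})$ is right, $P_0,\dots,P_3$ do form an induced $C_4$, all seven cyclic pairs are edges, and $v_5v_7$ is indeed the only non-consecutive pair sharing a coordinate, which your diagonal $P_2-P_0=(0,v)$ handles. Two remarks: you only need the trivial direction ``an odd hole is an imperfect induced subgraph,'' not the Strong Perfect Graph Theorem; and your direct route, once the lifting step is written out, covers configurations such as $f=p_1^2p_2p_3$, $f_1=p_1p_2$, $f_2=p_3$, where the hypothesis $\rad(g/f_i)=\rad(f/f_i)$ of \cref{prop:sufficient_condition_prime} fails for $g=f_1f_2$ and the paper's reduction does not directly apply. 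That robustness is what your approach buys.

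The gap is the general case $f\neq f_1f_2$, which you only sketch, and the one justification you do give is wrong as stated: ``any non-unit difference in one of the CRT factors \dots pushes the gcd outside $\{f_1,f_2\}$'' cannot be right, since every type-$f_1$ edge itself has a non-unit difference in the $\tilde{R}_1$ factor. What actually saves the non-edges is the correct but weaker observation that $\gcd(u-v,f)=f_i$ forces $f_i\mid u-v$ and $\gcd(u-v,f_{3-i})=1$; hence, as long as your lifts reduce to the original pattern modulo $f_1$ and modulo $f_2$, all fourteen non-edges survive exactly as before. The unaddressed work is on the edges: if a type-$f_1$ edge is lifted so that the two $\tilde{R}_1$-components coincide, then $\gcd(u-v,\tilde{f}_1)=\tilde{f}_1\neq f_1$ whenever $f$ carries higher powers of the primes of $f_1$, and the edge is destroyed. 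You must offset the lifts by $f_1$ (resp.\ $f_2$) along the three type-$f_1$ (resp.\ four type-$f_2$) edges---e.g.\ $a_3=a_2+f_1$, $a_5=a_4+f_1$, $a_1=a_7+f_1$---and then check that these offsets, being divisible by $\rad(f_1)$, do not disturb the unit conditions required on the remaining edges; the $\tilde{H}$-components can be taken to be the scalars $0,1,0,1,0,1,\alpha$. All of this does work, but as written the lifting step is a placeholder rather than a proof, so the argument is incomplete until it is carried out.
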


\begin{proof}
    Let us assume to the contrary that $G_f(D)$ is perfect. Since $\gcd(f_1,f_2)=1$, we have $f_1 f_2 \mid f$ and hence
    \[ \omega(f) \geq \omega(f_1)+\omega(f_2).\]
    If $\omega(f)> \omega(f_1)+\omega(f_2)$ then $\omega(f/f_2) > \omega(f_1) \geq 2.$ By \cref{prop:induced_perfect_p_count}, we know $G_f(D)$ is not perfect. Therefore, we must have $\omega(f)= \omega(f_1)+\omega(f_2).$

    Let $g= f_1 f_2.$  By \cref{prop:sufficient_condition_prime}, the ideal generated by $g$ is a homogeneous set in $G_f(D)$ and furthermore 
    $G_f(D)$ is isomorphic to the wreath product ${G_{g}(\{f_1, f_2\}) * G_{f/g}(\emptyset)}.$ Since $G_f(D)$ is perfect, $G_{g}(\{f_1, f_2\})$ is perfect as well. Because $\omega(f_1) \geq 2$, we can find $h_1, h_2 \in \F_q[x]$ such that $f_1 = h_1 h_2$ and $\gcd(h_1,h_2)=1.$ By the Chinese remainder theorem 
    \[ \F_q[x]/(g) \cong \F_q[x]/(h_1) \times \F_q[x]/(h_2) \times \F_q[x]/(f_2). \]

    Under this isomorphism, we can identify $\F_q[x]/(g)$ with the set of tuples $(a_1, a_2, a_3)$ such that $a_1 \in \F_q[x]/(h_1), a_2 \in \F_q[x]/(h_2), a_3 \in \F_q[x]/(f_2).$ Furthermore, two vertices $(a_1,a_2,a_3)$ and $(b_1, b_2, b_3)$ are adjacent if and only if one of the following conditions happens 
    \begin{enumerate}
        \item $a_1 = b_1, a_2 = b_2$ and $a_3 - b_3 \in (\F_q[x]/(f_2))^{\times}$,
        \item $a_3=b_3$ and $a_i-b_i \in (\F_q[x]/(h_i))^{\times}$ for $i \in \{1,2\}.$
    \end{enumerate}
    Using Sagemath, we can find the following $7$-cycle in $G_g(\{f_1, f_2 \})$
    \[ (0, 0, 0) \to (1,1,0) \to (\alpha, 0, 0) \to (\alpha, 0, 1) \to (0, 1,1) \to (\alpha, \alpha, 1) \to (0, 0, 1),\]
    where $\alpha \in \F_q \setminus \{0,1 \}.$ We conclude that $G_g(\{f_1, f_2\})$ is not a perfect graph, which is a contradiction. 
\end{proof}

\begin{rem}
    Curious readers might wonder why we choose a $7$-cycle in the proof of \cref{prop:perfect_k_2} instead of choosing a $5$-cycle as in the proof of \cref{prop:perfect_k_3}. The reason is that when $\omega(f_1)=2, \omega(f_2)=1$ and $f=f_1f_2$, our code cannot find a $5$-cycle in $G_f(D).$ It seems interesting to investigate whether this is always the case. 
\end{rem}

\begin{rem}
    The only remaining case that we miss is when $\omega(f_1)=\omega(f_2)=1.$ We distinguish the following cases. 

    \smallskip
\noindent (1) If $f=f_1f_2$ where both $f_1,f_2$ are irreducible then $G_f(D)$ is isomorphic to the complement of $G_{f}(\{1 \})$ which is perfect by \cref{cor:unitary_perfect}. Therefore, $G_f(D)$ is perfect as well. 

\smallskip

\noindent (2) On the other hand, if either $f_1$ or $f_2$ is reducible, our code can always find a $7$-cycle in $G_f(D).$ Unfortunately, we cannot find a universal pattern in this case. It would be quite interesting to solve this puzzle completely.

\end{rem}

\section{Induced subgraphs of gcd-graphs} \label{sec:induced}

A theorem of Erdős and Evans (see \cite{erdos1989representations}) says that every graph $G$ is an induced subgraph of the unitary Cayley graph on $\Z/n\Z$ for some squarefree $n.$ Using this result, it is shown in \cite{kiani2015unitary} that for a given finite graph $G$ and a finite field $F$, $G$ is an induced subgraph of the unitary graph of a matrix algebra $M_d(F)$ for some value of $d$ (they also provide some precise upper-bound on $d$ when $G$ is the complete graph $K_m$). In light of these results, it seems interesting to ask whether a graph $G$ can be realized as an induced subgraph of $G_{f}(D)$ for some choice of $f, D$ and $\F_q.$ Under some rather mild conditions, the answer is yes as we will show below. 
{For the related definitions we refer the reader to \cref{sec:background}.}
First, we introduce the following observation.

\begin{lem} \label{lem:add_complete_graph}
    If $G$ is an induced subgraph of $H$, then $G$ is also an induced subgraph of the {tensor} product $H \times K_{|G|}$ where $K_{|G|}$ is the complete graph on $|G|$-nodes.
\end{lem}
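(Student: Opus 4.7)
The plan is to construct an explicit embedding $V(G) \to V(H) \times V(K_{|G|})$ and verify it is an induced-subgraph embedding. Write $n = |G|$, enumerate $V(G) = \{v_1, \ldots, v_n\}$ and $V(K_n) = \{k_1, \ldots, k_n\}$, and let $\iota \colon V(G) \to V(H)$ be the given induced-subgraph embedding. Define
\[
\phi \colon V(G) \longrightarrow V(H) \times V(K_n), \qquad \phi(v_i) = (\iota(v_i), k_i).
\]
I would then verify the three conditions from the definition of induced subgraph in order: injectivity, edge preservation, and edge reflection.

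For injectivity, the second coordinates $k_i$ are pairwise distinct, so $\phi$ is injective automatically (we do not even need injectivity of $\iota$, though it holds). For edge preservation, if $v_i v_j \in E(G)$ then $\iota(v_i)\iota(v_j) \in E(H)$ because $\iota$ is a graph morphism (here $i \neq j$, so $\iota(v_i) \neq \iota(v_j)$), and $k_i k_j \in E(K_n)$ since any two distinct vertices of $K_n$ are adjacent. By the definition of the tensor product (\cref{def:tensor_product}), this gives $\phi(v_i)\phi(v_j) \in E(H \times K_n)$.

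For edge reflection, suppose $\phi(v_i)\phi(v_j) \in E(H \times K_n)$. Then by \cref{def:tensor_product} we have both $\iota(v_i)\iota(v_j) \in E(H)$ and $k_i k_j \in E(K_n)$; the latter forces $i \neq j$. Since $G$ is an induced subgraph of $H$ via $\iota$, the first condition implies $v_i v_j \in E(G)$. This completes the verification that $\phi$ realises $G$ as an induced subgraph of $H \times K_n$.

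There is no real obstacle here: the role of the $K_n$-factor is purely to provide a set of $n$ pairwise-adjacent ``tags'' that keep the vertices of $G$ separated in the tensor product while never contributing a spurious edge (since $K_n$ is complete) or blocking a genuine edge of $G$. The proof is just a bookkeeping check of the three defining properties of an induced embedding.
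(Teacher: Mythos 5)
Your proposal is correct and uses exactly the same construction as the paper: tagging each vertex $v_i$ with a distinct vertex of $K_{|G|}$ via $v_i \mapsto (\iota(v_i), i)$ and checking the induced-embedding conditions against \cref{def:tensor_product}. The paper states the verification in one line where you spell out injectivity, edge preservation, and edge reflection explicitly, but the argument is identical.
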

\begin{proof}
    Let $f\colon G \to H$ be a graph morphism that makes $G$ into an induced subgraph of $H.$ Let us index $V(G) =\{v_1, v_2, \ldots, v_{|G|} \}$. Let $\hat{f}\colon G \to H \times K_{|G|}$ be the map defined by 
    \[ \hat{f}(v_i) = (f(v_i), i) .\]
    We can see that $\hat{f}$ is a graph morphism which turns $G$ into an induced subgraph of $H \times K_{|G|}.$
\end{proof}

We are now ready to prove an analog of Erdős-Evans's theorem in the function fields case. 

\begin{prop} \label{prop:induced_unitary}
   Let $G$ be a fixed graph and $q$ a fixed prime power. There exists a positive integer $r$ such that for each $d \geq r$, {there exists} $F_d \in \F_q[x]$ satisfying the following conditions 
   \begin{enumerate}
       \item $\omega(F_d) = d$,
       \item $G$ is an induced subgraph of the unitary Cayley graph $G_{F_d}(\{1 \})$.
   \end{enumerate}

\end{prop}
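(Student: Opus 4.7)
The plan is to proceed in two stages: first establish an Erdős--Evans type embedding for a single well-chosen squarefree $F_0$, then bootstrap this to every $d \geq r := \omega(F_0)$ using the multiplicative structure of unitary Cayley graphs on product rings together with \cref{lem:add_complete_graph}.

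For the first stage, let $n = |V(G)|$ with $V(G) = \{v_1, \ldots, v_n\}$. For each non-edge $\{v_i, v_j\}$ of $G$ I would pick a distinct monic irreducible polynomial $p_{ij} \in \F_q[x]$ of degree at least $\lceil \log_q n \rceil$; this is possible because, for every sufficiently large integer $N$, there are plenty of monic irreducibles of degree $N$ over $\F_q$. Set $F_0 = \prod_{\{v_i,v_j\} \notin E(G)} p_{ij}$, which is squarefree. By the Chinese remainder theorem, $\F_q[x]/F_0 \cong \prod_{ij} \F_q[x]/p_{ij}$, and each factor is a finite field with $q^{\deg p_{ij}} \geq n$ elements. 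For each non-edge $\{v_i, v_j\}$, the graph obtained from $G$ by identifying $v_i$ and $v_j$ has $n-1$ vertices and therefore admits a proper vertex coloring $c_{ij}\colon V(G) \to \F_q[x]/p_{ij}$ with $c_{ij}(v_i)=c_{ij}(v_j)$. Define $a_k \in \F_q[x]/F_0$ via CRT by $a_k \equiv c_{ij}(v_k) \pmod{p_{ij}}$ for every non-edge $\{v_i, v_j\}$. Then $v_k \mapsto a_k$ is injective; moreover $a_k - a_l$ is a unit in $\F_q[x]/F_0$ if and only if $a_k \not\equiv a_l \pmod{p_{ij}}$ for every $p_{ij}$, which by construction happens precisely when $\{v_k, v_l\} \in E(G)$. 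Hence $G$ embeds as an induced subgraph of $G_{F_0}(\{1\})$.

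For the second stage, observe that if $R \cong R_1 \times R_2$, then an element of $R$ is a unit iff each coordinate is a unit in the respective factor, so $G_R(\{1\}) \cong G_{R_1}(\{1\}) \times G_{R_2}(\{1\})$ in the tensor-product sense of \cref{def:tensor_product}. For an irreducible $p \in \F_q[x]$, $\F_q[x]/p$ is a field, so $G_p(\{1\}) \cong K_{q^{\deg p}}$. Now given any $d \geq r$, pick $d - r$ distinct monic irreducible polynomials $h_1, \ldots, h_{d-r}$ in $\F_q[x]$, none dividing $F_0$, each of degree at least $\lceil \log_q n \rceil$, and set $F_d = F_0 \cdot h_1 \cdots h_{d-r}$. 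Then $\omega(F_d) = d$ and, by iterated CRT, $G_{F_d}(\{1\}) \cong G_{F_0}(\{1\}) \times K_{q^{\deg h_1}} \times \cdots \times K_{q^{\deg h_{d-r}}}$. Since $G$ is an induced subgraph of $G_{F_0}(\{1\})$ and each complete-graph factor has at least $n = |V(G)|$ vertices, repeated application of \cref{lem:add_complete_graph} realizes $G$ as an induced subgraph of $G_{F_d}(\{1\})$; for $d = r$ one simply takes $F_d = F_0$.

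The main technical obstacle is the first stage, where one must ensure that each $p_{ij}$ is large enough to accommodate a proper coloring of $G$ together with the identification $c_{ij}(v_i) = c_{ij}(v_j)$. The bound $q^{\deg p_{ij}} \geq n$ handles this comfortably because contracting a non-edge leaves a graph with chromatic number at most $n-1$. The second stage is then a clean induction, and the only bookkeeping is to ensure that the newly added irreducibles are distinct from the prime factors of $F_0$, which is automatic once their common degree is taken large enough.
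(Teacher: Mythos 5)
Your second stage is sound and matches the paper's: the paper likewise writes $G_{F_d}(\{1\})$ as a tensor product of complete graphs coming from large irreducible factors and then pads with \cref{lem:add_complete_graph}. The difference is in the first stage: the paper simply invokes the Erd\H{o}s--Evans theorem over $\Z$ as a black box to get $G$ inside $G_n \cong \prod_{i=1}^r K_{p_i}$ and then transports this into $\prod_{i=1}^r K_{q^{m_i}}$ with $q^{m_i} > n$, whereas you re-run the Erd\H{o}s--Evans construction directly over $\F_q[x]$. That is a legitimate and more self-contained route, but as written it has a genuine gap: the assertion that $v_k \mapsto a_k$ is injective does not follow from your construction. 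By design $a_k \equiv a_l \pmod{p_{kl}}$ whenever $\{v_k,v_l\}$ is a non-edge, and nothing forces $c_{ij}(v_k) \neq c_{ij}(v_l)$ for the \emph{other} non-edges $\{v_i,v_j\}$, since a proper coloring may assign equal colors to non-adjacent vertices. Concretely, if $G$ is the empty graph on two vertices, or $K_n$ minus a single edge, then $F_0$ has a single irreducible factor $p_{12}$ and $a_1 = a_2$ in $\F_q[x]/F_0$, so the map is not injective, hence not an induced embedding. A second degenerate case you do not address is $G = K_n$: there are no non-edges, so $F_0$ is the empty product and $\F_q[x]/F_0$ is the zero ring. (The adjacency analysis itself is fine: edges of $G$ do land on units and non-edges on non-units.)

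Both problems are repairable without changing your architecture: handle $K_n$ separately by taking $F_0$ a single irreducible of degree $m$ with $q^m \geq n$, and in general adjoin one extra irreducible factor $p_0$ with $q^{\deg p_0} \geq n$ and require the residues $a_k \bmod p_0$ to be pairwise distinct. This restores injectivity and preserves adjacency, because for an edge $\{v_k,v_l\}$ the difference $a_k-a_l$ is still a unit modulo every factor (distinct residues in a field differ by a unit), while a non-edge is still killed by $p_{kl}$. With that patch your argument is correct and genuinely more elementary than the paper's, at the cost of using one more irreducible factor; the paper's version avoids the issue entirely by inheriting injectivity from the integer Erd\H{o}s--Evans representation.
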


\begin{proof}
By Erdős and Evans's theorem \cite{erdos1989representations}, there exists a squarefree integer $n \in \N$ such that $G$ is an induced subgraph of the unitary graph $G_n$ on $\Z/n\Z.$ Let $r = \omega(n)$; i.e,  $n=p_1p_2 \ldots p_r$ be the prime factorization of $n$. Then 
\[ G_n \cong \prod_{i=1}^r G_{p_i} \cong \prod_{i=1}^r K_{p_i}.\]
For each $i \in \N$ and $m_i \in \N$, by Galois theory for finite fields, there exists a polynomial $h_i$ of degree $m_i$ such that $h_i$ is irreducible over $\F_q[x].$ We then see that $G_{h_i}(\{1\})$ is isomorphic to the complete graph $K_{q^{m_i}}.$ Let us choose $m_i$ so that $q^{m_i}>n$ for all $i$. For each $d \geq r$, let $F_d = h_1 h_2 \ldots h_d.$ We then have 
\[ G_{F_d}(\{1\}) \cong \prod_{i=1}^d G_{h_i}(\{1\}) \cong \prod_{i=1}^d K_{q^{m_i}}.\]
Since $q^{m_i}>n$, we know that $G_n$ is an induced subgraph of $G_{F_r}(\{1\}).$ By \cref{lem:add_complete_graph} , $G$ is an induced subgraph of $G_{F_d}(\{1\})$ as well. 
\end{proof}

{We obtain the following result which says in particular that every graph is an induced subgraph of a gcd-graph.}
\begin{cor} \label{cor:induced_graph}
    Let $G$ be a fixed graph and $k$ a fixed positive integer. Then,  there exist a polynomial $f$ and a subset $D = \{f_1, f_2, \ldots, f_k\}$ of divisors of $f$ such that $G$ is an induced subgraph of $G_{f}(D).$
\end{cor}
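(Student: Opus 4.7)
The plan is to reduce the case of arbitrary $k$ to the case $k=1$, which is already handled by \cref{prop:induced_unitary}, and then ``pad'' the set $D$ with $k-1$ extra divisors that are invisible to the embedding. Concretely, start by applying \cref{prop:induced_unitary} to obtain a polynomial $F \in \F_q[x]$ and an injection $\phi \colon V(G) \hookrightarrow \F_q[x]/F$ realizing $G$ as an induced subgraph of the unitary Cayley graph $G_{F}(\{1\})$; thus $v,w$ are adjacent in $G$ iff $\gcd(\phi(v)-\phi(w),F)=1$.

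Next, choose an auxiliary monic polynomial $h \in \F_q[x]$ with $\gcd(F,h)=1$ of the form $h = h_1 h_2 \cdots h_\ell$, where the $h_i$ are distinct monic irreducibles each coprime to $F$, and $\ell$ is taken large enough that $h$ has at least $k-1$ proper monic divisors (any $\ell$ with $2^\ell \geq k$ suffices, and infinitely many irreducibles are available in $\F_q[x]$). Set $f = F \cdot h$ and choose $k-1$ distinct proper monic divisors $d_1,\dots,d_{k-1}$ of $h$; then let
\[
D = \{h,\, d_1,\, d_2,\, \dots,\, d_{k-1}\} \subset \Div(f).
\]
Define the candidate embedding $\psi \colon V(G) \to \F_q[x]/f$ via the CRT isomorphism $\F_q[x]/f \cong \F_q[x]/F \times \F_q[x]/h$ by $\psi(v) = (\phi(v), 0)$.

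To verify this works, use the standard CRT formula: for $a \in \F_q[x]/f$ corresponding to $(a_1,a_2)$, one has $\gcd(a,f) = \gcd(a_1,F) \cdot \gcd(a_2,h)$. Applied to $\psi(v)-\psi(w) = (\phi(v)-\phi(w),\,0)$, this gives
\[
\gcd(\psi(v)-\psi(w),\,f) \;=\; \gcd(\phi(v)-\phi(w),\,F) \cdot h.
\]
When $v \neq w$ this lies in $D$ iff it equals $h$ (since each $d_i$ is a proper divisor of $h$ and hence is \emph{not} divisible by $h$, so cannot equal $\gcd(\phi(v)-\phi(w),F)\cdot h$), iff $\gcd(\phi(v)-\phi(w),F)=1$, iff $v$ and $w$ are adjacent in $G$. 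Finally the injectivity of $\psi$ follows from that of $\phi$. This shows $G$ is an induced subgraph of $G_f(D)$.

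The only conceptual subtlety, and the step one must be careful with, is the choice of $D$: the padding divisors must be selected so that they are \emph{unreachable} as gcds of differences $\psi(v)-\psi(w)$. Taking them to be proper divisors of $h$ achieves exactly this, because every nonzero difference has gcd divisible by $h$. The routine checks (that $\omega(h)$ can be made arbitrarily large, that enough irreducibles coprime to $F$ exist, and that the CRT gcd formula applies) are all straightforward.
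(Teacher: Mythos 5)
Your proof is correct and follows essentially the same route as the paper: reduce to \cref{prop:induced_unitary}, build $f$ as a product with an auxiliary coprime polynomial, embed $G$ into the ideal generated by one distinguished element of $D$ (on which the induced subgraph is a unitary Cayley graph, exactly the content of \cref{lem:induced_graph_ideal}), and pad $D$ with $k-1$ divisors that can never arise as $\gcd$'s of differences inside that ideal. If anything, your explicit CRT verification is the careful version of the paper's one-line appeal to \cref{lem:induced_graph_ideal}: as written the paper takes $D=\{h_1,\dots,h_k\}$ and the ideal generated by $h_0$, for which that lemma yields the empty divisor set, whereas your insistence that the generator of the ideal itself belong to $D$ (with the remaining $k-1$ elements not divisible by it) is precisely what makes the argument go through.
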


\begin{proof}
    Let $h_0$ be an arbitrary polynomial in $\F_q[x].$ By \cref{prop:induced_unitary}, there exists a positive integer $d \geq k$ and a polynomial $h$ of the form $h = h_1 h_2  \ldots h_{d}$ such that $G$ is an induced subgraph of $G_{h}(\{1\}).$ Let us choose $f = h_0 h$ and   
    \[ \{f_1, f_2, \ldots, f_k \} = \{h_1, h_2, \ldots, h_k \}. \]
    Let $I$ be the ideal generated by $h_0$ in $\F_q[x]/(f).$ By \cref{lem:induced_graph_ideal} the induced graph on $I$ is naturally isomorphic to the unitary Cayley graph $G_{h}(\{1 \})$. This shows that $G$ is an induced subgraph of $G_{f}(D).$
\end{proof}

One may wonder whether the following stronger form of \cref{cor:induced_graph} holds. 
    Let $G$ be a fixed graph and $f_1, f_2, \ldots, f_k$ fixed polynomials. Does there exist a polynomial $f$ such that 
    \begin{enumerate}
        \item $f_i\mid f$ for all $1 \leq i \leq k$,
        \item $G$ is an induced subgraph of $G_{f}(D)$ where $D = \{f_1, f_2, \ldots, f_k\}$?
    \end{enumerate}
In general, the answer is no. There seem to be some subtle constraints. We discuss here a particular one. Let $g = \lcm(f_1, f_2, \ldots, f_k)$ and assume further that $g \neq f_i$ for all $1 \leq i \leq k$. If such $f$ exists, then $g \mid f$ and there is a canonical map 
\[ \Phi\colon \F_q[x]/(f) \to \F_q[x]/(g).\]

If we take a subset $S \subset \F_q[x]/(f)$ such that $|S|>|\F_q[x]/(g)|$, then there exists $a,b \in S$ such that $a \neq b$ and $\Phi(a)=\Phi(b).$ Consequently, $\Phi(a-b)=0$ or $g\mid a-b.$ By definition, $(a,b) \not \in E(G_{f}(D)).$ Consequently, we have the following upper bound for the clique number of $G_{f}(D)$ 

\[ \omega(G_{f}(D)) \leq |\F_q[x]/(g)|.\]
This shows that if $\omega(G) > |\F_q[x]/(g)| $ then $G$ cannot be an induced subgraph of $G_{f}(D).$

We can overcome the constraint by enlarging the base field $\F_q.$ 

\begin{prop}
    Let $G$ be a fixed graph. Let $f_1, f_2, \ldots, f_k$ be fixed polynomials in $\F_q[x]$. Then, there exist a finite extension $\F_{q^m}$ of $\F_q$ and a polynomial $f \in \F_{q^m}[x]$  such that 
    \begin{enumerate}
        \item $f_i \mid f$ for all $1 \leq i \leq k$,
        \item $G$ is an induced subgraph of $G_{f}(D)$ where $D = \{f_1, f_2, \ldots, f_k\}$ and we consider $f$ as an element of $\F_{q^m}[x].$
    \end{enumerate}
\end{prop}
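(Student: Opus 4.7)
The approach is to use the Chinese Remainder Theorem to decompose $\F_{q^m}[x]/f$ as a product of two smaller quotient rings, exhibiting $G_f(D)$ as a tensor product of a smaller gcd-graph with a unitary Cayley graph, and then combine a clique in the first factor with an induced copy of $G$ in the second.

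Set $g = \lcm(f_1, \ldots, f_k) \in \F_q[x]$. By \cref{prop:induced_unitary} there exists $h \in \F_q[x]$ such that $G$ is an induced subgraph of $G_h(\{1\})$; inspecting its proof shows that the irreducible factors of $h$ may be chosen freely among irreducibles of sufficiently large degree, so one may further arrange $\gcd(g,h) = 1$ by avoiding the finitely many irreducible factors of $g$. Because greatest common divisors of polynomials with coefficients in $\F_q$ are unchanged when viewed over any extension, the induced embedding $\psi\colon V(G) \hookrightarrow \F_q[x]/h$ remains an induced embedding of $G$ into $G_h(\{1\})$ over any $\F_{q^m}$. Choose $m$ so that $q^m \geq |V(G)|$ and put $f = gh \in \F_{q^m}[x]$; then $f_i \mid g \mid f$ for every $i$.

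The CRT isomorphism $\F_{q^m}[x]/f \cong \F_{q^m}[x]/g \times \F_{q^m}[x]/h$ gives $\gcd((s_1,s_2), f) = \gcd(s_1, g)\cdot \gcd(s_2, h)$, and since every $f_i$ divides $g$ it is coprime to $h$; therefore this product lies in $D$ exactly when $\gcd(s_1, g) \in D$ and $\gcd(s_2, h) = 1$. Hence $G_f(D)$ is the tensor product of Cayley graphs $G_g(D) \times G_h(\{1\})$. Now fix any $f_{i_0} \in D$ and distinct scalars $a_1, \ldots, a_N \in \F_{q^m}$, where $N = |V(G)|$, and define
\[
\phi(v_j) = \bigl(a_j f_{i_0},\, \psi(v_j)\bigr) \in \F_{q^m}[x]/g \times \F_{q^m}[x]/h.
\]
For $v_j \neq v_{j'}$, the scalar $a_j - a_{j'}$ is a unit, so $\gcd((a_j-a_{j'})f_{i_0}, g) = f_{i_0}$ (this identity remains valid when $f_{i_0} = g$, both sides being $g$); consequently the adjacency of $\phi(v_j)$ and $\phi(v_{j'})$ in $G_f(D)$ reduces to whether $\gcd(\psi(v_j) - \psi(v_{j'}), h) = 1$, that is, whether $v_j$ and $v_{j'}$ are adjacent in $G_h(\{1\})$, which by construction is equivalent to adjacency in $G$. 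Injectivity of $\phi$ follows from distinctness of the $a_j$'s when $f_{i_0} \neq g$ and from injectivity of $\psi$ when $f_{i_0} = g$ (in which case the first coordinate is identically zero).

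The main obstacle is exactly the one flagged in the paragraph just before the proposition: over $\F_q$ the diagonal clique $\{a f_{i_0} : a \in \F_q\}$ in $G_g(D)$ has only $q$ elements, which can be much smaller than $|V(G)|$. Enlarging the ground field to $\F_{q^m}$ inflates this clique to size $q^m$ without affecting the other ingredients of the construction, and the clique-plus-induced-embedding trick in the tensor product $G_g(D) \times G_h(\{1\})$ then delivers the desired induced embedding of $G$ into $G_f(D)$.
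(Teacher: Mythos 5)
Your proof is correct. It shares the paper's overall skeleton --- the same $f=gh$ with $g=\lcm(f_1,\ldots,f_k)$ and $h$ a product of large-degree irreducibles coprime to $g$ supplied by \cref{prop:induced_unitary}, and the same role for the field extension (making $q^m$ large enough) --- but the final embedding step is carried out by a genuinely different mechanism. The paper passes to the ideal generated by $f_1$, identifies the induced subgraph there with a unitary Cayley graph via \cref{lem:induced_graph_ideal}, and then restricts further to the subring $\F_{q^m}^t$ to land in a tensor product of complete graphs, finishing with \cref{lem:add_complete_graph}. You instead factor $G_f(D)\cong G_g(D)\times G_h(\{1\})$ by the Chinese remainder theorem (using that every $f_i$ divides $g$ and is coprime to $h$ to split the gcd condition) and embed $G$ along the scalar clique $\{a f_{i_0} : a\in\F_{q^m}\}$ in the first factor crossed with the induced copy of $G$ in the second. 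What your route buys: the computation $\gcd((a_j-a_{j'})f_{i_0},g)=f_{i_0}\in D$ makes the first coordinate's contribution to adjacency automatic for every pair, so the argument is insensitive to divisibility relations among the $f_i$; by contrast, the paper's identification of the induced subgraph on the ideal $(f_1)$ with the \emph{unitary} Cayley graph $G_{f/f_1}(\{1\})$ is only literally what \cref{lem:induced_graph_ideal} gives when no other $f_j$ is a multiple of $f_1$. One cosmetic point: if some $f_i$ equals $g$, the connection set of the factor $G_g(D)$ contains $0$, so the tensor-product isomorphism should be read at the level of Cayley connection sets; your direct gcd computation for the pairs $\phi(v_j)$, $\phi(v_{j'})$ with $j\neq j'$ already sidesteps this, as you note when treating the case $f_{i_0}=g$.
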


\begin{proof}
By Erdős and Evans's theorem, there exists a squarefree number $n$ such that $G$ is an induced subgraph of the unitary Cayley graph $G_n.$ Suppose that $r = \omega(n)$ and $n=p_1 p_2 \ldots p_r$ be the prime factorization of $n.$ Then, as explained in the proof of \cref{prop:induced_unitary}, $G_n \cong \prod_{i=1}^r K_{p_i}.$ Our goal is to show that we can find $\F_{q^m}$ and $f$ such that $G_n$ is an induced subgraph of $G_{f}(D)$ where we consider $f$ as an element in $\F_{q^m}[x].$

Let $m$ be a positive integer such that $q^m>n$ and $g = \lcm(f_1, f_2, \ldots, f_k) \in \F_q[x].$ By Galois theory, there exists a polynomial $h \in \F_q[x]$ with at least $r$ distinct irreducible factors and $\gcd(h,g)=1.$ Let $f=hg$. Let $I$ be the ideal in $\F_{q^m}[x]/(f)$ generated by $f_1.$ Then, by \cref{lem:induced_graph_ideal}, the induced subgraph on $I$ is isomorphic to the unitary Cayley graph $G_{\bar{f}}(\{1\})$ where $\bar{f} = f/f_1.$ Suppose that $\bar{f} = g_1^{a_1} g_2^{a_2} \ldots g_t^{a_t}$ be the factorization of $f/f_1$ over $\F_{q^m}[x].$ Then, by the choice of $f$, $t \geq r.$ By the Chinese remainder theorem, we know that $\F_{q^m}^t$ is a subring of $\F_{q^m}[x]/\bar{f}.$ Therefore, $G_{\F_{q^m}^t} = \prod_{i=1}^t K_{q^{m}}$ is an induced subgraph of $G_{\bar{f}}(\{1\})$. Since $t \geq r$ and $q^m \geq n$, by \cref{lem:add_complete_graph} $G_n$ is an induced subgraph of  $G_{\F_{q^m}^t}$. Consequently, $G_n$ is an induced subgraph of $G_{f}(D)$ as well. 
\end{proof}

\section*{Acknowledgements}
We thank Professor Torsten Sander for the enlightening correspondence about gcd-graphs and related arithmetic graphs. We are grateful to Professor Emmanuel Kolwaski for some helpful comments on Fekete polynomials and for writing the book \cite{kowalski2018exponential}.  We thank Professor Ki-Bong Nam for his interest in this topic and his feedback on the first draft. Finally, we thank Minh-Tam Trinh for his discussions, encouragement, and support at the initial stage of this work. {We are also grateful to the referee for their comments and valuable suggestions which we have used to improve our exposition.}


\end{document}